\numberwithin{equation}{section}
\def\cocoa{{\hbox{\rm C\kern-.13em o\kern-.07em C\kern-.13em o\kern-.15em A}}}
\newtheorem{theorem}{Theorem}[section]
\newtheorem{question}[theorem]{Question}
\newtheorem{lemma}[theorem]{Lemma}
\newtheorem{proposition}[theorem]{Proposition}
\newtheorem{corollary}[theorem]{Corollary}
\theoremstyle{definition}
\newtheorem{remark}[theorem]{Remark}
\newtheorem{definition}[theorem]{Definition}
\newtheorem{example}[theorem]{Example}
\newtheorem{construction}[theorem]{Construction}
\newcommand {\sHom}{\mathcal{H}\kern -0.25ex{\mathit om}}
\newcommand {\sExt}{\mathcal{E}\kern -0.25ex{\mathit xt}}
\newcommand {\sTor}{\mathcal{T}\kern -0.25ex{\mathit or}}
\newcommand {\im}{\mathrm{im}}
\newcommand {\rk}{\mathrm{rk}}
\newcommand {\Ext}{\mathrm{Ext}}
\newcommand {\Hom}{\mathrm{Hom}}
\newcommand {\Hilb}{\mathcal{H}\kern -0.25ex{\mathit ilb\/}}
\newcommand {\Mov}{\overline{\mathrm{Mov}}}
\newcommand {\cK}{\mathcal{K}}
\newcommand {\cA}{\mathcal{A}}
\newcommand {\cU}{\mathcal{U}}
\newcommand {\cV}{\mathcal{V}}
\newcommand {\cP}{\mathcal{P}}
\newcommand {\cQ}{\mathcal{Q}}
\newcommand{\cC}{{\mathcal C}}
\newcommand{\cS}{{\mathcal S}}
\newcommand{\cE}{{\mathcal E}}
\newcommand{\cF}{{\mathcal F}}
\newcommand{\cM}{{\mathcal M}}
\newcommand{\cN}{{\mathcal N}}
\newcommand{\cO}{{\mathcal O}}
\newcommand{\cG}{{\mathcal G}}
\newcommand{\cI}{{\mathcal I}}
\newcommand {\bZ}{\mathbb{Z}}
\newcommand {\bC}{\mathbb{C}}
\newcommand {\bP}{\mathbb{P}}
\newcommand{\Bl}{\operatorname{Bl}}
\newcommand{\Pic}{\operatorname{Pic}}
\def\p#1{{\bP^{#1}}}
\def\mapright#1{\mathbin{\smash{\mathop{\longrightarrow}
\limits^{#1}}}}
\title[Instanton bundles]{Instanton bundles on the blow up of\\ the projective $3$--space at a point}
\thanks{The first and fourth authors are members of GNSAGA group of INdAM, are supported by the framework of PRIN 2015 \lq Geometry of Algebraic Varieties\rq, cofinanced by MIUR and  by MIUR grant Dipartimenti di Eccellenza 2018-2022 (E11G18000350001). The second author was supported by TUBITAK project 114F116. }
\subjclass[2010]{Primary: 14J60. Secondary: 14J45, 14D21, 14F05.}
\keywords{Fano threefold, del Pezzo threefold, vector bundle, $\mu$--(semi)stable bundle, instanton bundle}
\author[G. Casnati, E. Coskun, O. Genc, F. Malaspina]{G. Casnati, E. Coskun, O. Genc, F. Malaspina }
\begin{document}

\begin{abstract}
We propose a general definition of mathematical instanton bundle with given charge on any Fano threefold extending the classical definitions on $\p3$ and on Fano threefold with cyclic Picard group. Then we deal with the case of the blow up of $\p3$ at a point, giving an explicit construction of instanton bundles satisfying some important extra properties: moreover, we also show that they correspond to smooth points of a component of the moduli space.
\end{abstract}

\maketitle

\section{Introduction}
Let $\p N$ be the projective space of dimension $N$ over the complex field $\bC$. We recall that a smooth irreducible closed subscheme $X\subseteq \p N$ of dimension $3$ is called a {\sl Fano threefold} if the dual of its canonical bundle $\omega_X$ is ample: we refer the interested reader to \cite{I--P} for the results about Fano threefold mentioned in what follows. 

The greatest positive integer $i_X$ such that $\omega_X\cong\cO_X(-i_Xh)$ for some ample line bundle $\cO_X(h)\in\Pic(X)$ is called the {\sl index} of $X$ and $\cO_X(h)$ the {\sl fundamental line bundle of $X$}. One has  $1\le i_X\le 4$ and $i_X=4,3$ if and only if $X$ is isomorphic to either $\p3$, or a smooth quadric in $\p4$, respectively. Up to isomorphism there exist exactly $8$ deformation families of Fano threefolds with $i_X=2$ and $95$ deformation families of Fano threefolds with $i_X=1$ (see \cite[Chapter 20]{I--P}).

The study of vector bundles on Fano threefolds is an important research field, even in the very first cases of the projective space and of the smooth quadric. A particularly important family of rank $2$ vector bundles on Fano threefolds is related to physics. In order to introduce such a family we fix some notation and recall some facts below. 

Let $X$ be any threefold endowed with an ample line bundle $\cO_X(H)$. Recall that for each sheaf $\cF$ on $X$ its {\sl slope} and {\sl reduced Hilbert polynomial} with respect to $\cO_X(H)$ are
$$
\mu(\cF)= c_1(\cF)H^2/\rk(\cF), \qquad p_{\cF}(t)=\chi(\cF(tH))/\rk(\cF),
$$
respectively.

We say that the bundle $\cF$ is {\sl $\mu$--stable} (resp. {\sl $\mu$--semistable}) with respect to $\cO_X(H)$ if  $\mu(\mathcal G) < \mu(\cF)$ (resp. $\mu(\mathcal G) \le \mu(\cF)$) for each subsheaf $\mathcal G$ with $0<\rk(\mathcal G)<\rk(\cF)$. The bundle $\cF$ is called {\sl stable} (resp. {\sl semistable})  with respect to $\cO_X(H)$ if $p_{\mathcal G}(t)<  p_{\cF}(t)$ (resp. $p_{\mathcal G}(t)\le  p_{\cF}(t)$)  for each proper subsheaf $\mathcal G$. We have the following chain of implications for $\cF$:
$$
\text{$\cF$ is $\mu$--stable}\ \Rightarrow\ \text{$\cF$ is stable}\ \Rightarrow\ \text{$\cF$ is semistable}\ \Rightarrow\ \text{$\cF$ is $\mu$--semistable.}
$$

It is easy to see that a vector bundle  $\cF$ is ($\mu$--semi)stable with respect to $\cO_X(H)$ if and only if it is ($\mu$--semi)stable with respect to $\cO_X(\lambda H)$ for each positive integer $\lambda$.
There exists a quasi--projective scheme $\cM_X(r;c_1,c_2)$ parameterizing semistable vector bundles on $X$ with respect to $\cO_X(H)$ with fixed rank $r$ and Chern classes $c_1,c_2$. 

Now let $X$ be a Fano threefold of index $i_X$, define
$$
q_X:=\left[\frac {i_X}2\right]
$$
and assume that the Picard number $\varrho_X$ of $X$ is $1$. In \cite{Fa} (see also \cite{Kuz} in the case $i_X=2$: see also \cite{Sa} for a more detailed analysis when $\deg(X)=5$), the author calls instanton bundle on such a Fano threefold $X$ (sometimes also called mathematical instanton bundle) every stable rank $2$ vector bundle $\cE$ such that $c_1(\cE)=(2q_X-i_X)h$ and $h^1\big(X,\cE(-q_Xh)\big)=0$. Such a definition coincides with the classical one when $X=\p3$: see \cite{O--S--S} and the references therein. In \cite{Fa, Kuz}, several new examples are described, showing the existence of instanton bundles on some Fano threefolds $X$ with $\varrho_X=1$. 

In \cite{M--M--PL} the authors slightly weakened the above definition following the classical papers \cite{Don, Buch}, in order to deal with bundles on a Fano variety with Picard number $\varrho_X\ge2$. More precisely, let $X$ be the {\sl flag threefold}, i.e. the general hyperplane section $X$ of the Segre embedding $\p2\times\p2\subseteq\p8$: notice that $i_X=2$ in this case. 

The authors call instanton bundle  every $\mu$--semistable rank $2$ bundle $\cE$ on $X$ such that $c_1(\cE)=0$ and $h^0\big(X,\cE\big)=h^1\big(X,\cE(-h)\big)=0$, proving their existence and classifying the strictly $\mu$--semistable ones for each admissible value of $c_2(\cE)$. Such a classification allows them to prove that the notion of $\mu$--semistability is actually equivalent to the semistability one for instanton bundles on the flag threefold $X$.

A similar definition is also adopted in \cite{An}, where instanton bundles on the Segre embedding $\p1\times\p1\times\p1\subseteq\p7$ are described.

In the present paper, partially motivated by the above discussion, we adopt the latter definition of instanton bundle, extending it to any Fano threefold regardless of its Picard number. 

\begin{definition}
\label{dInstanton}
Let $X$ be a Fano threefold.

A vector bundle $\cE$ of rank $2$ on $X$ is called an instanton bundle if the following properties hold:
\begin{itemize}
\item $c_1(\cE)=(2q_X-i_X)h$;
\item $\cE$ is $\mu$--semistable and $h^0\big(X,\cE\big)=0$;
\item $h^1\big(X,\cE(-q_Xh)\big)=0$;
\end{itemize}
\end{definition}

Following the usual terminology, if $\cE$ is an instanton bundle on a Fano threefold $X$, then the class $c_2(\cE)$ will be called the {\sl charge} of $\cE$.
Moreover, if $\cE$ is an instanton bundle we will refer to the vanishing of $h^1\big(X,\cE(-q_Xh)\big)$ as {\sl the instantonic condition}.

We now turn our attention to two other interesting properties of instanton bundles related to their semistability. Notice that $\mu$--stability implies both $\mu$--semistability and $h^0\big(X,\cE\big)=0$, and it is actually equivalent to the latter vanishing if $i_X$ is even and $\varrho_X=1$.

We first notice that if $\varrho_X=1$, then for each $d\ge1$, the line bundle $\cO_{X}(dh)$ is very ample, hence $\cE\otimes\cO_D$ is $\mu$--semistable for each general  $D\subseteq X$ with $D\in\vert dh\vert$, thanks to \cite[Theorem 3.1]{Ma}. In particular we certainly have $h^0\big(D,\cE(-q_Xh)\otimes\cO_D\big)=0$. The cohomology of the exact sequence
\begin{equation}
\label{seqRestriction}
0\longrightarrow \cO_{X}(-D)\longrightarrow \cO_{X}\longrightarrow \cO_D\longrightarrow 0
\end{equation}
tensored by $\cE(-q_Xh)$ gives $h^1\big({X},\cE(-q_Xh-D)\big)=h^0\big(D,\cE(-q_Xh)\otimes\cO_D\big)=0$. Hence, a $\mu$--stable rank $2$ bundle $\cE$ with $c_1(\cE)=(2q_X-i_X)h$ on $X$ is an instanton bundle  if and only if  $h^1\big(X,\cE(-ih)\big)=0$ for each $i\ge q_X$. In Section \ref{sInstanton} we will show that one can still prove similarly that $h^1\big(X,\cE(-h-D)\big)=0$ for each effective smooth divisor $D$ on the flag threefold $X$.

As a second interesting property we recall that a subscheme $L\subseteq X$ is called a line if $L\cong\p1$ and $Lh=1$: we denote by $\Lambda$ the Hilbert scheme of lines on $X$. The restriction of an instanton bundle on $\p3$ to each general line $L$ is the trivial bundle $\cO_{\p1}^{\oplus2}$, thanks to the Grauert--M\"ulich theorem (see \cite[Corollary 2 of Theorem II.2.1.4]{O--S--S}). Similarly, for $i_X=3$ the restriction is $\cO_{\p1}{\oplus}\cO_{\p1}(-1)$: see \cite[Proposition 5.3]{C--F}.  

The problem of understanding how the restriction of an instanton bundle to general hyperplanes and lines behaves seems to be of particular interest. 
For this reason we give the following further definitions. 

\begin{definition}
\label{dEarnest}
Let $\cE$ be an instanton bundle on a Fano threefold $X$.
\begin{itemize}
\item We say that $\cE$ is generically trivial (resp. on the component $\Lambda_0\subseteq\Lambda$) if $h^1\big(L,\cE((i_X-2q_X-1)h)\otimes\cO_L\big)=0$ when $L\in\Lambda$ (resp. on $\Lambda_0$) is general.
\item We say that $\cE$ is  earnest if $h^1\big(X,\cE(-q_Xh-D)\big)=0$ when $\vert D\vert\ne\emptyset$ contains smooth integral elements.
\end{itemize}
\end{definition}

Notice that $h^1\big(L,\cE((i_X-2q_X-1)h)\otimes\cO_L\big)=0$ if and only if either $\cE\otimes\cO_L\cong\cO_{\p1}^{\oplus2}$ (when $i_X$ is even), or $\cE\otimes\cO_L\cong\cO_{\p1}(-1)\oplus\cO_{\p1}$ (when $i_X$ is odd). The locus of lines $L\in\Lambda$ with such a splitting property is open by semicontinuity. Thus $\cE$ is generically trivial on a component $\Lambda_0$ if and only if there is at least one $L\in\Lambda_0$ with such a splitting property.

As we pointed out above each instanton bundle is generically trivial if $i_X\ge3$. When $i_X\le2$ the problem of the generic triviality of each instanton bundle is open. Indeed, in this case, it seems difficult to extend the proof of the aforementioned Grauert--M\"ulich theorem, because the natural map $\mathcal L\to X$ from the universal line $\mathcal L\to\Lambda$ has no more connected fibres (see \cite[Section 3.7 and Conjecture 3.16]{Kuz}). 

Let $E$ be a  {\sl plane}, i.e. a divisor $E\subseteq X$ such that $E\cong\p2$ and $Eh^2=1$. The universal property of the Hilbert scheme yields the existence of a morphism $\epsilon\colon E^\vee\cong\p2\to\Lambda$ whose image contains all the points corresponding to lines in $E$. Thus there exists a component $\Lambda_E\subseteq\Lambda$ containing $\im(\epsilon)$. In Section \ref{sGeneral} we will show that the vanishing $h^1\big(X,\cE(-q_Xh-E)\big)=0$  is related to the generic triviality on the component $\Lambda_E$. 

In Section \ref{sInstanton} we also show that the above definitions lead to the well known classical examples. 
Notice that in these classical examples, essentially thanks to the $\mu$--semistability hypothesis, one obtains that $c_2(\cE)\in\Mov(X)$, i.e. it satisfies $c_2(\cE)D\ge0$ for each element $D$ of the pseudo--effective cone of $X$  (for further details on $\Mov(X)$ see \cite[Section 11,4.C]{Laz2}): in particular such an inequality holds for every effective divisors on $X$.

In Section \ref{sGeneral} we show that an instanton bundle in the sense of the above definition has some interesting properties. E.g.
\begin{equation*}
c_2(\cE) h\ge\left\lbrace\begin{array}{ll} 
1\quad&\text{if $i_X=4$,}\\
2\quad&\text{if $i_X=2,3$,}\\
\frac{\deg(X)}4\quad&\text{if $i_X=1$.}
\end{array}\right.
\end{equation*}
Moreover, if $2\le i_X\le 3$ and the fundamental line bundle gives an embedding $X\subseteq\p N$, then instanton bundles $\cE$ of minimal charge are {\sl Ulrich bundles} up to twisting them by $\cO_X(h)$, i.e. 
\begin{gather*}
h^i\big(X,\cE((1-i)h)\big)=0,\qquad i>0,\\
h^j\big(X,\cE(-jh)\big)=0,\qquad j<3.
\end{gather*}

In Section \ref{sBlow} we begin to deal with the case of the Fano threefold $F\subseteq \p8$ of degree $7$, i.e. the blow up $F:=\Bl_P\p3$ of $\p3$ at a point $P$. We have a natural isomorphism $F\cong\bP(\cP)\mapright\pi\p2$, where $\cP:=\cO_{\p2}\oplus\cO_{\p2}(1)$. We denote by  $\xi$ and $f$ the classes in $A(F)$ of the line bundles $\cO_{\bP(\cP)}(1)$ and $\pi^*\cO_{\p2}(1)$ respectively. Thus we have an isomorphism
$$
A(F)\cong\bZ[\xi,f]/(f^3,\xi^2-\xi f),
$$
so that $\xi^3=\xi^2 f=\xi f^2$ are the classes of a point. The embedding $F\subseteq\p8$ is induced by the linear system $\cO_F(h)=\cO_F(\xi+f)$ and $\omega_F\cong\cO_F(-2h)$: in particular $i_F=2$ and its fundamental line bundle corresponds to the quadrics through the blown up point $P$. In what follows we will denote the exceptional divisor of the blow up by $E$.

If $\cE$ is an instanton bundle on $F$ with $c_2(\cE)=\alpha\xi^2+\beta f^2$, in the same section  we also construct a monad using particular full exceptional collections. We also use such a monad for proving some cohomological properties of instanton bundles on $F$. More precisely, we define the sheaves
\begin{gather*}
\cC^{-1}:=\cO_F(-f)^{\oplus\beta+\gamma}\oplus\pi^*\Omega_{F}^1(f-\xi)^{\oplus\alpha},\\
\cC^0:=\cO_F(-f)^{\oplus\gamma}\oplus\pi^*\Omega_{\p2}^1(f)^{\oplus\alpha+\beta}\oplus\cO_F(-\xi)^{\oplus2\alpha},\\
\cC^1:=\cO_F^{\oplus2\alpha+\beta-2},
\end{gather*}
Then the following theorem holds.

\begin{theorem}
\label{tSimplify}
Let $\cE$ be an instanton bundle with $c_2(\cE)=\alpha\xi^2+\beta f^2$ on $F$.

Then $\cE$ is the cohomology of a monad $\cC^\bullet$ of the form
\begin{equation*}
\label{Monad}
0\longrightarrow \cC^{-1}\longrightarrow \cC^0\longrightarrow\cC^1\longrightarrow0
\end{equation*}
where $\gamma:=h^1\big(F,\cE(-2\xi)\big)\le\alpha$.

Conversely, if the cohomology $\cE$ of the monad $\cC^\bullet$ is a $\mu$--semistable bundle for some integers $\alpha,\beta,\gamma$, then $\cE$ is an instanton bundle with charge $\alpha\xi^2+\beta f^2$ on $F$ such that
\begin{enumerate}
\item $h^1\big(F,\cE(-h-E)\big)\le\gamma$;
\item $h^1\big(F,\cE(-h-D)\big)=0$ for each smooth divisor $D\ne E$.
\end{enumerate}
\end{theorem}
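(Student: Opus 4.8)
The plan is to realize $\cE$ as the cohomology of a Beilinson-type monad attached to a full exceptional collection on $F$ adapted to the projective bundle structure $\pi\colon F\cong\bP(\cP)\to\p2$. Concretely I would take the collection obtained as the ``box product'' of the pullback of a Beilinson-type collection on the base $\p2$ (involving $\cO_{\p2}$, $\cO_{\p2}(-1)$ and $\Omega^1_{\p2}(1)$) with the relative exceptional pair $\{\cO_F,\cO_F(-\xi)\}$ coming from the $\bP^1$-fibration, a collection of length six. The associated resolution of the diagonal yields, applied to $\cE$, a spectral sequence whose first page is built from cohomology groups $H^q\big(F,\cE\otimes\cG_p\big)$, the $\cG_p$ running over the left dual collection, tensored with the collection members themselves; it converges to $\cE$ in total degree $0$ and to $0$ otherwise. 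The building blocks $\cO_F(-f)$, $\cO_F(-\xi)$, $\pi^*\Omega^1_{\p2}(f)$ and $\cO_F$ appearing in $\cC^{-1},\cC^0,\cC^1$ are exactly these members, so the entire content of the forward direction is that the instanton hypotheses force the page to collapse onto three consecutive columns with the stated multiplicities.

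For the forward direction I would first translate the instanton conditions into vanishings. Since $i_F=2$ and $q_F=1$ one has $c_1(\cE)=0$, and the hypotheses read $h^0\big(F,\cE\big)=0$, $h^1\big(F,\cE(-h)\big)=0$, together with $\mu$-semistability and its Serre-dual consequences (using $\omega_F\cong\cO_F(-2h)$), such as the vanishing of $h^3\big(F,\cE(-h)\big)$ and of $h^0$ of all sufficiently negative twists. I would then compute, member by member, the groups $H^q\big(F,\cE\otimes\cG_p\big)$, reducing each to the basic instanton vanishings by means of the relative and absolute Euler sequences on $F$ and the Leray spectral sequence for $\pi$. Most vanish; the survivors line up along three anti-diagonals, and counting their dimensions via Riemann--Roch on $F$ with $c_2(\cE)=\alpha\xi^2+\beta f^2$ reproduces the multiplicities $\beta+\gamma$, $\alpha$, $\gamma$, $\alpha+\beta$, $2\alpha$ and $2\alpha+\beta-2$, where $\gamma:=h^1\big(F,\cE(-2\xi)\big)$ is the single dimension not forced to vanish (its appearance in both $\cC^{-1}$ and $\cC^0$ reflecting two nonzero cohomology groups of the member dual to $\cO_F(-f)$). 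Finally I would verify that the collapsed complex is a genuine monad: exactness at $\cC^{-1}$ and surjectivity at $\cC^1$ follow from the degeneration, and the bound $\gamma\le\alpha$ would be extracted from the injectivity of $\cC^{-1}\to\cC^0$ restricted to the relevant summands, equivalently from a direct comparison of $\gamma$ with the rank of a neighbouring term.

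For the converse direction $\mu$-semistability is assumed, so I would only need to recover the remaining data. Computing the alternating sum of Chern characters of $\cC^{-1},\cC^0,\cC^1$ in $A(F)\cong\bZ[\xi,f]/(f^3,\xi^2-\xi f)$ gives $\rk(\cE)=2$, $c_1(\cE)=0$ and $c_2(\cE)=\alpha\xi^2+\beta f^2$, using $f^3=0$ and $\xi^2=\xi f$ throughout; this is routine but essential bookkeeping. The vanishings $h^0\big(F,\cE\big)=0$ and $h^1\big(F,\cE(-h)\big)=0$ then follow from the hypercohomology spectral sequence of the monad and the known cohomology of the building blocks, so $\cE$ is an instanton bundle of the asserted charge. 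For (1) and (2) I would twist the whole monad by $\cO_F(-h-D)$ and read off $h^1\big(F,\cE(-h-D)\big)$ from the induced long exact sequences: for the exceptional divisor $E$ the twisted building blocks acquire exactly one unit of first cohomology per summand of type $\cO_F(-f)^{\oplus\gamma}$, giving the bound $\le\gamma$, whereas for a smooth integral $D\ne E$ the corresponding groups vanish outright, yielding $h^1\big(F,\cE(-h-D)\big)=0$. The restriction sequence \eqref{seqRestriction} tensored by $\cE(-h)$ is the natural tool for comparing the two cases.

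The main obstacle I anticipate is the cohomological bookkeeping at the heart of the forward direction: showing that the Beilinson page collapses to exactly three columns with the displayed multiplicities requires computing $H^q\big(F,\cE\otimes\cG_p\big)$ for every member of a length-six collection and every $q$, and then checking that the only unconstrained dimension is $\gamma=h^1\big(F,\cE(-2\xi)\big)$. Isolating $\gamma$ and proving $\gamma\le\alpha$ is the most delicate point, since that term is not pinned down by the instanton vanishings alone and must instead be controlled through the monad structure. In the converse direction the subtlety is concentrated in separating the behaviour of the exceptional divisor $E$ from that of a general smooth $D$, which is precisely what produces the dichotomy between the bound in (1) and the vanishing in (2).
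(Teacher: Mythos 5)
Your overall strategy is the paper's own: Lemma \ref{lAO} is exactly the Beilinson--Ancona--Ottaviani device for the projective bundle $F\cong\bP(\cP)$, your member-by-member cohomology computation is Proposition \ref{pTable}, and the converse is handled in the paper precisely by twisting the two display sequences of the monad and using the resulting bound
$$
h^i\big(F,\cE\otimes\mathcal L\big)\le \sum_{j=-1}^1h^{i-j}\big(F,\cC^{j}\otimes\mathcal L\big).
$$
However, your mechanism for assertion (1) is wrong as stated, and would not give the bound $\le\gamma$. You propose to twist the monad directly by $\cO_F(-h-E)\cong\cO_F(-2\xi)$ and claim that each summand of type $\cO_F(-f)$ then ``acquires exactly one unit of first cohomology.'' But $h^1\big(F,\cO_F(-2\xi-f)\big)=0$ (Proposition \ref{pLineBundle} with $a=-2$, $b=-1$, or Leray: $R^1\pi_*\cO_F(-2\xi-f)\cong\cO_{\p2}(-2)$ has no cohomology). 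What the direct twist by $\cO_F(-2\xi)$ actually produces is the estimate $h^1\big(F,\cE(-2\xi)\big)\le\alpha$, coming from $h^2\big(F,\pi^*\Omega^1_{\p2}(f-3\xi)\big)=1$ on the $\alpha$ copies of $\pi^*\Omega^1_{\p2}(f-\xi)$ in $\cC^{-1}$ --- this is precisely how the paper proves the \emph{forward} inequality $\gamma\le\alpha$, and since the theorem allows $\gamma<\alpha$, a bound by $\alpha$ does not prove (1). The missing step is a Serre duality swap before twisting: by \eqref{Serre}, $h^1\big(F,\cE(-h-E)\big)=h^1\big(F,\cE(-2\xi)\big)=h^2\big(F,\cE(-2f)\big)$, and it is the twist by $\mathcal L=\cO_F(-2f)$ that isolates $\gamma$, because each summand $\cO_F(-f)$ of $\cC^0$ contributes $h^2\big(F,\cO_F(-3f)\big)=1$ while every other term contributes nothing.

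Two smaller points. First, your forward-direction claim that $\gamma\le\alpha$ follows ``from the injectivity of $\cC^{-1}\to\cC^0$ restricted to the relevant summands'' is vague; the clean argument is the one just described, namely the hypercohomology bound with $\mathcal L=\cO_F(-2\xi)$, where the unique surviving term is $h^2$ of the $\Omega^1$-summands of $\cC^{-1}$. Second, in assertion (2) you should make explicit the classification of smooth integral divisors on $F$ (Remark \ref{rSmooth}: $D\ne E$ forces $D\in\vert a\xi+bf\vert$ with $a,b\ge0$), since the vanishing of all the twisted terms for $\mathcal L=\cO_F(-(a+1)\xi-(b+1)f)$ relies on this sign information; without it the dichotomy between $E$ and the other divisors, which you correctly identify as the crux, has no starting point.
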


Notice that
$$
h^1\big(F,\cE(-h-E)\big)=h^1\big(F,\cE(-2\xi)\big).
$$
Thus Theorem \ref{tSimplify} implies that if $\gamma=0$,
then $\beta\ge0$ thanks to  the definition of $\cC^{-1}$. We will see that this inequality is equivalent to $c_2(\cE)\in{\Mov}(F)$ (see Corollary \ref{cPositive}).

The following corollary is an immediate consequence of Theorem \ref{tSimplify} and of the short discussion above.

\begin{corollary}
\label{cSimplify}
Let $\cE$ be an instanton on $F$. Then $\cE$ is earnest if and only if
\begin{equation}
\label{Exotic}
h^1\big(F,\cE(-2\xi)\big)=0.
\end{equation}
Moreover, if this is true, then $c_2(\cE)\in{\Mov}(F)$,
\end{corollary}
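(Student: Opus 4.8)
The plan is to derive everything from Theorem \ref{tSimplify} together with the identity $h^1\big(F,\cE(-h-E)\big)=h^1\big(F,\cE(-2\xi)\big)$ recorded just after it, bearing in mind that on $F$ one has $i_F=2$ and hence $q_F=1$, so that the earnest condition of Definition \ref{dEarnest} reads $h^1\big(F,\cE(-h-D)\big)=0$ for every $D$ with $\vert D\vert\neq\emptyset$ containing smooth integral elements. I would treat the two implications of the equivalence separately and then append the \emph{moreover} clause.

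For the implication that earnestness forces \eqref{Exotic}, I would simply specialize the defining vanishing to the exceptional divisor. Since $E\cong\p2$ is a smooth integral divisor and $\vert E\vert\neq\emptyset$, the class $D=E$ is admissible in Definition \ref{dEarnest}, so earnestness gives $h^1\big(F,\cE(-h-E)\big)=0$; the recorded identity then yields $h^1\big(F,\cE(-2\xi)\big)=0$, which is exactly \eqref{Exotic}.

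For the converse I would read \eqref{Exotic} as the statement $\gamma=0$, since $\gamma=h^1\big(F,\cE(-2\xi)\big)$ by Theorem \ref{tSimplify}. Because $\cE$ is an instanton bundle it is $\mu$--semistable and, by the first part of that theorem, it is the cohomology of the monad $\cC^\bullet$; hence the two conclusions of its converse part apply to $\cE$. With $\gamma=0$, conclusion (1) forces $h^1\big(F,\cE(-h-E)\big)=0$, while conclusion (2) gives $h^1\big(F,\cE(-h-D)\big)=0$ for every smooth divisor $D\neq E$. To match this against the definition, I would use that $h^1\big(F,\cE(-h-D)\big)$ depends only on the linear equivalence class of $D$: given any $D$ whose linear system contains a smooth integral element $D'$, I replace $D$ by $D'$ without altering the relevant cohomology, and then split into the cases $D'=E$ and $D'\neq E$, covered by conclusions (1) and (2) respectively. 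In every case the group vanishes, so $\cE$ is earnest.

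The only delicate point is this last bridging step: the quantifier in Definition \ref{dEarnest} ranges over classes whose linear systems merely \emph{contain} smooth integral members, whereas Theorem \ref{tSimplify} is phrased for an actual smooth divisor and singles out the exceptional case $D=E$. Passing to a smooth integral representative and isolating the boundary case $D'=E$, handled by (1) rather than (2), is what makes the two formulations agree. Finally, for the \emph{moreover} clause I would invoke the discussion preceding the statement: when $\gamma=0$ the summand $\cO_F(-f)^{\oplus\beta+\gamma}$ of $\cC^{-1}$ forces $\beta\ge0$, and $\beta\ge0$ is equivalent to $c_2(\cE)\in\Mov(F)$ by Corollary \ref{cPositive}; hence \eqref{Exotic} implies $c_2(\cE)\in\Mov(F)$.
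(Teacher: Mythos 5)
Your proposal is correct and is essentially the paper's own argument: the paper states the corollary as an immediate consequence of Theorem \ref{tSimplify} (whose direct part realizes any instanton as the cohomology of $\cC^\bullet$ with $\gamma=h^1\big(F,\cE(-2\xi)\big)$, so that conclusions (1) and (2) of the converse part apply), of the identity $h^1\big(F,\cE(-h-E)\big)=h^1\big(F,\cE(-2\xi)\big)$, and of the observation that $\gamma=0$ forces $\beta\ge0$ via the summand $\cO_F(-f)^{\oplus\beta+\gamma}$ of $\cC^{-1}$, which combined with $\alpha\ge0$ and Corollary \ref{cPositive} gives $c_2(\cE)\in\Mov(F)$. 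Your bridging step (passing to a smooth integral representative $D'\in\vert D\vert$, since the cohomology depends only on the linear equivalence class, and treating $D'=E$ and $D'\ne E$ separately) correctly fills in the one detail the paper leaves implicit.
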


Thus earnest instanton bundles on $F$ are actually characterised by a single condition. As we pointed out above, in the classical cases of Fano threefolds $X$ with $\varrho_X=1$ and of the flag threefold this is trivially true: it is then natural to ask the following question.

\begin{question}
Are earnest instanton bundles on a Fano threefold always characterised by a finite set of vanishings?
\end{question}

We conclude the section by characterising minimal  instanton bundles and giving some partial results on the $\mu$--(semi)stability of instanton bundles. In particular, we prove that if $\cE$ is an instanton bundle with charge $\alpha\xi^2+\beta f^2$ such that $\alpha\le14$, then $\cE$ is automatically $\mu$--stable. The following question is then natural.

\begin{question}
Is every instanton bundle on $F$ $\mu$--stable?
\end{question}

The problem of the existence of instanton bundles is studied in Section \ref{sProof}. More precisely, we describe a construction (see Construction \ref{conInstanton}) which leads to $\mu$--stable, earnest, generically trivial instanton bundles $\cE$ on $F$ with charge in $\Mov(F)$ and such that $\Ext^2_{F}\big(\cE,\cE\big)=0$, proving the following result.

\begin{theorem}
\label{tInstanton}
For each non--negative $\alpha,\beta\in\bZ$, with $2\alpha+\beta\ge2$, there is a generically trivial  earnest $\mu$--stable instanton bundle $\cE$ with charge $\alpha\xi^2+\beta f^2$ on $F$ such that 
$$
\dim\Ext^1_{F}\big(\cE,\cE\big)=8\alpha+4\beta-3,\qquad \Ext^2_{F}\big(\cE,\cE\big)=\Ext^3_{F}\big(\cE,\cE\big)=0.
$$
\end{theorem}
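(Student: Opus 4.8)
The plan is to realise $\cE$ as the cohomology of the monad $\cC^\bullet$ of Theorem~\ref{tSimplify} with $\gamma=0$, and to read off every required property from that presentation. First I would observe that the choice $\gamma=0$ removes the summand $\cO_F(-f)^{\oplus\gamma}$ from $\cC^0$ and makes $\rk(\cC^1)=2\alpha+\beta-2$, which is non--negative precisely under the hypothesis $2\alpha+\beta\ge2$; this accounts for the numerical restriction. The role of Construction~\ref{conInstanton} should be to exhibit maps $\cC^{-1}\to\cC^0$ and $\cC^0\to\cC^1$ for which the first is fibrewise injective and the second fibrewise surjective, so that the cohomology $\cE$ is a genuine rank $2$ bundle with $c_1(\cE)=0$ and $c_2(\cE)=\alpha\xi^2+\beta f^2$. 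I would establish this maximal--rank behaviour for a general pair of maps by a standard dimension count on the degeneracy loci, using that $\sHom(\cC^{-1},\cC^0)$ and $\sHom(\cC^0,\cC^1)$ are generated globally enough for the loci where the rank drops to have positive codimension in $F$.

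Once $\cE$ is a bundle, the converse part of Theorem~\ref{tSimplify} already gives that $\cE$ is an instanton bundle with the prescribed charge as soon as it is $\mu$--semistable, and with $\gamma=0$ it forces $h^1\big(F,\cE(-h-E)\big)=h^1\big(F,\cE(-2\xi)\big)=0$; by Corollary~\ref{cSimplify} this is exactly earnestness and it yields $c_2(\cE)\in\Mov(F)$. To establish $\mu$--stability directly I would rule out any nonzero map $\cO_F(a\xi+bf)\to\cE$ with $4a+3b\ge0$, equivalently $h^0\big(F,\cE(-a\xi-bf)\big)=0$ for all such classes: the monad provides uniform cohomology bounds showing this group vanishes outside a finite set of classes, which I would then dispatch by explicit computation with the general monad. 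For generic triviality it suffices, by the semicontinuity remark following Definition~\ref{dEarnest}, to exhibit a single line $L$ with $\cE\otimes\cO_L\cong\cO_{\p1}^{\oplus2}$; restricting $\cC^\bullet$ to a general such $L$ gives a monad of bundles on $\p1$ whose cohomology I would compute to be trivial.

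For the cohomological part I would first evaluate $\chi\big(\cE\otimes\cE^\vee\big)$ by Riemann--Roch. Since $\rk(\cE)=2$ and $c_1(\cE)=0$ one has $\cE^\vee\cong\cE$ and $\mathrm{ch}(\cE\otimes\cE^\vee)=4-4c_2(\cE)$ in degrees $\le3$, whence
$$
\chi\big(\cE\otimes\cE^\vee\big)=\frac{c_1(T_F)\,c_2(T_F)}{6}-2\,c_2(\cE)\,c_1(T_F).
$$
Using $c_1(T_F)\,c_2(T_F)=24$ (equivalently $\chi(\cO_F)=1$) and $c_2(\cE)\,c_1(T_F)=4\alpha+2\beta$, this gives $\chi\big(\cE\otimes\cE^\vee\big)=4-8\alpha-4\beta$. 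The $\mu$--stability of $\cE$ yields $\dim\Hom_F(\cE,\cE)=1$, while Serre duality together with $\omega_F\cong\cO_F(-2h)$ and stability gives $\Ext^3_F(\cE,\cE)\cong\Hom_F\big(\cE,\cE(-2h)\big)^\vee=0$. Combining these with the Euler characteristic leaves $\dim\Ext^1_F(\cE,\cE)=8\alpha+4\beta-3+\dim\Ext^2_F(\cE,\cE)$, so everything reduces to the single vanishing $\Ext^2_F(\cE,\cE)=0$.

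Proving $\Ext^2_F(\cE,\cE)=0$ is where I expect the main difficulty to lie, and I would attack it through the monad. Splitting $\cC^\bullet$ into the two short exact sequences of its display and applying $\Hom_F(\cE,-)$ and $\Hom_F(-,\cE)$ reduces $\Ext^\bullet_F(\cE,\cE)$ to the cohomology of the total $\Hom$--complex of $\cC^\bullet$ with itself; since the summands of the $\cC^i$ are twists of $\cO_F$ and of $\pi^*\Omega^1_{\p2}$ drawn from a full exceptional collection, most of the $\Ext$ groups between them vanish and the survivors are computed on $\p2$ via the projection formula and Bott's formulas. The vanishing of $\Ext^2_F(\cE,\cE)$ should then follow from these computations together with the maximal rank of the maps induced on the surviving cohomology by a general monad; in particular I expect the genericity used in the first step to be exactly what annihilates the last potentially nonzero contribution. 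Granting this, the Euler characteristic yields $\dim\Ext^1_F(\cE,\cE)=8\alpha+4\beta-3$ and $\Ext^2_F(\cE,\cE)=\Ext^3_F(\cE,\cE)=0$, which completes the proof and shows that $[\cE]$ is a smooth point of the moduli space.
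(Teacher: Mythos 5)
Your overall architecture (produce $\cE$, check it is an instanton via Theorem \ref{tSimplify}, reduce the dimension count to the single vanishing $\Ext^2_F(\cE,\cE)=0$ via Riemann--Roch, Serre duality and simplicity) is sound, and your Euler characteristic $\chi(\cE\otimes\cE^\vee)=4-8\alpha-4\beta$ agrees with Lemma \ref{lExt12}. But your route differs from the paper's at the decisive point, and it is exactly there that it has genuine gaps. The paper does \emph{not} build $\cE$ from a general monad: Construction \ref{conInstanton} takes $X$ a disjoint union of $\beta+1$ general lines in $\Lambda_\pi$ and $\alpha$ general conics in $\Gamma$ and applies the Hartshorne--Serre correspondence (Theorem \ref{tSerre}) to get
$$
0\longrightarrow\cO_F(-f)\longrightarrow\cE\longrightarrow\cI_{X\vert F}(f)\longrightarrow0,
$$
and every property is then read off this sequence and the geometry of $X$. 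In your plan, already the existence step is unproven: fibrewise injectivity/surjectivity of a general pair of monad maps is not a formal Bertini-type statement here, because the relevant $\sHom$ sheaves are \emph{not} all globally generated; for instance $\sHom\big(\cO_F(-f),\cO_F(-\xi)\big)\cong\cO_F(f-\xi)$ has a one-dimensional space of sections, all vanishing on $E$, so the degeneracy-locus dimension count you invoke does not apply off the shelf.

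The fatal gap is $\mu$-(semi)stability, which you need even to invoke the converse of Theorem \ref{tSimplify}. By Lemma \ref{lHoppe} you must show $h^0\big(F,\cE(-a\xi-bf)\big)=0$ for \emph{all} $(a,b)$ with $4a+3b\ge0$, and along the ray $(a,b)=(3\lambda,-4\lambda)$, $\lambda\ge1$, these twists are pairwise incomparable in the effectivity order, so there is no ``finite set of classes'' to dispatch. Worse, the monad's cohomological bound is useless there: the display only gives $h^0\big(F,\cE(-3\lambda\xi+4\lambda f)\big)\le h^0\big(F,\cC^0\otimes\mathcal L\big)+h^1\big(F,\cC^{-1}\otimes\mathcal L\big)$, and by Proposition \ref{pLineBundle} one has $h^1\big(F,\cO_F(-3\lambda\xi+(4\lambda-1)f)\big)=\sum_{j=0}^{3\lambda-2}\binom{4\lambda-j}{2}>0$, growing with $\lambda$; so you would have to control the actual maps of a general monad for infinitely many twists, with no method indicated. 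That this is genuinely hard is witnessed by the paper itself: Proposition \ref{pStable} only proves $\mu$-stability of instantons for $\alpha\le14$, and full $\mu$-stability is left as an open question. The Serre construction sidesteps all of this, since stability reduces to $h^0$ of twists of $\cI_{X\vert F}$, the one delicate case $h^0\big(F,\cI_{X\vert F}(-a(\xi-f))\big)$ dying because every divisor in $\vert\lambda(\xi-f)\vert$ is supported on $E$ while $X$ is general. Likewise your $\Ext^2$-vanishing rests on an unproven maximal-rank claim for the self-$\Hom$ complex of a general monad, whereas the paper gets it cheaply from the two sequences and the normal bundles $\cN_{C_i\vert F}\cong\cO_{\p1}(1)^{\oplus2}$, $\cN_{L_j\vert F}\cong\cO_{\p1}^{\oplus2}$. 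So while the monadic strategy is attractive and would even give earnestness for free from $\gamma=0$ via Corollary \ref{cSimplify}, as it stands it leaves existence, semistability and $\Ext^2=0$ all unestablished.
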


Let $\cI_F(\alpha\xi^2+\beta f^2) $ be the locus of points representing instanton bundles in the moduli space $\cM_F(2;0,\alpha\xi^2+\beta f^2)$.

Construction \ref{conInstanton} and the previous theorem have the following consequence whose proof is given in Section \ref{sComponent}.

\begin{theorem}
\label{tComponent}
For each non--negative $\alpha,\beta\in\bZ$, with $2\alpha+\beta\ge2$ there is an irreducible component 
$$
\cI_F^{0}(\alpha\xi^2+\beta  f^2)\subseteq\cI_F(\alpha\xi^2+\beta  f^2)
$$
 which is  generically smooth of dimension $8\alpha+4\beta-3$ and containing all the points corresponding to the bundles obtained via Construction \ref{conInstanton}.
 \end{theorem}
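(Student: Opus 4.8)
The plan is to realise $\cI_F^0(\alpha\xi^2+\beta f^2)$ as the instanton locus inside the unique irreducible component of $\cM_F(2;0,\alpha\xi^2+\beta f^2)$ passing through the bundles of Construction~\ref{conInstanton}, and to control that component by deformation theory. The essential input is Theorem~\ref{tInstanton}: for the given $\alpha,\beta$ it produces a $\mu$--stable (hence stable) instanton bundle $\cE$, so that $[\cE]$ is an honest point of $\cM_F(2;0,\alpha\xi^2+\beta f^2)$, and it comes with the two numerical facts $\Ext^2_F(\cE,\cE)=0$ and $\dim\Ext^1_F(\cE,\cE)=8\alpha+4\beta-3$.

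First I would analyse $\cM_F$ near $[\cE]$. Because $F$ is rational we have $h^i(F,\cO_F)=0$ for $i>0$, so fixing $c_1=0$ is the same as fixing $\det\cE\cong\cO_F$ (as $\Pic^0(F)=0$), and, for the stable---hence simple---bundle $\cE$, the trace maps $\Ext^i_F(\cE,\cE)\to H^i(F,\cO_F)$ identify the trace--free subspaces with the full groups for every $i\ge1$. By the standard deformation theory of stable sheaves the Zariski tangent space of $\cM_F$ at $[\cE]$ is $\Ext^1_F(\cE,\cE)$ and the obstructions lie in $\Ext^2_F(\cE,\cE)$; the vanishing of the latter gives that $\cM_F(2;0,\alpha\xi^2+\beta f^2)$ is smooth at $[\cE]$ of dimension $\dim\Ext^1_F(\cE,\cE)=8\alpha+4\beta-3$. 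Hence there is a unique irreducible component $C\ni[\cE]$, it is generically smooth, and $\dim C=8\alpha+4\beta-3$.

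Next I would cut out the instanton locus. Since $i_F=2$ and $q_F=1$, the condition $c_1=0$ holds identically on $\cM_F$, every point of which is semistable and therefore $\mu$--semistable; so $\cI_F(\alpha\xi^2+\beta f^2)$ is obtained by imposing on $\cM_F$ the open conditions of local freeness together with, by semicontinuity, $h^0(F,\cF)=0$ and $h^1(F,\cF(-h))=0$. Thus $\cI_F$ is open in $\cM_F$, and I set $\cI_F^0:=C\cap\cI_F$: it is a nonempty open subset of the irreducible $C$, hence an irreducible component of $\cI_F$, of dimension $8\alpha+4\beta-3$ and generically smooth. Finally, the bundles of Construction~\ref{conInstanton} vary in a family over an irreducible parameter space, hence map to an irreducible (in particular connected) subset $Z\subseteq\cM_F$; each of them satisfies $\Ext^2_F(\cE,\cE)=0$ and so is a smooth point of $\cM_F$. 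Since the smooth locus of $\cM_F$ is the disjoint union of the smooth loci of its components, the connected set $Z$ lies entirely in $C$; being instantons, these bundles lie in $C\cap\cI_F=\cI_F^0$, which proves the containment asserted in the statement.

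The main point requiring care is the deformation--theoretic bookkeeping---using $h^{1}(F,\cO_F)=h^2(F,\cO_F)=0$ and $\Pic^0(F)=0$ to ensure that the unobstructedness and the dimension produced by Theorem~\ref{tInstanton} transfer verbatim to the component $C$, with no correction coming from a trace or from a varying determinant---together with the verification that the parameter space underlying Construction~\ref{conInstanton} is irreducible, which is exactly what forces all the constructed bundles onto a single component rather than onto several distinct ones.
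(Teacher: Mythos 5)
Your proposal is correct and takes essentially the same route as the paper's proof: Theorem \ref{tInstanton} supplies $\Ext^2_{F}\big(\cE,\cE\big)=0$ and $\dim\Ext^1_{F}\big(\cE,\cE\big)=8\alpha+4\beta-3$, and the irreducibility of the parameter space $\Gamma^{\times\alpha}\times\Lambda_\pi^{\times(\beta+1)}$ underlying Construction \ref{conInstanton} (together with smoothness of the moduli space at each constructed bundle) forces the whole family into a single generically smooth component of that dimension. Your additional bookkeeping --- the trace maps and $h^i\big(F,\cO_F\big)=0$ for $i\ge1$, the openness of the instanton conditions cutting $\cI_F$ out of $\cM_F$ --- is sound and merely makes explicit what the paper leaves implicit.
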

 
In the same section we also characterise instanton bundles with charge $\beta f^2$, showing that $\cI_F(\beta  f^2)$ is irreducible. Similarly, we also show therein that the locus of earnest instanton bundles  inside $\cI_F(\alpha\xi^2)$ coincides with $\cI_F^{0}(\alpha\xi^2)$.
 
\begin{question}
Is $\cI_F^{0}(\alpha\xi^2+\beta f^2)$  the locus of earnest instanton bundles  inside $\cI_F(\alpha\xi^2+\beta f^2)$?
\end{question}

 We are not able to prove or disprove the existence of instanton bundles on $F$ with charge in the complement of $\Mov(F)$. Thus the following related questions rise naturally.

\begin{question}
Is the charge of every instanton bundle on $F$ in $\Mov(F)$? Are there non--earnest instanton bundles?
\end{question}

\subsection{Acknowledgements}
The authors are particularly indebted with the referee for her/his criticisms, questions, remarks and suggestions which have considerably improved the whole exposition.

\section{General facts}
We list below some general facts which will be used in the paper.  Let $X$ be any smooth projective variety with canonical line bundle $\omega_X$.

If $\cG$ and $\mathcal H$ are coherent sheaves on $X$, then  the Serre duality holds
\begin{equation}
\label{Serre}
\Ext_X^i\big(\mathcal H,\cG\otimes\omega_X\big)\cong \Ext_X^{\dim(X)-i}\big(\cG,\mathcal H\big)^\vee
\end{equation}
(see \cite[Proposition 7.4]{Ha3}). 


In what follows we will handle simple sheaves. The following lemma will be helpful.

\begin{lemma}
\label{lExt3}
If $\cG$ is a simple coherent torsion--free sheaf with $c_1(\cG)=0$ on a smooth variety $X$ with $h^0\big(X,\omega_X^{-1}\big)>0$ and $h^0\big(X,\omega_X^{\rk(\cG)}\big)=0$, then
$$
\Ext^{\dim(X)}_{X}\big(\cG,\cG\big)=0.
$$
\end{lemma}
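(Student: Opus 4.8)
The plan is to dualise via Serre duality and then rule out a nonzero homomorphism by combining the simplicity of $\cG$ with the effectivity of the anticanonical class. Applying \eqref{Serre} with $\mathcal H=\cG$ and $i=0$ gives $\Ext^{\dim(X)}_X\big(\cG,\cG\big)\cong\Hom_X\big(\cG,\cG\otimes\omega_X\big)^\vee$, so it suffices to prove that every $\varphi\in\Hom_X\big(\cG,\cG\otimes\omega_X\big)$ vanishes. I would argue by contradiction, assuming $\varphi\ne0$.

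Next I would exploit the hypothesis $h^0\big(X,\omega_X^{-1}\big)>0$: a nonzero global section of $\omega_X^{-1}$ is the same as a nonzero morphism $s\colon\omega_X\to\cO_X$, whose cokernel is $\cO_D$ for the effective anticanonical divisor $D=\mathrm{div}(s)$. Tensoring $\varphi$ with $s$ produces an endomorphism $\psi:=(\mathrm{id}_\cG\otimes s)\circ\varphi\in\Hom_X\big(\cG,\cG\big)$, which by simplicity must be $\psi=\lambda\,\mathrm{id}_\cG$ for some $\lambda\in\bC$. The core of the argument is to show $\lambda=0$, and here the second hypothesis enters: $h^0\big(X,\omega_X^{\rk(\cG)}\big)=0$ forces $\omega_X\not\cong\cO_X$, so $D\ne\emptyset$ and the map $\mathrm{id}_\cG\otimes s$ has nonzero cokernel $\cG\otimes\cO_D\ne0$, the latter because $\cG$ is torsion-free of positive rank. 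Consequently $\mathrm{id}_\cG\otimes s$ is not surjective, so $\psi$, which factors through it, cannot be surjective; as $\lambda\,\mathrm{id}_\cG$ is an isomorphism whenever $\lambda\ne0$, we conclude $\lambda=0$. Alternatively, using $c_1(\cG)=0$ one may identify $\det\varphi$ with a section of $\omega_X^{\rk(\cG)}$ and invoke the same vanishing to reach $\lambda=0$.

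Finally I would turn $\psi=0$ into $\varphi=0$ by a torsion argument. The relation $(\mathrm{id}_\cG\otimes s)\circ\varphi=0$ means $\im(\varphi)\subseteq\ker(\mathrm{id}_\cG\otimes s)$. Tensoring the short exact sequence $0\to\omega_X\xrightarrow{\,s\,}\cO_X\to\cO_D\to0$ with $\cG$ shows that $\ker(\mathrm{id}_\cG\otimes s)$ is a quotient of $\sTor_1^{\cO_X}(\cO_D,\cG)$, hence is supported on $D$ and in particular is a torsion sheaf. Thus $\im(\varphi)$ is a torsion subsheaf of $\cG\otimes\omega_X$; but the latter is torsion-free because $\cG$ is torsion-free and $\omega_X$ is a line bundle, so $\im(\varphi)=0$ and $\varphi=0$, the desired contradiction.

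I expect the main obstacle to be the non-local-freeness of $\cG$: since it is only assumed torsion-free, I cannot treat $\varphi$ as a morphism of vector bundles, and the implication $\lambda=0\Rightarrow\varphi=0$ must be made through the support/torsion analysis of $\ker(\mathrm{id}_\cG\otimes s)$ rather than through a fibrewise rank computation. Verifying that $\cG\otimes\cO_D\ne0$ and that this kernel is genuinely torsion are the two points requiring care, both of which follow from torsion-freeness together with $D$ being a nonempty divisor.
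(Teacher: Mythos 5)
Your proof is correct, and although it shares the paper's overall skeleton---Serre duality reduces the claim to $\Hom_X\big(\cG,\cG\otimes\omega_X\big)=0$, a section of $\omega_X^{-1}$ gives $s\colon\omega_X\to\cO_X$, and composing with it lands in $\Hom_X\big(\cG,\cG\big)=\bC\cdot\mathrm{id}$ by simplicity---the endgame is genuinely different. The paper argues the resulting scalar is \emph{nonzero} (a nonzero $\psi\colon\cG\to\cG\otimes\omega_X$ composed with the injective map $\cG\otimes\omega_X\to\cG$ stays nonzero), deduces that $\psi$ is injective, and then takes determinants: $\det(\psi)$ becomes a nonzero section of $\omega_X^{\rk(\cG)}$, contradicting $h^0\big(X,\omega_X^{\rk(\cG)}\big)=0$ directly. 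You instead show the scalar is \emph{zero}: since $h^0\big(X,\omega_X^{\rk(\cG)}\big)=0$ forces $\omega_X\not\cong\cO_X$, the divisor $D$ is nonempty, $\coker(\mathrm{id}_\cG\otimes s)\cong\cG\otimes\cO_D\ne0$, so $\psi$ cannot be surjective, whence $\lambda=0$; you then upgrade $\psi=0$ to $\varphi=0$ because $\ker(\mathrm{id}_\cG\otimes s)\cong\sTor_1^{\cO_X}\big(\cO_D,\cG\big)$ is torsion while $\cG\otimes\omega_X$ is torsion-free (equivalently, $\mathrm{id}_\cG\otimes s$ is injective). Your route buys two things: it sidesteps the determinant of a map of merely torsion-free sheaves, which is exactly the point in the paper's proof requiring care when $\cG$ is not locally free and which you correctly flagged as the main obstacle; and it makes visible that $c_1(\cG)=0$ is never needed and that the vanishing hypothesis enters only through $\omega_X\not\cong\cO_X$ (on a projective variety this is in fact equivalent to the stated hypothesis, given $h^0\big(X,\omega_X^{-1}\big)>0$). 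The paper's route buys brevity: one injectivity statement plus the determinant yields the contradiction against the stated vanishing without any Tor or support analysis. Both arguments, like the paper's, tacitly use that $X$ is projective and integral (for Serre duality and for ``nowhere-vanishing section implies trivial line bundle''), so neither is more restrictive than the other on this point.
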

\begin{proof}
By definition $\dim\Hom_X\big(\cG,\cG\big)=1$. Thus every non--zero $\varphi\colon\cG\to\cG$ is the multiplication by a constant, hence it is injective because $\cG$ is torsion--free. Moreover, 
$$
\Ext^{\dim(X)}_{X}\big(\cG,\cG\big)\cong\Hom_X\big(\cG,\cG\otimes\omega_X\big)^\vee,
$$
by Equality \eqref{Serre}.

Every, non--zero $\psi\colon\cG\to\cG\otimes\omega_X$ induces a non--zero element $\varphi\in\Hom_X\big(\cG,\cG\big)$ by composing with the natural inclusion map $\cG\otimes\omega_X\subseteq\cG$. It follows that $\psi$ must be injective, hence $\det(\psi)\colon\det(\cG)\to\det(\cG)\otimes\omega_X^{\rk(\cG)}$ should be injective as well. Since $h^0\big(X,\omega_X^{\rk(\cG)}\big)=0$, it follows a contradiction. 
\end{proof}

Let $\cF$ be a vector bundle of rank $2$ on $X$ and let $s\in H^0\big(X,\cF\big)$. In general its zero--locus
$(s)_0\subseteq X$ is either empty or its codimension is at most
$2$. We can always write $(s)_0=S\cup Z$
where $Z$ has codimension $2$ (or it is empty) and $S$ has pure codimension
$1$ (or it is empty). In particular $\cF(-S)$ has a section vanishing
on $Z$, thus we can consider its Koszul complex 
\begin{equation}
  \label{seqSerre}
  0\longrightarrow \cO_X(S)\longrightarrow \cF\longrightarrow \cI_{Z\vert X}(-S)\otimes\det(\cF)\longrightarrow 0.
\end{equation}
Sequence \ref{seqSerre} tensored by $\cO_Z$ yields $\cI_{Z\vert X}/\cI^2_{Z\vert X}\cong\cF^\vee(S)\otimes\cO_Z$, whence the normal bundle of $Z$ inside $X$ satisfies
\begin{equation}
\label{Normal}
\cN_{Z\vert X}\cong\cF(-S)\otimes\cO_Z.
\end{equation}
If $S=0$, then $Z$ is locally complete intersection inside $X$, because $\rk(\cF)=2$. In particular, it has no embedded components.

The Serre correspondence allows us to revert the above construction as follows.

\begin{theorem}
  \label{tSerre}
  Let $Z\subseteq X$ be a local complete intersection subscheme of codimension $2$.
  
  If $\det(\cN_{Z\vert X})\cong\cO_Z\otimes\mathcal L$ for some $\mathcal L\in\Pic(X)$ such that $h^2\big(X,\mathcal L^\vee\big)=0$, then there exists a vector bundle $\cF$ of rank $2$ on $X$ such that:
  \begin{enumerate}
  \item $\det(\cF)\cong\mathcal L$;
  \item $\cF$ has a section $s$ such that $Z$ coincides with the zero locus $(s)_0$ of $s$.
  \end{enumerate}
  Moreover, if $H^1\big(X,{\mathcal L}^\vee\big)= 0$, the above two conditions  determine $\cF$ up to isomorphism.
\end{theorem}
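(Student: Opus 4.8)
The plan is to produce $\cF$ through the classical Serre construction, realising it as the middle term of a short exact sequence
\[
0\longrightarrow \cO_X\longrightarrow \cF\longrightarrow \cI_{Z\vert X}\otimes\mathcal L\longrightarrow 0.
\]
First I would observe that, if such an $\cF$ is locally free of rank $2$, then the map $\cO_X\to\cF$ is a section $s\in H^0\big(X,\cF\big)$ whose cokernel $\cI_{Z\vert X}\otimes\mathcal L$ is supported exactly on $Z$, so that $(s)_0=Z$; moreover, multiplicativity of the determinant along the sequence gives $\det(\cF)\cong\det(\cI_{Z\vert X}\otimes\mathcal L)\cong\mathcal L$, since $\cI_{Z\vert X}$ agrees with $\cO_X$ in codimension $1$. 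This reduces existence to producing a single \emph{locally free} extension of this shape, and such extensions are governed by $\Ext^1_X\big(\cI_{Z\vert X}\otimes\mathcal L,\cO_X\big)\cong\Ext^1_X\big(\cI_{Z\vert X},\mathcal L^\vee\big)$.

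To analyse this group I would invoke the local--to--global spectral sequence
\[
H^p\big(X,\sExt^q_X(\cI_{Z\vert X},\mathcal L^\vee)\big)\Longrightarrow \Ext^{p+q}_X\big(\cI_{Z\vert X},\mathcal L^\vee\big),
\]
whose first input is the computation of the sheaves $\sExt^q_X(\cI_{Z\vert X},\mathcal L^\vee)$. Applying $\sHom_X(-,\mathcal L^\vee)$ to $0\to\cI_{Z\vert X}\to\cO_X\to\cO_Z\to0$ and using that $Z$ is a local complete intersection of codimension $2$ --- so that $\sExt^q_X(\cO_Z,\cO_X)$ vanishes except for $q=2$, where it is canonically $\det(\cN_{Z\vert X})$ --- one finds $\sHom_X(\cI_{Z\vert X},\mathcal L^\vee)\cong\mathcal L^\vee$, a single nonzero higher sheaf $\sExt^1_X(\cI_{Z\vert X},\mathcal L^\vee)\cong\sExt^2_X(\cO_Z,\mathcal L^\vee)\cong\det(\cN_{Z\vert X})\otimes\mathcal L^\vee\vert_Z$, and vanishing thereafter. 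The hypothesis $\det(\cN_{Z\vert X})\cong\cO_Z\otimes\mathcal L$ is exactly what trivialises this last sheaf, yielding $\sExt^1_X(\cI_{Z\vert X},\mathcal L^\vee)\cong\cO_Z$.

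The spectral sequence then has only two nonzero rows, and its five--term exact sequence reads
\[
0\longrightarrow H^1\big(X,\mathcal L^\vee\big)\longrightarrow\Ext^1_X\big(\cI_{Z\vert X},\mathcal L^\vee\big)\longrightarrow H^0\big(Z,\cO_Z\big)\overset{d_2}{\longrightarrow} H^2\big(X,\mathcal L^\vee\big).
\]
Since $h^2\big(X,\mathcal L^\vee\big)=0$ by hypothesis, the evaluation map $\Ext^1_X\big(\cI_{Z\vert X},\mathcal L^\vee\big)\to H^0\big(Z,\cO_Z\big)$ is surjective, so I can select an extension class $e$ mapping to the unit section $1\in H^0\big(Z,\cO_Z\big)$. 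The crucial point --- and the step I expect to be the main obstacle --- is to verify that the associated extension $\cF$ is locally free rather than merely torsion--free. This is a purely local question along $Z$: the image of an extension class in the stalk of $\sExt^1_X(\cI_{Z\vert X},\mathcal L^\vee)\cong\cO_Z$ generates that stalk precisely when, in suitable local coordinates coming from a regular sequence defining $Z$, the class is carried by the Koszul syzygy, which for an lci of codimension $2$ is equivalent to local freeness of the middle term. Because $e$ restricts to a unit at every point of $Z$, this criterion holds everywhere, so $\cF$ is a rank $2$ vector bundle satisfying $(1)$ and $(2)$ by the discussion above.

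Finally, for the uniqueness clause I would use $H^1\big(X,\mathcal L^\vee\big)=0$: the five--term sequence then forces the evaluation map to be injective as well, hence an isomorphism onto $H^0\big(Z,\cO_Z\big)$. Consequently the extension class inducing the unit section is uniquely determined up to the scalar automorphisms of $\cO_X$ and of $\cI_{Z\vert X}\otimes\mathcal L$, and therefore the middle term $\cF$ is determined up to isomorphism, completing the argument.
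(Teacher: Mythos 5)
Your existence argument is correct, and it is essentially \emph{the} proof behind the statement: the paper itself offers no argument, delegating entirely to the citation of Arrondo, and your route --- the two--row local--to--global spectral sequence, the identification $\sExt^1_X(\cI_{Z\vert X},\mathcal L^\vee)\cong\sExt^2_X(\cO_Z,\mathcal L^\vee)\cong\det(\cN_{Z\vert X})\otimes\mathcal L^\vee\otimes\cO_Z\cong\cO_Z$ via the fundamental local isomorphism for an lci of codimension $2$, surjectivity of $\Ext^1_X(\cI_{Z\vert X},\mathcal L^\vee)\to H^0(Z,\cO_Z)$ from $h^2(X,\mathcal L^\vee)=0$, and the standard criterion that the middle term is locally free exactly at the points of $Z$ where the class generates the stalk of $\sExt^1$ --- is the classical one (Arrondo's \lq\lq home--made\rq\rq\ proof is a deliberately more elementary rendering of the same skeleton, avoiding the spectral sequence). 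Your computation of $\det(\cF)$ and of $(s)_0$ from the extension is also fine, though strictly the zero scheme of $s$ is cut out by the image of $s^\vee\colon\cF^\vee\to\cO_X$, which the sequence identifies with $\cI_{Z\vert X}$, rather than by a support argument on the cokernel.

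The uniqueness paragraph, however, has a genuine gap at its last step. Injectivity of $\Ext^1_X(\cI_{Z\vert X},\mathcal L^\vee)\to H^0(Z,\cO_Z)$ pins down the class mapping to $1$, but a second pair $(\cF',s')$ satisfying (1) and (2) does \emph{not} a priori produce a class mapping to $1$: via $\cN_{Z\vert X}\cong\cF'\otimes\cO_Z$ it produces a class mapping to some invertible element $u\in H^0(Z,\cO_Z)$, because the identification $\sExt^1\cong\cO_Z$ depends on a chosen trivialization of $\det(\cN_{Z\vert X})\otimes\mathcal L^\vee\otimes\cO_Z$, and $(\cF',s')$ carries its own. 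When $Z$ is connected, $H^0(Z,\cO_Z)=\bC$, every unit is a scalar, and your argument closes; but when $Z$ is disconnected the units modulo scalars form a positive--dimensional torus, non--proportional classes give non--isomorphic pairs, and the bundle itself can genuinely vary. Concretely, on $X=\p3$ take $Z=L_1\sqcup L_2$ two skew lines and $\mathcal L=\cO_{\p3}(2)$, so $h^1\big(\p3,\cO_{\p3}(-2)\big)=h^2\big(\p3,\cO_{\p3}(-2)\big)=0$ and $\Ext^1\cong H^0(Z,\cO_Z)=\bC^2$: writing $\sigma_i$ for the (unique up to scalar) skew form on $\bC^4$ with kernel the plane under $L_i$, the nondegenerate forms $\omega_t=\sigma_1+t\sigma_2$, $t\in\bC^*$, yield pairwise non--isomorphic null--correlation bundles $\cE_t$ (these are parametrized by skew forms modulo scalar), and each $\cE_t(1)$ has determinant $\cO_{\p3}(2)$ and a section vanishing exactly on $Z$. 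So the clause \lq\lq determine $\cF$ up to isomorphism\rq\rq\ cannot be deduced the way you do for disconnected $Z$; the invariant statement, as in the precise formulations of Hartshorne and Arrondo, is that the pair $(Z,\phi)$, with $\phi$ the trivialization of $\det(\cN_{Z\vert X})\otimes\mathcal L^\vee\otimes\cO_Z$, determines the pair $(\cF,s)$ up to isomorphism and scalars. (This caveat is inherited from the statement as quoted in the paper, which moreover later invokes uniqueness for disjoint unions of rational curves in Construction \ref{conInstanton}, where exactly this orientation datum matters; but a blind proof of the clause as stated should either add the connectedness/trivialization hypothesis or flag the discrepancy, and yours silently jumps over it.)
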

\begin{proof}
See \cite{Ar}.
\end{proof}

The Riemann--Roch formula for a vector bundle $\cF$ on a threefold $X$ is
\begin{equation}
  \label{RRgeneral}
  \begin{aligned}
    \chi(\cF)&=\rk(\cF)\chi(\cO_F)+{\frac16}(c_1(\cF)^3-3c_1(\cF)c_2(\cF)+3c_3(\cF))-\\
    &-{\frac14}(\omega_Xc_1(\cF)^2-2\omega_Xc_2(\cF))+{\frac1{12}}(\omega_X^2c_1(\cF)+c_2(\Omega_{X}) c_1(\cF))
  \end{aligned}
\end{equation}
where $\omega_X:=c_1(\Omega_{X}))$ (see \cite[Theorem A.4.1]{Ha2}). Recall that  $i_Xhc_2(\Omega_{X})={24}$ and $\chi(\cO_F)=1$ when $X$ is a Fano threefold (see \cite[Exercise A.6.7]{Ha2}).

\section{Instanton bundles on Fano threefolds}
\label{sInstanton}
In this section we confront the classical definition of instanton bundle in \cite{Fa, Kuz, M--M--PL} with Definition \ref{dInstanton}.

Before dealing with the examples, we prove  the following lemma.

\begin{lemma}
\label{lExt12}
If $\cE$ is a simple instanton bundle on a Fano threefold $X$, then 
$$
\dim\Ext^1_{X}\big(\cE,\cE\big)-\dim\Ext^2_{X}\big(\cE,\cE\big)=2i_Xc_2(\cE) h-i_X\frac{c_1(\cE)^2h}2-3.
$$
\end{lemma}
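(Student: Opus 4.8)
The plan is to deduce the stated difference from Hirzebruch--Riemann--Roch applied to the locally free sheaf $\cE^\vee\otimes\cE$, after isolating the two boundary $\Ext$ groups. Since $\cE$ is a vector bundle one has $\Ext^i_X(\cE,\cE)=H^i\big(X,\cE^\vee\otimes\cE\big)$ for every $i$, so
$$
\chi\big(\cE^\vee\otimes\cE\big)=\dim\Ext^0_X(\cE,\cE)-\dim\Ext^1_X(\cE,\cE)+\dim\Ext^2_X(\cE,\cE)-\dim\Ext^3_X(\cE,\cE).
$$
The first summand equals $1$ because $\cE$ is simple. Hence, once I establish $\Ext^3_X(\cE,\cE)=0$, the left--hand side of the statement is exactly $1-\chi\big(\cE^\vee\otimes\cE\big)$, and it only remains to evaluate this Euler characteristic.

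For the top $\Ext$ I would use Serre duality \eqref{Serre}. As $\omega_X\cong\cO_X(-i_Xh)$,
$$
\Ext^3_X(\cE,\cE)\cong\Hom_X\big(\cE,\cE(-i_Xh)\big)^\vee,
$$
so it suffices to prove $\Hom_X\big(\cE,\cE(-i_Xh)\big)=0$. Suppose $\varphi$ were nonzero and set $\cJ=\im(\varphi)$, a torsion--free quotient of $\cE$ and a subsheaf of $\cE(-i_Xh)$; note $\mu\big(\cE(-i_Xh)\big)=\mu(\cE)-i_Xh^3<\mu(\cE)$ since $h$ is ample. If $\rk(\cJ)=1$, then $\mu$--semistability of $\cE$ applied to the quotient $\cJ$ gives $\mu(\cJ)\ge\mu(\cE)$, while $\mu$--semistability of $\cE(-i_Xh)$ applied to the subsheaf $\cJ$ gives $\mu(\cJ)\le\mu\big(\cE(-i_Xh)\big)<\mu(\cE)$, a contradiction. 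If instead $\rk(\cJ)=2$, then $\varphi$ is generically an isomorphism, so $\det(\varphi)$ is a nonzero element of $\Hom_X\big(\det(\cE),\det(\cE)\otimes\cO_X(-2i_Xh)\big)\cong H^0\big(X,\cO_X(-2i_Xh)\big)$, which vanishes because $\cO_X(-2i_Xh)$ is anti--ample; again a contradiction. I expect this to be the main point of the argument: it is where both the $\mu$--semistability of $\cE$ and the Fano hypothesis enter, and phrasing it through $\rk\big(\im(\varphi)\big)$ makes it work uniformly in the parity of $i_X$ (a direct appeal to Lemma \ref{lExt3} would instead force $c_1(\cE)=0$ and only cover the even--index case).

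It then remains to compute $\chi\big(\cE^\vee\otimes\cE\big)$, which I would do through Chern characters via $\mathrm{ch}\big(\cE^\vee\otimes\cE\big)=\mathrm{ch}(\cE^\vee)\,\mathrm{ch}(\cE)$. Since $\cE$ has rank $2$ this product is self--dual, so its degree--$1$ and degree--$3$ parts vanish and one finds
$$
\mathrm{ch}\big(\cE^\vee\otimes\cE\big)=4+\big(c_1(\cE)^2-4c_2(\cE)\big),
$$
the surviving contributions sitting in degrees $0$ and $2$. Pairing with the Todd class of $X$ and retaining only the degree--$3$ term, the computation reduces to the two Fano identities recorded after \eqref{RRgeneral}, namely $\chi(\cO_X)=1$ and $i_Xh\,c_2(\Omega_X)=24$ (so that $\omega_Xc_2(\Omega_X)=-i_Xh\,c_2(\Omega_X)=-24$). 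Carrying this out gives
$$
\chi\big(\cE^\vee\otimes\cE\big)=4-2i_Xc_2(\cE)h+\frac{i_X}{2}c_1(\cE)^2h.
$$

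Finally I would combine the three ingredients. Using $\dim\Ext^0_X(\cE,\cE)=1$ and $\dim\Ext^3_X(\cE,\cE)=0$,
$$
\dim\Ext^1_X(\cE,\cE)-\dim\Ext^2_X(\cE,\cE)=1-\chi\big(\cE^\vee\otimes\cE\big)=2i_Xc_2(\cE)h-\frac{i_X}{2}c_1(\cE)^2h-3,
$$
which is the asserted formula. The only genuinely delicate step is the vanishing of $\Ext^3_X(\cE,\cE)$ above; everything else is bookkeeping with Riemann--Roch and the standard invariants of a Fano threefold.
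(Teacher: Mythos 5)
Your proof is correct, and its overall skeleton coincides with the paper's one--line proof of Lemma \ref{lExt12}: $\dim\Hom_X(\cE,\cE)=1$ by simplicity, vanishing of the top $\Ext$, and Riemann--Roch for $\cE\otimes\cE^\vee$ (your Chern--character computation $\chi(\cE^\vee\otimes\cE)=4-2i_Xc_2(\cE)h+\frac{i_X}{2}c_1(\cE)^2h$ agrees with what Equality \eqref{RRgeneral} yields, so the bookkeeping matches). The genuine divergence is in how $\Ext^3_X(\cE,\cE)=0$ is obtained. The paper simply applies Lemma \ref{lExt3} to $\cE$; as you rightly flag, that lemma is \emph{stated} with the hypothesis $c_1=0$, which an instanton bundle satisfies only for $i_X$ even ($c_1(\cE)=-h$ when $i_X$ is odd), so the citation as written is narrower than the claim. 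Your substitute --- Serre duality plus the dichotomy on $\rk(\im\varphi)$ for a putative nonzero $\varphi\colon\cE\to\cE(-i_Xh)$, using $\mu$--semistability of $\cE$ and of its twist in the rank--$1$ case and $\det(\varphi)\in H^0\big(X,\cO_X(-2i_Xh)\big)=0$ in the rank--$2$ case --- is correct and works uniformly in the parity of $i_X$. That said, the discrepancy in the paper is cosmetic rather than substantive: the proof of Lemma \ref{lExt3} never actually uses $c_1(\cG)=0$ (injectivity of $\psi\colon\cG\to\cG\otimes\omega_X$ is forced by composing with the inclusion $\cG\otimes\omega_X\subseteq\cG$ and invoking simplicity, and the contradiction only needs $h^0\big(X,\omega_X^{\rk(\cG)}\big)=0$, which holds here since $\omega_X^2$ is anti--ample), so the paper's argument goes through for both parities once the lemma's hypothesis is relaxed to what its proof uses. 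The trade--off between the two routes: the paper's vanishing argument needs simplicity but no stability at that step, whereas yours needs $\mu$--semistability but not simplicity --- both hypotheses are available for an instanton bundle assumed simple, so either choice is legitimate, and yours has the merit of being self--contained and literally applicable as stated.
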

\begin{proof}
The statement follows from Lemma \ref{lExt3} applied to $\cE$ and Equality \eqref{RRgeneral} for the bundle $\cE\otimes\cE^\vee$, taking into account that $\cE$ is simple. 
\end{proof}

Now we give some examples, showing that our definition either matches, or generalizes the different classical definitions of instanton bundle (e.g. see \cite{O--S--S,O--S,Fa,M--M--PL})

Recall that on a Fano threefold with $\Pic(X)$ generated by $\cO_X(h)$, a rank $2$ vector bundle $\cE$ on $X$ is   $\mu$--stable if and only if it is simple, if and only if $c_1(\cE)=0$ and $h^0\big(X,\cE\big)=0$, thanks to the Hoppe criterion (e.g. see \cite{Hop, J--M--P--S}).

\begin{example}
\label{eP3}
If $X\cong\p3$, then $i_X=4$ and $q_X=2$. In this case, our definition of instanton bundles coincides with the classical one. Moreover, they are all generically trivial and earnest, thanks to \cite[Corollary 2 of Theorem II.2.1.4]{O--S--S} and a theorem of Maruyama (see \cite[Theorem 3.1]{Ma}). 

It is well--known that instanton bundles $\cE$ on $\p3$ correspond to the point of an open subset $\cI(\alpha)\subseteq\cM_{\p3}(2;0,\alpha)$ where $\alpha\ge1$.

In order to see that $\cI(\alpha)$ is non--empty for each positive $\alpha\in\bZ$, let $A\subseteq\p3$ be the union of $\alpha+1$ pairwise skew lines: $A$ is non--degenerate in $\p3$ by construction, hence $h^0\big(\p3,\cI_{A\vert\p3}(1)\big)=0$. Moreover, $\cN_{A\vert \p3}\cong\cO_A\otimes\cO_{\p3}(1)^{\oplus2}$ because the lines in $A$ are pairwise disjoint. Thus Theorem \ref{tSerre} with $N=3$ implies the existence of a vector bundle $\cA$ with $c_1(\cA)=\cO_{\p3}(2)$ and $t\in H^0\big(\p3,\cA\big)$ such that $(t)_0=A$. The corresponding Koszul complex is
\begin{equation}
\label{seqSerreA}
0\longrightarrow\cO_{\p3}\longrightarrow\cA\longrightarrow\cI_{A\vert\p3}(2)\longrightarrow0.
\end{equation}

The cohomology of Sequence \eqref{seqSerreA} returns $h^0\big(\p3,\cA(-1)\big)\le h^0\big(\p3,\cI_{A\vert\p3}(1)\big)=0$ and $h^1\big(\p3,\cA(-3)\big)\le h^1\big(\p3,\cI_{A\vert\p3}(-1)\big)$. Trivially $h^0\big(A,\cO_A\otimes\cO_{\p3}(-1)\big)=0$, hence the cohomology of 
\begin{equation}
\label{seqStandard}
0\longrightarrow\cI_{A\vert\p3}\longrightarrow\cO_{\p3}\longrightarrow\cO_A\longrightarrow0,
\end{equation}
implies $h^1\big(\p3,\cI_{A\vert\p3}(-1)\big)=0$.
It follows that $\cU:=\cA(-1)$ is an instanton bundle on $\p3$ with $c_2(\cU)=c_2(\cA(-1))=\alpha$.  

In particular $\cU$ is $\mu$--stable. Moreover, the cohomology of Sequence \eqref{seqSerreA} tensored by $\cU(-1)$ gives
\begin{gather*}
\dim\Ext^2_{\p3}\big(\cU,\cU\big)=h^2\big(\p3,\cU\otimes\cU\big)\le h^2\big(\p3,\cU(-1)\big)+h^2\big(\p3,\cU\otimes \cI_{A\vert\p3}(1)\big).
\end{gather*}

It is easy to deduce from Equality \eqref{Normal} and the cohomology of Sequences \eqref{seqSerreA} and \eqref{seqStandard} that all the dimension on the left are actually zero, thus Equality \eqref{RRgeneral} for $\cU$, yields that the component of the moduli space $\cM_{\p3}(2;0,\alpha)$ containing $\cU$ is generically smooth of dimension $\dim\Ext^1_{\p3}\big(\cU,\cU\big)=8\alpha-3$.

Recently, in \cite[Theorems 1.1]{Tik1,Tik2}, the author proved that $\cI(\alpha)$ is irreducible: the computations above also show that its dimension is $\dim(\cI(\alpha))=8\alpha-3$. Moreover, in \cite[Theorem 8.6]{J--V}, the authors succeeded in proving the smoothness of $\cI(\alpha)$ as a by--product of a more general result. Such a result and Lemma \ref{lExt12} then immediately imply that $\Ext^2_{\p3}\big(\cU,\cU\big)=0$ for each instanton bundle with charge $\alpha$ on $\p3$.
\end{example}

\begin{example}
\label{eFanoCyclic}
In \cite{Fa} the author defined instanton bundles on a Fano threefold $X$ with $\Pic(X)$ generated by $\cO_X(h)$, as the $\mu$--stable rank $2$ bundles $\cE$ such that $c_1(\cE)=(2q_X-i_X)h$ and $h^1\big(X,\cE(-q_Xh)\big)=0$: a similar definition can be found in in \cite{Kuz} when $i_X=2$. The description of instanton bundles on such Fano threefolds is the main object of the paper \cite{Fa}.

The aforementioned theorem of Maruyama still implies that $\cE$ is earnest in this case. As we mentioned in the introduction, the problem of showing that an instanton bundle is generically trivial on $\Lambda$ is still open (see \cite[Section 3.7 and Conjecture 3.16]{Kuz}). \end{example}

\begin{example}
\label{eFanoSix}
In \cite{M--M--PL} instanton bundles on a Fano variety with Picard number $\varrho_X\ge2$ were studied for the first time. More precisely the authors deal with instanton bundles on the flag variety, i.e. the general hyperplane section $X$ of the Segre embedding $\p2\times\p2\subseteq\p8$. The two projections $\p2\times\p2\to\p2$ induce projections $\pi_i\colon X\to \p2$ and $\Pic(X)$ is freely generated by  $\cO_X(h_i)\cong\pi^*_i\cO_{\p2}(1)$: moreover $A^2(X)$ is freely generated by $h_1^2$ and $h_2^2$ with the relation $h_1h_2=h_1^2+h_2^2$. Notice that lines on $X$ are exactly the cycles in the classes $h_1^2$ and $h_2^2$, hence $\Lambda=\Lambda_1\cup\Lambda_2$, $\Lambda_i\cong\p2$ be the Hilbert scheme of lines in the class $h_i^2$.

The authors define instanton bundles on $X$ as the ones such that $c_2(\cE)$ is a multiple of $h_1h_2$: the latter extra assumption is motivated by reasons related to physics. Nevertheless, if we even do not consider such a restriction, most of the results proved in \cite{M--M--PL} remain valid. In particular we can still find a monad for $\cE$ as in  \cite[Theorem 4.1]{M--M--PL}. If $c_2(\cE)=k_1h_1^2+k_2h_2^2$, then it looks like
\begin{align*}
0\longrightarrow\cO_X(-h_1)^{\oplus k_1}\oplus\cO_X(-h_2)^{\oplus k_2}&\longrightarrow\cO_X^{\oplus 2k_1+2k_2+2}\longrightarrow\\
&\longrightarrow \cO_X(h_1)^{\oplus k_1}\oplus\cO_X(h_2)^{\oplus k_2}\longrightarrow0.
\end{align*}
The above monad induces the two short exact sequences
\begin{equation}
\label{DisplayFlag}
\begin{gathered}
0\longrightarrow \cK\longrightarrow\cO_X^{\oplus 2k_1+2k_2+2}\longrightarrow \cO_X(h_1)^{\oplus k_1}\oplus\cO_X(h_2)^{\oplus k_2}\longrightarrow0,\\
0\longrightarrow \cO_X(-h_1)^{\oplus k_1}\oplus\cO_X(-h_2)^{\oplus k_2}\longrightarrow \cK\longrightarrow\cE\longrightarrow0.
\end{gathered}
\end{equation}

Recall that if $\alpha_1,\alpha_2\in\bZ$,  with $\alpha_1\le \alpha_2$, we have 
$$
h^i\big(X,\cO_X(\alpha_1h_1+\alpha_2h_2)\big)\ne0
$$
if and only if
\begin{itemize}
\item $i=0$ and $\alpha_1\ge0$;
\item $i=1$ and $\alpha_1\le -2$, $\alpha_1+\alpha_2+1\ge0$;
\item $i=2$ and $\alpha_2\ge0$, $\alpha_1+\alpha_2+3\le0$;
\item $i=3$ and $\alpha_2\le -2$.
\end{itemize}
In all these cases
\begin{equation}
\label{Flag}
h^i\big(X,\cO_X(\alpha_1h_1+\alpha_2h_2)\big)=(-1)^i\frac{(\alpha_1+1)(\alpha_2+1)(\alpha_1+\alpha_2+2)}{2}.
\end{equation}
(see \cite[Proposition 2.5]{C--F--M2}).

If $\cO_X(D)\cong \cO_X(a_1h_1+a_2h_2)$ has sections, then the numbers $a_i$'s must be non--negative, thanks to Equality \eqref{Flag}. It is immediate to check that 
\begin{gather*}
h^2\big(X,\cO_X(-(a_1+2)h_1-(a_2+1)h_2)\big)=h^2\big(X,\cO_X(-(a_1+1)h_1-(a_2+2)h_2)\big)=0,\\
h^1\big(X,\cO_X(-(a_1+1)h_1-(a_2+1)h_2)\big)=0,\\
h^0\big(X,\cO_X(-a_1h_1-(a_2+1)h_2)\big)=h^0\big(X,\cO_X(-(a_1+1)h_1-a_2h_2)\big)=0.
\end{gather*}
Thus the cohomology of Sequences \eqref{DisplayFlag} tensored by $\cO_X(-h-D)$ yields that $\cE$ is earnest, because $a_1,a_2\ge0$.

Notice that in this case not every instanton bundle is automatically $\mu$--stable. Indeed, in \cite{M--M--PL} the authors show the existence of strictly $\mu$--semistable instanton bundles proving that they are exactly the ones fitting in the extensions
$$
0\longrightarrow\cO_X(D)\longrightarrow\cE\longrightarrow\cO_X(-D)\longrightarrow0,
$$
 where $\cO_X(D)\cong\cO_X(dh_1-dh_2)$ for some non--zero integer $d$ (in particular see \cite[Proposition 2.5]{M--M--PL}). It can be easily proved that such strictly $\mu$--semistable instanton bundles are always  generically trivial on exactly one component of $\Lambda$.

The study of instanton bundles on the Segre embedding $\p1\times\p1\times\p1\subseteq\p7$ leads to similar results: the interested reader can find them in \cite{An}.
\end{example}

\begin{remark}
\label{rVeronese}
Let $X$ be a Fano threefold endowed with an ample line bundle $\cO_F(H)$ such that $\omega_X\cong\cO_X(-i_{X,H}H)$ for some integer $i_{X,H}$ (if any).

In principle, one could introduce the notion of instanton bundle with respect to $\cO_F(H)$ by replacing the index $i_X$ with  $i_{X,H}$  in Definition \ref{dInstanton} and defining
$$
q_{X,H}:=\left[\frac{i_{X,H}}2\right].
$$
As an example we deal with $\p3$.

The $2$--uple embedding $X\subseteq\p9$ of $\p3$ is a del Pezzo threefold. In particular $\cO_{X}(H):=\cO_{\p3}(2)$ and $\omega_X\cong\cO_{\p3}(-4)\cong\cO_X(-2H)$ so that $i_{X,H}=2$ and $q_{X,H}=1$: notice that in this case though $\Pic(X)$ is cyclic, it is not generated by $\cO_X(H)$. Since $i_{X,H}$ is even, it follows that an instanton bundle $\cE$  with respect to $\cO_F(H)$ still satisfies $c_1(\cE)=0$.

We already know that a vector bundle $\cE$ is $\mu$--stable with respect to $\cO_{\p3}(1)$ if and only if the same is true with respect to $\cO_{X}(H)$. Moreover, $h^1\big(X,\cO_{X}(-H)\big)=h^1\big(\p3,\cO_{\p3}(-2)\big)$. It follows that $\cE$ is an instanton bundle on $X$ with respect to $\cO_X(H)$, if and only if it is an instanton bundle on $\p3$ with respect to $\cO_{\p3}(1)$. 

Notice that $c_2(\cE)H=2c_2(\cE)c_1(\cO_{\p3}(1))$, hence instanton bundles of minimal charge coincide. Finally $X$ does not contain lines, hence every instanton bundle on $X$ is generically trivial.

Similarly, we can also consider the $4$--uple embedding $X\subseteq\p{34}$ of $\p3$. In particular $\cO_{X}(H):=\cO_{\p3}(4)$ and $\omega_X\cong\cO_{\p3}(-4)\cong\cO_X(-H)$, hence $i_{X,H}=1$ and $q_{X,H}=0$. Every instanton bundle $\cE$  with respect to $\cO_F(H)$ must satisfy $c_1(\cE)=-H$ and $h^1\big(X,\cE\big)=0$. Thus the bundle $\cF:=\cE(2)$ satisfies $c_1(\cF)=0$ and $h^1\big(X,\cF(-2)\big)=0$, hence it is either $\mu$--stable or $\cF\cong\cO_X^{\oplus2}$, thanks to \cite[Remark in Section II.3.4]{O--S--S}. We conclude that $\cE(2)$,  if indecomposable, is an instanton on $\p3$ with respect to $\cO_{\p3}(1)$.
\end{remark}

\section{General results on instanton bundles}
\label{sGeneral}
In this section we list some results on instanton bundles which holds true on every Fano threefold.

Let $X$ be a variety with a fixed polarization $\cO_X(h)$:
a sheaf $\cF$ on $X$ has {\sl natural cohomology} in degree $\lambda\in \bZ$ (with respect to $\cO_X(h)$), if $h^i\big(X,\cF(\lambda h)\big)\ne0$ for at most one $i$. 
In \cite{O--S} the authors gave a different definition of instanton sheaves in terms of sheaves with natural cohomology in a certain range. We can state the following proposition.

\begin{proposition}
\label{pNatural}
Every $\mu$--semistable bundle $\cE$ of rank $2$  with natural cohomology in degree $-q_X$ on a Fano threefold  $X$, such that $c_1(\cE)=(2q_X-i_X)h$, and $h^0\big(X,\cE\big)=0$ is an instanton bundle.

Every instanton bundle $\cE$ on a Fano threefold $X$ has natural cohomology in degree $\lambda$ in the range $-2q_X\le\lambda \le 0$. More precisely, $h^i\big(X,\cE(\lambda h)\big)=0$ unless either $1-q_X\le\lambda\le0$ and $i=1$, or $-2q_X\le\lambda\le-1-q_X$ and $i=2$.
\end{proposition}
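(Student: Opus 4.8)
The plan is to treat the two assertions separately, with Serre duality as the common engine. Since $\cE$ has rank $2$ with $c_1(\cE)=(2q_X-i_X)h$ and $\omega_X\cong\cO_X(-i_Xh)$, one has $\cE^\vee\cong\cE((i_X-2q_X)h)$, so Serre duality \eqref{Serre} yields the self-duality relation
\[
h^i\big(X,\cE(\lambda h)\big)=h^{3-i}\big(X,\cE((-2q_X-\lambda)h)\big)
\]
for every $\lambda\in\bZ$; in particular $\cE(-q_Xh)$ satisfies $h^0=h^3$ and $h^1=h^2$. For the first statement, assume $\cE$ is $\mu$--semistable with $c_1(\cE)=(2q_X-i_X)h$, $h^0\big(X,\cE\big)=0$ and natural cohomology in degree $-q_X$. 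The self-duality pairs the four cohomology groups of $\cE(-q_Xh)$ into $h^0=h^3$ and $h^1=h^2$; since natural cohomology allows at most one of them to be non--zero, all four must vanish. In particular $h^1\big(X,\cE(-q_Xh)\big)=0$, which is exactly the instantonic condition, so $\cE$ is an instanton bundle by Definition \ref{dInstanton}.

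For the second statement I would first dispose of $h^0$ and $h^3$ using $\mu$--semistability. A non--zero section of $\cE(\lambda h)$ gives an injection $\cO_X\hookrightarrow\cE(\lambda h)$, and comparing slopes yields $0=\mu(\cO_X)\le\mu(\cE(\lambda h))=\tfrac12(2q_X-i_X+2\lambda)h^3$, hence $\lambda\ge\tfrac{i_X}2-q_X$; together with the hypothesis $h^0\big(X,\cE\big)=0$ this forces $h^0\big(X,\cE(\lambda h)\big)=0$ for all $\lambda\le0$, and by the self-duality relation $h^3\big(X,\cE(\lambda h)\big)=0$ for all $\lambda\ge-2q_X$. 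Combining the instantonic vanishing with its dual $h^2\big(X,\cE(-q_Xh)\big)=h^1\big(X,\cE(-q_Xh)\big)=0$, and noting that the reflection $\lambda\mapsto-2q_X-\lambda$ about $-q_X$ interchanges the prescribed range for $h^1$ with that for $h^2$, the entire statement reduces to a single family of vanishings: $h^1\big(X,\cE(\lambda h)\big)=0$ for $-2q_X\le\lambda\le-q_X$.

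These I would obtain by descending induction from $\lambda=-q_X$. Choosing a general smooth $D\in|h|$ and tensoring \eqref{seqRestriction} by $\cE(\lambda h)$ produces the exact piece $H^0\big(D,\cE(\lambda h)\otimes\cO_D\big)\to H^1\big(X,\cE((\lambda-1)h)\big)\to H^1\big(X,\cE(\lambda h)\big)$. Since $\cE|_D$ is $\mu$--semistable by Maruyama's restriction theorem \cite[Theorem 3.1]{Ma} and $\cE(\lambda h)|_D$ has negative slope for $\lambda\le-q_X$, the left-hand group vanishes, while the inductive hypothesis kills the right-hand one; hence $h^1\big(X,\cE((\lambda-1)h)\big)=0$. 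Iterating from $\lambda=-q_X$ down to $\lambda=-2q_X+1$ produces every required vanishing, and dualizing gives the corresponding statement for $h^2$, completing the natural-cohomology pattern.

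The delicate point is this restriction step: the induction descends by exactly one unit of $h$, so it needs a smooth member $D\in|h|$ to which Maruyama's theorem applies, that is, essentially $\cO_X(h)$ very ample. This is automatic in the cases carrying real content: for $i_X\ge3$ the threefold is $\p3$ or a smooth quadric, with $\varrho_X=1$ and $h$ very ample, while $i_X=1$ gives $q_X=0$ and an empty range, so the only genuine case is $i_X=2$, where a single step from $-q_X=-1$ to $-2q_X=-2$ is needed; for the del Pezzo threefold $F$ of the paper $\cO_F(h)$ is very ample and the argument runs verbatim. Making the restriction argument uniform across all del Pezzo threefolds, including those whose fundamental linear system is not very ample, is the main obstacle I anticipate.
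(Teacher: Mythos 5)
Your first assertion and your Serre-duality bookkeeping for the second are correct, and structurally close to the paper: where you use the self-duality $h^i\big(X,\cE(\lambda h)\big)=h^{3-i}\big(X,\cE((-2q_X-\lambda)h)\big)$ to kill all four groups in degree $-q_X$, the paper gets $\chi(\cE(-q_Xh))=0$ from Riemann--Roch and lets natural cohomology do the same work; your slope argument for $h^0$ and the reduction to $h^1\big(X,\cE(\lambda h)\big)=0$ for $-2q_X\le\lambda\le-q_X$ are both sound. The genuine gap is exactly the one you flag at the end, and it is real rather than cosmetic: the proposition is asserted for \emph{every} Fano threefold, and for del Pezzo threefolds of degree $1$ and $2$ the fundamental system $\vert h\vert$ is not very ample (in degree $1$ it even has a base point), so Maruyama's restriction theorem \cite[Theorem 3.1]{Ma} does not apply as stated and your single descent step in the case $i_X=2$ is unsupported precisely for those two families. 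As written, your proof covers $i_X\in\{1,3,4\}$ and only part of $i_X=2$.

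The paper closes this hole by never invoking semistability of the restriction, and you can graft its trick directly into your induction. For a general smooth $H\in\vert h\vert$ (which exists for every del Pezzo threefold, even of degree $1$ or $2$), the cohomology of the restriction sequence tensored by $\cE$ gives $h^0\big(H,\cE\otimes\cO_H\big)=0$ straight from $h^0\big(X,\cE\big)=h^1\big(X,\cE(-h)\big)=0$ --- no restriction theorem; then Serre duality \emph{on the surface} $H$, using $\omega_H\cong\cO_H((1-i_X)h)$ and $\cE^\vee\cong\cE((i_X-2q_X)h)$, gives $h^2\big(H,\cE\otimes\cO_H\big)=h^0\big(H,\cE(-h)\otimes\cO_H\big)\le h^0\big(H,\cE\otimes\cO_H\big)=0$, and the long exact sequence together with $h^2\big(X,\cE(-q_Xh)\big)=0$ yields $h^2\big(X,\cE\big)=0$ --- which is Serre-dual to your missing vanishing $h^1\big(X,\cE(-2q_Xh)\big)=0$ when $i_X=2,3$. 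In the same spirit, the vanishing $h^0\big(H,\cE(\lambda h)\otimes\cO_H\big)=0$ you wanted from Maruyama follows for all $\lambda\le0$ from the inclusion $\cE(\lambda h)\otimes\cO_H\subseteq\cE\otimes\cO_H$, so $\mu$-semistability of $\cE\otimes\cO_H$ is never needed for $i_X=2,3$. This repair does not extend to $i_X=4$, since for an instanton on $\p3$ one can have $h^1\big(\p3,\cE(-1)\big)\ne0$ (e.g.\ null-correlation bundles), so $h^0\big(H,\cE\otimes\cO_H\big)=0$ fails to follow; but there $\cO_{\p3}(1)$ is very ample, your Maruyama steps are legitimate, and the paper anyway just quotes the monad description from \cite[Example 5 of Section II.3.2]{O--S--S}.
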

\begin{proof}
Equality \eqref{RRgeneral} implies $\chi(\cE(-q_Xh))=0$, hence 
\begin{equation}
\label{VanishingQ}
h^i\big(S,\cE(-q_Xh)\big)=0,\qquad i\ge0,
\end{equation}
because the cohomology of $\cE$ is natural in degree $-q_X$. Thus $\cE$ is an instanton bundle by definition. 

Conversely, let us first examine the case $\lambda=-q_X$. Since $h^0\big(X,\cE\big)=0$, it follows that $h^0\big(X,\cE(-q_Xh)\big)=h^3\big(X,\cE(-q_Xh)\big)=0$. Moreover, thanks to Equality \eqref{RRgeneral} we have $\chi(\cE(-q_Xh))=0$ because $c_1(\cE)=(2q_X-i_X)h$, hence
$$
h^1\big(X,\cE(-q_Xh)\big)=h^2\big(X,\cE(-q_Xh)\big).
$$
Equality  \eqref{VanishingQ} then follows from the instantonic condition.

Let us now examine the case $\lambda\ne -q_X$. We have
$$
h^i\big(X,\cE(\lambda h)\big)=h^{3-i}\big(X,\cE((-\lambda-2q_X) h)\big)=0, 
$$
thanks to Equality \ref{Serre}, hence it suffices to prove the thesis for $1-q_X\le \lambda\le0$: in particular we can assume $i_X\ge2$ from now on. 

In the cases $i_X=2,3$, it suffices to check that
\begin{equation}
\label{VanishingH^2}
h^2\big(X,\cE\big)=0,
\end{equation}
because $q_X=1$ in these cases. Let $H$ be a general hyperplane section of $X$: $H$ is then a Fano surface and we have the exact sequence
\begin{equation*}
\label{seqSection}
0\longrightarrow\cO_X(-h)\longrightarrow\cO_X\longrightarrow\cO_H\longrightarrow0.
\end{equation*}
Since $h^0\big(X,\cE\big)=h^1\big(X,\cE(-h)\big)=0$, it follows that $h^0\big(H,\cE\otimes\cO_H\big)=0$: Equality \eqref{Serre} then yields $h^2\big(H,\cE\otimes\cO_H\big)=0$ too. The cohomology of the above sequence and Equality \eqref{VanishingQ} yield the Equality \eqref{VanishingH^2}, hence the statement is proved when $i_X=2,3$.

In the case $i_X=4$, it is easy to check that $\cE$ has natural cohomology looking at the exact sequences associated to the monad defining $\cE$ (see \cite[Example 5 of Section II.3.2]{O--S--S}). 
\end{proof}

\begin{corollary}
\label{cCharge}
If $\cE$ is an instanton bundle on a Fano threefold $X$, then 
\begin{equation}
\label{Minimal}
c_2(\cE) h\ge\left\lbrace\begin{array}{ll} 
1\quad&\text{if $i_X=4$,}\\
2\quad&\text{if $i_X=2,3$,}\\
\frac{\deg(X)}4\quad&\text{if $i_X=1$.}
\end{array}\right.
\end{equation}
\end{corollary}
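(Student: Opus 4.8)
The plan is to extract each inequality from the Euler characteristic $\chi(\cE)$, computed via the Riemann--Roch formula \eqref{RRgeneral}, the sign of $\chi(\cE)$ being controlled by Proposition \ref{pNatural}. I would first dispose of the cases $i_X\ge2$, where $q_X\ge1$. Then $\lambda=0$ lies in the range $1-q_X\le\lambda\le0$ of Proposition \ref{pNatural}, so that $h^0\big(X,\cE\big)=h^2\big(X,\cE\big)=h^3\big(X,\cE\big)=0$ and consequently
\[
\chi(\cE)=-h^1\big(X,\cE\big)\le0 .
\]
Substituting $c_1(\cE)=(2q_X-i_X)h$, $c_3(\cE)=0$, $\omega_X=-i_Xh$, $\chi(\cO_X)=1$ and $i_Xhc_2(\Omega_{X})=24$ into \eqref{RRgeneral} rewrites $\chi(\cE)$ as an explicit affine function of the integer $c_2(\cE)h$. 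Imposing $\chi(\cE)\le0$ and performing the resulting (routine) arithmetic index by index then produces the first two lines of \eqref{Minimal}.

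The case $i_X=1$ is genuinely different, and I expect it to be the main obstacle. Here $q_X=0$, the interval $1-q_X\le\lambda\le0$ is empty, and the argument above degenerates: Riemann--Roch merely gives $\chi(\cE)=0$ and no control on $c_2(\cE)h$. To obtain a bound I would instead turn the $\mu$--semistability of $\cE$ into a Bogomolov--type inequality. Choosing a general smooth surface $S\in\vert mh\vert$ with $m\gg0$, the restriction theorem \cite[Theorem 3.1]{Ma} guarantees that $\cE\otimes\cO_S$ is $\mu$--semistable, whence Bogomolov's inequality on $S$ yields $c_1(\cE\otimes\cO_S)^2\le4c_2(\cE\otimes\cO_S)$. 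Expressing both members on $X$ through $[S]=mh$ this becomes $m\,c_1(\cE)^2h\le4m\,c_2(\cE)h$; since $c_1(\cE)=-h$ gives $c_1(\cE)^2h=h^3=\deg(X)$, dividing by $m$ leaves $c_2(\cE)h\ge\deg(X)/4$, i.e. the last line of \eqref{Minimal}.

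The delicate points are thus localized in the $i_X=1$ step. One must invoke \cite[Theorem 3.1]{Ma} to transport $\mu$--semistability from $X$ to a general surface section, and then check that intersecting the Chern classes with $[S]=mh$ and dividing by $m$ produces a discriminant inequality independent of $m$, so that no positivity is lost in the passage. An alternative that bypasses the restriction entirely is to apply the Bogomolov inequality $\big(4c_2(\cE)-c_1(\cE)^2\big)h\ge0$ directly on the threefold for $\mu$--semistable sheaves; I would nonetheless prefer the surface version, since the machinery of \cite{Ma} is already in use elsewhere in the paper.
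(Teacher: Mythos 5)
Your treatment of the cases $i_X=2,4$ and $i_X=1$ is essentially the paper's own proof. For even index the paper likewise combines Proposition \ref{pNatural} (which gives $\chi(\cE)=-h^1\big(X,\cE\big)\le0$) with Equality \eqref{RRgeneral} (which gives $\chi(\cE)=2-2c_2(\cE)h$ for $i_X=4$ and $\chi(\cE)=2-c_2(\cE)h$ for $i_X=2$); for $i_X=1$ it likewise restricts to a general hyperplane section, invoking \cite[Theorem 3.1]{Ma}, and applies the Bogomolov inequality there. Your variant with $S\in\vert mh\vert$, $m\gg0$, followed by division by $m$ is harmless, since the discriminant inequality $\big(4c_2(\cE)-c_1(\cE)^2\big)S\ge0$ is linear in $m$; if anything it is safer than the paper's choice $S\in\vert h\vert$, because very ampleness of $\cO_X(h)$ itself is delicate for some index-one families.

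The genuine gap is at $i_X=3$. There $q_X=1$ and $c_1(\cE)=-h$, and on the quadric (where $h^3=2$, $\omega_X=-3h$ and $hc_2(\Omega_X)=8$ from $i_Xhc_2(\Omega_X)=24$) the Riemann--Roch computation you propose gives $\chi(\cE)=1-c_2(\cE)h$. Hence imposing $\chi(\cE)\le0$ yields only $c_2(\cE)h\ge1$, one short of the asserted bound: the value $c_2(\cE)h=1$ is numerically consistent with $\chi(\cE)=0$ and with every vanishing at your disposal, so no ``routine arithmetic index by index'' can exclude it. The borderline object is concrete: the spinor bundle $\cS$ on the quadric is a stable rank $2$ bundle with $c_1(\cS)=-h$ and $c_2(\cS)h=1$, and for it $\chi(\cS)=0$, so the Euler-characteristic criterion is blind to charge one. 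This is exactly why the paper does not run the Riemann--Roch argument for $i_X=3$ but instead defers that case to \cite[proof of Lemma 2]{Fa}: ruling out charge one requires a separate, non-formal argument beyond the non-positivity of $\chi(\cE)$. As written, your proposal establishes the first line of \eqref{Minimal}, the $i_X=2$ part of the second line, and the third line, but not the bound $c_2(\cE)h\ge2$ for $i_X=3$.
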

\begin{proof}
If $i_X\ge2$, then $\chi(\cE)=-h^1\big(X,\cE\big)$ thanks to Proposition \ref{pNatural}. Equality \eqref{RRgeneral} implies
\begin{equation}
\label{ChiE}
\chi(\cE)=\left\lbrace\begin{array}{ll} 
2-2c_2(\cE) h\quad&\text{if $i_X=4$,}\\
2-c_2(\cE) h\quad&\text{if $i_X=2$.}
\end{array}\right.
\end{equation}
By combining the above equalities with the  non--positivity of $\chi(\cE)$, we deduce the statement for $i_X=2,4$.

If $i_X=3$ see \cite[proof of Lemma 2]{Fa}. Finally, let us consider the cases $i_X=1$. If $H$ is a general hyperplane section of $X$, then we know that $\cE\otimes\cO_H$ is $\mu$--semistable (see \cite[Theorem 3.1]{Ma}), hence the Bogomolov inequality for $\cE\otimes\cO_H$ yields the last inequality of the statement.
\end{proof}

\begin{remark}
\label{rMinimal}
An instanton bundle on $X$ is called {\sl minimal} if equality holds in Inequality \eqref{Minimal}.

When $i_X=4$, every minimal instanton bundle $\cE$ is a null--correlation bundle, i.e. it fits into an exact sequence of the form
$$
0\longrightarrow\cO_{\p3}(-1)\longrightarrow\Omega_{\p3}^1(1)\longrightarrow\cE\longrightarrow0
$$
(see \cite[Lemma II.4.3.2]{O--S--S}), hence $h^1\big(\p3,\cE(-1)\big)=1$.

If $i_X=2,3$ and $\cE$ is a minimal instanton bundle, then Equality \eqref{ChiE} implies that $h^i\big(X,\cE(\lambda)\big)=0$ in the range $-2\le \lambda\le0$ and $i\ge0$. It follows that $\cE(h)$ is an Ulrich bundle with $c_1(\cE(h))=(2q_X-i_X+2)h$ in the sense of \cite{E--S--W}. Ulrich bundles of rank $2$ on Fano threefolds with $i_X=2,3$ are completely classified. The unique Ulrich bundle of rank $2$ on the smooth quadric is $\cS(h)$ where $\cS$ is the spinor bundle (see \cite{E--He}). If $i_X=2$, then Ulrich bundles of rank $2$ have been described in \cite{A--C, C--F--M1, C--F--M2, C--F--M3, Cs1}. 

When $i_X=1$ minimal instanton bundles are not necessarily Ulrich, because we have too few vanishings in cohomology. 
\end{remark}

The last result of this section deals with Fano threefolds containing a plane $E$ (see \cite{Tsu} for their classification).

\begin{lemma}
\label{lEarnest}
Let $\cE$ be a instanton on a Fano threefold $X$ with $i_X\le2$ containing a plane $E$.

The following assertions are equivalent:
\begin{enumerate}
\item $\cE\otimes\cO_E$ is $\mu$--semistable;
\item $h^1\big(X,\cE(-E-q_Xh)\big)=0$.
\end{enumerate}
The above assertions imply the following one:
\begin{enumerate}
\item[(3)] $\cE$ is generically trivial on $\Lambda_E$.
\end{enumerate}
\end{lemma}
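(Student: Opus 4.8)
The plan is to prove the chain of equivalences $(1)\Leftrightarrow(2)$ first, and then deduce $(3)$ from them. Since $E\cong\p2$ is a plane with $Eh^2=1$, the restriction $\cE\otimes\cO_E$ is a rank $2$ bundle on $\p2$, and $\mu$--semistability there has an especially clean cohomological characterization. I would begin by writing down the standard restriction sequence
\begin{equation*}
0\longrightarrow \cE(-E-q_Xh)\longrightarrow \cE(-q_Xh)\longrightarrow \cE(-q_Xh)\otimes\cO_E\longrightarrow 0,
\end{equation*}
and take its cohomology. Since $\cE$ is an instanton bundle we know $h^1\big(X,\cE(-q_Xh)\big)=0$ by the instantonic condition, and $h^0\big(X,\cE(-q_Xh)\big)=0$ as well (this follows from $h^0(X,\cE)=0$ together with $q_X\ge0$, or directly from Proposition \ref{pNatural}). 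Feeding these vanishings into the long exact sequence, the connecting map identifies
\begin{equation*}
h^1\big(X,\cE(-E-q_Xh)\big)=h^0\big(E,\cE(-q_Xh)\otimes\cO_E\big).
\end{equation*}
Thus condition $(2)$ is equivalent to the vanishing of $h^0\big(E,\cE(-q_Xh)\otimes\cO_E\big)$.

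**The equivalence $(1)\Leftrightarrow(2)$.** Now I reduce everything to a statement about a rank $2$ bundle on $\p2$. Writing $\cG:=\cE\otimes\cO_E$, I have $\cO_E(h)\cong\cO_{\p2}(1)$ (since $E$ is a plane, $h$ restricts to the hyperplane class on $E$), so $\cG(-q_Xh)\otimes\cO_E\cong\cG(-q_X)$, and since $c_1(\cE)=(2q_X-i_X)h$ we get $c_1(\cG)=2q_X-i_X$ as a divisor class on $\p2$. The point is that for a rank $2$ bundle $\cG$ on $\p2$, $\mu$--semistability (equivalently semistability, since the rank is $2$ and the base is a curve-free surface) is equivalent to $h^0\big(\p2,\cG(-\lceil \mu(\cG)/1\rceil)\big)=0$ at the appropriate twist. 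Concretely, $\cE\otimes\cO_E$ is $\mu$--semistable if and only if it has no sub-line-bundle of slope exceeding $c_1(\cG)/2=(2q_X-i_X)/2$, i.e.\ no nonzero section after twisting down by the smallest integer $\ge c_1(\cG)/2$. Since $i_X\le 2$ forces $q_X\in\{0,1\}$ and $2q_X-i_X\le 0$, one checks that the critical twist is exactly $-q_X$: that is, $h^0\big(\p2,\cG(-q_X)\big)=0$ is precisely the destabilizing-section criterion. Hence $(1)$ holds if and only if $h^0\big(E,\cE(-q_Xh)\otimes\cO_E\big)=0$, which by the previous paragraph is exactly $(2)$. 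I would be careful here to verify the numerics of the slope threshold separately in the two cases $i_X=1$ ($q_X=0$) and $i_X=2$ ($q_X=1$), since this is where $i_X\le 2$ is genuinely used.

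**Deducing $(3)$.** For the final implication, recall from the discussion preceding Definition \ref{dEarnest} that $\cE$ is generically trivial on the component $\Lambda_E$ if and only if there exists at least one line $L\in\Lambda_E$ with $\cE\otimes\cO_L\cong\cO_{\p1}^{\oplus 2}$ (for $i_X$ even) or $\cO_{\p1}(-1)\oplus\cO_{\p1}$ (for $i_X$ odd). By construction $\Lambda_E$ contains the image $\im(\epsilon)$ of all lines lying in the plane $E$, so it suffices to exhibit one line $L\subseteq E$ with the balanced splitting type. Assuming $(1)$, the bundle $\cG=\cE\otimes\cO_E$ is a $\mu$--semistable rank $2$ bundle on $\p2$, and by the Grauert--M\"ulich theorem on $\p2$ its restriction to a general line $L\subseteq E$ is as balanced as $c_1(\cG)$ permits, namely $\cO_{\p1}^{\oplus 2}$ when $c_1(\cG)$ is even and $\cO_{\p1}\oplus\cO_{\p1}(-1)$ when $c_1(\cG)$ is odd; since $c_1(\cG)=2q_X-i_X$ has the same parity as $i_X$, this is exactly the required splitting type. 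As $L\subseteq E$ gives a point of $\Lambda_E$, the openness/semicontinuity remark after Definition \ref{dEarnest} yields generic triviality on $\Lambda_E$, proving $(3)$.

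**Main obstacle.** The delicate point is matching the slope threshold to the twist $-q_X$ in the equivalence $(1)\Leftrightarrow(2)$: the clean identification of $\mu$--semistability of $\cG$ with the single vanishing $h^0\big(\p2,\cG(-q_X)\big)=0$ relies on $2q_X-i_X$ landing in the exact range where $-q_X$ is the critical twist, which is precisely what the hypothesis $i_X\le2$ guarantees. I would treat the two cases $i_X=1,2$ explicitly to make this watertight. The passage to $(3)$ is comparatively soft, since the Grauert--M\"ulich theorem is available on the plane $E\cong\p2$ even though it may fail on $X$ itself.
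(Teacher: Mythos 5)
Your argument is correct and follows essentially the same route as the paper: you use the restriction sequence for $E$ together with the instantonic condition (plus $h^0\big(X,\cE(-q_Xh)\big)=0$) to identify $h^1\big(X,\cE(-E-q_Xh)\big)$ with $h^0\big(E,\cE(-q_Xh)\otimes\cO_E\big)$, recognize this vanishing as the rank--$2$ (semi)stability criterion on $E\cong\p2$ with critical twist $-q_X$ (the paper simply cites \cite[Lemma II.1.2.5]{O--S--S} for the case analysis $i_X=1,2$ that you verify by hand), and then deduce (3) from the Grauert--M\"ulich theorem on $\p2$, exactly as the paper does via \cite[Corollary 2 of Theorem II.2.1.4]{O--S--S}. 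The only blemish is your incidental parenthetical equating $\mu$--semistability with semistability for rank--$2$ bundles on a surface, which is not true in general but plays no role in your actual argument.
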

\begin{proof}
On the one hand, the cohomology of Sequence \eqref{seqRestriction} with $D:=E$ tensored by $\cE(-q_Xh)$ and the instantonic condition imply that $\cE\otimes\cO_E$ is $\mu$--semistable if and only if $h^1\big(X,\cE(-E-q_Xh)\big)=0$ thanks to \cite[Lemma II.1.2.5]{O--S--S}. 

Notice that if $\cE\otimes\cO_E$ is $\mu$--semistable, then it splits on the general line $L\subseteq E$ as either $\cO_{\p1}^{\oplus2}$, if $i_X$ is even, or $\cO_{\p1}(-1)\oplus\cO_{\p1}$, if $i_X$ is odd thanks to \cite[Corollary 2 of Theorem II.2.1.4]{O--S--S}. Trivially, the locus of lines $L\in\Lambda_E$ with such a splitting property is open by semicontinuity, hence $\cE$ is generically trivial on $\Lambda_E$.
\end{proof}

\section{The blow up of $\p3$ at a point}
\label{sBlow}
From now on we will focus our attention of the blow up $F:=\Bl_P\p3$ of $\p3$ at a point $P$. The threefold $F$ is endowed with two natural morphisms, the blow up map $\sigma\colon F\to \p3$ and the projection $\pi\colon F\cong\bP(\cP)\to\p2$, where $\cP:=\cO_{\p2}\oplus\cO_{\p2}(1)$. Recall that $E:=\sigma^{-1}(P)\cong\p2$ and $\sigma$ induces an isomorphism $F\setminus \sigma^{-1}(P)\cong\p3\setminus\{\ P\ \}$.

As explained in the introduction, we have the classes $\xi$ and $f$ of $\cO_{\bP(\cP)}(1)$ and $\pi^*\cO_{\p2}(1)$ respectively.  Trivially $\pi^*\cO_{\p2}(1)$ is globally generated. Since $\cP$ is globally generated, the same holds for $\cO_F(\xi)\cong\cO_{\bP(\cP)}(1)$: moreover, $\cO_F(\xi)\cong \sigma^*\cO_{\p3}(1)$. The embedding $F\subseteq\p8$ is induced by the linear system $\cO_F(h)=\cO_F(\xi+f)$ and $\omega_F\cong\cO_F(-2h)$: in particular $i_F=2$. 

Recall that  $\xi^3=\xi^2 f=\xi f^2$ are the class of a point: moreover, $\Omega=c_2(\Omega_F^1)=6\xi f$. In particular, if $\cE$ is an instanton bundle with charge $\alpha\xi^2+\beta f^2$ on $F$, then Equalities \eqref{RRgeneral} and $2hc_2(\Omega_{X})={24}$ yield
\begin{equation}
\label{RRBlowUp}
\begin{aligned}
\chi(\cE(a\xi+b f))&=\frac{a^3}3+a^2b+ab^2+2a^2+b^2+4ab+\\
&+3b+\frac{11a}3+2-(a+b+2)\alpha-(a+1)\beta.
\end{aligned}
\end{equation}

Let $H\subseteq\p3$ be a plane through $P$. On the one hand, $\sigma^{-1}(H)$ is in the class of $\xi$. On the other hand, $\sigma^{-1}(H)$ is the union of $E$ with the strict transform of $H$. Such a strict transform is in the linear system $\vert f\vert$, hence $E$ is the unique element in $\vert \xi-f\vert$. Notice that $Eh^2=1$, hence $E\subseteq \p8$ is a plane.

Conversely, let $H\subseteq F\subseteq\p8$ be a plane and assume $\cO_F(\alpha\xi+\beta f)\cong\cO_F(H)$, so that $4\alpha+3\beta=Hh^2=1$. Since both $\cO_F(f)$ and $\cO_F(\xi)$ are globally generated, it follows that $\alpha+\beta=H\xi^2$ and $\alpha=Hf^2$ are both non--negative. We deduce that $\alpha=-\beta=1$ necessarily, i.e. $E$ is the only plane contained in $F$. 

In order to prove Theorem \ref{tSimplify}  stated in the introduction we will make use the following result.

\begin{lemma}
\label{lAO}
Let $\cE$ be an instanton bundle on $F$.

Then $\cE$  is the cohomology in degree $0$ of a complex $\cC^\bullet$ where
$$
\cC^i:= \bigoplus_{q-s=i}\bigoplus_{a+b=s} H^q\big(F,\cE(-a\xi-(b-a)f)\big)\otimes \pi^*\Omega_{\p2}^{b}(bf)\otimes\wedge^{a}\cO_F(-\xi).
$$
\end{lemma}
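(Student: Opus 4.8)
The plan is to use a Beilinson-type spectral sequence associated to a full exceptional collection on $F$. Since $F\cong\bP(\cP)\to\p2$ with $\cP=\cO_{\p2}\oplus\cO_{\p2}(1)$, the relative Euler sequence together with the Beilinson resolution of the diagonal on $\p2$ provides a resolution of the diagonal $\Delta\subseteq F\times F$. The summands appearing in the formula for $\cC^i$, namely $\pi^*\Omega_{\p2}^b(bf)\otimes\wedge^a\cO_F(-\xi)$, are exactly the bundles one obtains by tensoring the standard Beilinson complex $\pi^*\Omega_{\p2}^b(bf)$ (coming from the base $\p2$, where $\Omega_{\p2}^b(b)$ are the usual Beilinson building blocks) with the Koszul complex $\wedge^a\cO_F(-\xi)$ associated to the relative tautological sequence on the $\bP^1$-bundle $F$. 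So the first step is to write down this resolution of the diagonal explicitly and identify its terms.

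First I would recall that on $\p2$ the diagonal is resolved by the Beilinson complex built from $\cO_{\p2}(-b)\boxtimes\Omega_{\p2}^b(b)$ for $b=0,1,2$, and that on the $\bP^1$-bundle $\pi\colon F\to\p2$ the relative tautological sequence $0\to\cO_F(-\xi)\to\pi^*\cP^\vee\to Q\to0$ (a rank-$2$ bundle giving $a=0,1$) yields a Koszul-type resolution of the relative diagonal. Tensoring the pullbacks of these two resolutions along the two projections $F\times F\to F$ gives a (possibly non-minimal) resolution of $\cO_\Delta$ by sheaves of the form $(\text{something on the first factor})\boxtimes(\pi^*\Omega_{\p2}^b(bf)\otimes\wedge^a\cO_F(-\xi))$. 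The "something on the first factor" is precisely what, after applying $Rp_{2*}(p_1^*\cE\otimes-)$ via base change, produces the cohomology groups $H^q\big(F,\cE(-a\xi-(b-a)f)\big)$ weighting each summand. I would verify this twist bookkeeping carefully: the $a$-th wedge of $\cO_F(-\xi)$ contributes a $-a\xi$ twist and the Beilinson factor contributes the $\p2$-cohomology in degree $b$, combining to give the stated arguments.

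The mechanism is then the standard Beilinson argument: pulling $\cE$ back along $p_1$, tensoring with the resolution of $\cO_\Delta$, and pushing forward along $p_2$ recovers $\cE$ (since $Rp_{2*}(p_1^*\cE\otimes\cO_\Delta)\cong\cE$), while the hyperdirect image of the resolution gives a spectral sequence whose $E_1$-page has exactly the terms $H^q\big(F,\cE(-a\xi-(b-a)f)\big)\otimes\pi^*\Omega_{\p2}^b(bf)\otimes\wedge^a\cO_F(-\xi)$, graded so that $\cE$ sits in total degree $0$. Reindexing by $i=q-s$ with $s=a+b$ gives the complex $\cC^\bullet$ as stated, with $\cE$ as its degree-$0$ cohomology. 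The main obstacle I expect is the careful assembly of the tensor product of the two resolutions of the diagonal and the precise identification of the twist on the first factor, so that after base change along $p_2$ the coefficient of each bundle is the asserted cohomology group; once this bookkeeping is pinned down, the spectral-sequence conclusion is formal. One should also check the degeneration/convergence is good enough to place $\cE$ exactly in degree $0$, which follows because $p_1^*\cE$ is a vector bundle and the spectral sequence of a resolution of the diagonal always converges to the identity functor.
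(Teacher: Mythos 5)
Your proposal is correct in substance and rests on the same mechanism as the paper, but the two differ markedly in presentation: the paper's entire proof consists of invoking \cite[Theorem 8]{A--O} as a black box, its only actual work being the identification, for $F\cong\bP(\cP)$, of the relative universal line bundle as $\cO_F(\xi-f)$ (after the normalization $\cG(1)$ with $\cG=\cP^\vee$) and of the universal quotient bundle as $\cQ\cong\cO_F(\xi)$ via the relative Euler sequence. You instead reconstruct that theorem from scratch: you assemble a resolution of the diagonal of $F\times F$ by combining the pullback under $\pi\times\pi$ of the Beilinson resolution of the diagonal of $\p2\times\p2$ with the relative Koszul resolution of $\Delta_F$ inside $F\times_{\p2}F$, and then run the standard $Rp_{2*}\bigl(p_1^*\cE\otimes-\bigr)$ spectral sequence. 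This is precisely how the Ancona--Ottaviani theorem is proved, so your route buys self-containedness at the cost of the diagonal-resolution bookkeeping that the citation spares the paper. One caveat in that bookkeeping, which you flagged yourself but stated incorrectly: the twist on the first factor contributed by the relative Koszul piece is $-a(\xi-f)$, since the relative universal line bundle is $\cO_F(\xi-f)$ and not $\cO_F(\xi)$ (only the second-factor bundle is $\wedge^a\cO_F(-\xi)$). Your phrasing that the $a$-th wedge ``contributes a $-a\xi$ twist'' would, combined with the $-bf$ from the base Beilinson factor, give the argument $-a\xi-bf$ rather than the correct $-a\xi-(b-a)f$ appearing in the lemma; the $+af$ correction coming from the $-f$ in the relative polarization is exactly what produces the coefficient $b-a$. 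Since your final statement records the correct twist and you announced the verification as a step to be carried out, this is a repairable slip in the sketch, not a structural gap.
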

\begin{proof}
Recall that $F\cong\bP(\cP)$, hence we can apply \cite[Theorem 8]{A--O}: notice that, with the notation in that paper, $\bP(\cG):=\bP(\mathrm{Sym}(\cG^\vee))$ for each locally free sheaf $\cG$. In particular, in our case, we thus have $\cG=\cP^\vee\cong\cO_{\p2}\oplus\cO_{\p2}(-1)$: in order to apply \cite[Theorem 8]{A--O} we must consider $\cG(1)$, hence the relative universal line bundle therein is $\cO_F(\xi-f)$.

 The universal quotient bundle $\cQ$ on $F$ can be computed by dualizing the following relative Euler exact sequence 
\begin{equation}
\label{seqRelative}
0\longrightarrow\cO_F(f-2\xi)\longrightarrow\cO_F(-\xi)\oplus\cO_F(f-\xi)\longrightarrow\cO_F\longrightarrow0,
\end{equation}
tensored by the the relative universal line bundle, i.e. $\cQ\cong\cO_F(\xi)$.
\end{proof}

We deduce from the above statement that in order to prove Theorem \ref{tSimplify} we have to compute the cohomologies $h^q\big(F,\cE(-a\xi-(b-a)f)\big)$ for certain values of $q,a,b$. Trivially we must have $0\le q\le 3$, $0\le b\le 2$ and $0\le a\le 1$. Thus we have to compute $h^q\big(F,\cE\otimes\cF_p\big)$ for $0\le q\le3$ and $0\le p\le 5$ where
\begin{align*}
(\cF_0,\cF_1,\cF_2,\cF_3,\cF_4,\cF_5):&=(\cO_F(-\xi-f),\cO_F(-\xi),\\
&\cO_F(-\xi+f),\cO_F(-2f),\cO_F(-f),\cO_F)
\end{align*}
is the Orlov collection (with respect to the polarization $\cO_F(\xi-f)$: see \cite[Corollary 2.6]{Orl}).

\begin{proposition}
\label{pTable}
Let $\cE$ be an instanton bundle on $F$.

If $c_2(\cE)=\alpha\xi^2+\beta f^2$ and $\gamma:=h^1\big(F,\cE(-2\xi)\big)$,
then $h^q\big(F,\cE\otimes\cF_p\big)$ is the number in position $(p,q)$ in the following table.
\begin{table}[H]
\centering
\bgroup
\def\arraystretch{1.5}
\begin{tabular}{ccccccc}
\cline{1-6}
\multicolumn{1}{|c|}{0} & \multicolumn{1}{c|}{0} & \multicolumn{1}{c|}{0} & \multicolumn{1}{c|}{0} & \multicolumn{1}{c|}{0} & \multicolumn{1}{c|}{0} & $q=3$ \\ \cline{1-6}
\multicolumn{1}{|c|}{0} & \multicolumn{1}{c|}{0} & \multicolumn{1}{c|}{0} & \multicolumn{1}{c|}{$\gamma$} & \multicolumn{1}{c|}{0} & \multicolumn{1}{c|}{0} & $q=2$ \\ \cline{1-6}
\multicolumn{1}{|c|}{0} & \multicolumn{1}{c|}{$\alpha$} & \multicolumn{1}{c|}{$2\alpha$} & \multicolumn{1}{c|}{$\beta+\gamma$} & \multicolumn{1}{c|}{$\alpha+\beta$} & \multicolumn{1}{c|}{$2\alpha+\beta-2$}& $q=1$ \\ \cline{1-6}
\multicolumn{1}{|c|}{0} & \multicolumn{1}{c|}{0} & \multicolumn{1}{c|}{0} & \multicolumn{1}{c|}{0} & \multicolumn{1}{c|}{0} & \multicolumn{1}{c|}{0} & $q=0$ \\ \cline{1-6}
$p=0$ & $p=1$ & $p=2$ & $p=3$ & $p=4$ & $p=5$
\end{tabular}
\egroup
\caption{The values of $h^q\big(F,\cE\otimes\cF_p\big)$}
\end{table}
\end{proposition}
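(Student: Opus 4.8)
The plan is to read the table off column by column, tracing every entry back to one of four inputs: the $\mu$--semistability of $\cE$, the instantonic condition as repackaged in Proposition~\ref{pNatural}, Serre duality \eqref{Serre} (together with $\cE^\vee\cong\cE$, valid since $\rk(\cE)=2$ and $c_1(\cE)=0$, and $\omega_F\cong\cO_F(-2h)$), and the Riemann--Roch formula \eqref{RRBlowUp}. First I would dispose of the two outer rows. Writing $\cF_p=\cO_F(a\xi+bf)$, one has $(a\xi+bf)h^2=4a+3b$, which is strictly negative for $p=0,\dots,4$; a nonzero section of $\cE\otimes\cF_p$ would saturate to a rank--one subsheaf of strictly positive slope, against $\mu$--semistability, and $h^0\big(F,\cE\big)=0$ handles $p=5$. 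This clears the row $q=0$. Applying \eqref{Serre} turns $h^3\big(F,\cE\otimes\cF_p\big)$ into $h^0\big(F,\cE\otimes\cF_p^\vee(-2h)\big)^\vee$, and since $-a\xi-bf-2h$ again pairs negatively with $h^2$, the row $q=3$ vanishes as well. With $h^0=h^3=0$ in every column, \eqref{RRBlowUp} collapses each column to its single value $h^2$, through $h^1=h^2-\chi(\cE\otimes\cF_p)$.

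Three of the six values of $h^2$ are then immediate. As $\cF_0=\cO_F(-h)$ and $q_F=1$, the naturality of the cohomology of $\cE$ in degree $-q_F$ (Proposition~\ref{pNatural}) gives $h^i\big(F,\cE(-h)\big)=0$ for all $i$, so the column $p=0$ is identically zero; the same proposition yields $h^2\big(F,\cE\big)=0$, which finishes $p=5$. For $p=3$, Serre duality identifies $h^2\big(F,\cE(-2f)\big)$ with $h^1\big(F,\cE(-2\xi)\big)=\gamma$ by the very definition of $\gamma$.

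The heart of the proof is the vanishing of $h^2$ in the three mixed columns, i.e. $h^2\big(F,\cE(-\xi)\big)=h^2\big(F,\cE(-\xi+f)\big)=h^2\big(F,\cE(-f)\big)=0$. The tool is restriction to a general $G\in\vert\xi\vert$: since $\xi=\sigma^*\cO_{\p3}(1)$, such a $G$ is the transform $\sigma^{-1}(H)\cong\p2$ of a general plane $H\subseteq\p3$ missing $P$, hence is disjoint from $E$, and $\cE\otimes\cO_G\cong(\sigma_*\cE)\vert_H$ is a rank--$2$ bundle on $\p2$ with $c_1=0$ and $c_2=\alpha+\beta$. Tensoring $0\to\cO_F(-\xi)\to\cO_F\to\cO_G\to0$ by $\cE(-f)$ gives $0\to\cE(-h)\to\cE(-f)\to(\cE\otimes\cO_G)(-1)\to0$, and $h^2\big(F,\cE(-h)\big)=0$ together with $h^2\big(\p2,(\cE\otimes\cO_G)(-1)\big)=h^0\big(\p2,(\cE\otimes\cO_G)(-2)\big)^\vee=0$ dispatch the column $p=4$. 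For the two genuinely mixed columns I would use the exceptional plane: from $0\to\cE(-\xi+f)\to\cE\to\cE\otimes\cO_E\to0$ (and the companion sequence $0\to\cE(-\xi)\to\cE(-f)\to(\cE\otimes\cO_E)(-1)\to0$), each of these $h^2$'s is exhibited as the cokernel of a restriction map to $E$, where the auxiliary vanishing $h^0\big(E,(\cE\otimes\cO_E)(-2)\big)=0$—itself falling out of $0\to\cE(-h)\to\cE(-2f)\to(\cE\otimes\cO_E)(-2)\to0$ with $h^0\big(F,\cE(-2f)\big)=h^1\big(F,\cE(-h)\big)=0$—kills the cohomology supported on $E$.

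The main obstacle, then, is exactly these last two vanishings. Plain Euler--characteristic bookkeeping is insufficient: every numerical relation among the $h^i$ one extracts from the restriction sequences turns out to be a tautology, so the surjectivity of the restriction map to $E$ (equivalently $h^2\big(F,\cE(-\xi+f)\big)=0$) and its analogue for $G$ cannot be forced formally. Breaking the symmetry requires the global geometry. The route I would pursue is to prove that $\sigma_*\cE$ is $\mu$--semistable on $\p3$—a would--be destabilising subsheaf pulls back to one of $\cE$, the only ambiguity being a twist by the exceptional divisor $E$, which a slope computation should rule out—and to control the higher direct images, noting that $R^2\sigma_*\cE=0$ and that $R^1\sigma_*\cE$ is a skyscraper at $P$ (both via $h^2\big(E,(\cE\otimes\cO_E)(j)\big)=0$ for $j\ge-1$, a consequence of $h^0\big(E,(\cE\otimes\cO_E)(-2)\big)=0$). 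Maruyama's theorem \cite[Theorem~3.1]{Ma} then makes $\cE\otimes\cO_G$ $\mu$--semistable for the general plane $H$, and a Leray analysis for $\sigma$ should reduce the two outstanding $h^2$'s to vanishings downstairs governed by this semistability. I expect this descent to $\p3$—reconciling semistability on $F$ with respect to the non--ample class $\xi$ with semistability on $\p3$—to be the genuine technical hurdle; once it is in place, feeding the $h^2$'s back into $h^1=h^2-\chi$ yields the row $q=1$, namely $0,\alpha,2\alpha,\beta+\gamma,\alpha+\beta,2\alpha+\beta-2$.
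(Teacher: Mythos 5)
Your scaffolding is sound and matches the paper's: the rows $q=0,3$ via $\mu$--semistability plus Serre duality \eqref{Serre}, the column $p=0$ and the value $h^2\big(F,\cE\big)=0$ via Proposition~\ref{pNatural} together with $\chi(\cE)=2-2\alpha-\beta\le0$, the entry $\gamma$ at $p=3$ via Serre duality, and the reduction of the whole row $q=1$ to the $h^2$ entries through \eqref{RRBlowUp}. But the proof stops exactly where the proposition's content lies: the three vanishings $h^2\big(F,\cE(-\xi)\big)=h^2\big(F,\cE(-\xi+f)\big)=h^2\big(F,\cE(-f)\big)=0$ are replaced by a program (semistability of $\sigma_*\cE$ on $\p3$, Maruyama, Leray) that you explicitly leave unexecuted, and that program has a concrete flaw. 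Testing $\mu$--semistability of $\sigma_*\cE$ against a saturated subsheaf $\cI_{W\vert\p3}(c)$, $c\ge1$, amounts (after extending to the reflexive hull and removing the point $P$) to excluding sections of $\cE(-c\xi+mE)$ for all $m\ge0$; since $(-c\xi+mE)h^2=m-4c$, the $h$--slope of such a twist is non--negative as soon as $m\ge 4c$, so the advertised "slope computation" cannot rule these out, and $h$--semistability on $F$ does not formally descend along $\sigma$. The Leray step is equally delicate: $R^1\sigma_*\cE$ is in general a nonzero skyscraper (note $c_2(\cE\otimes\cO_E)=c_2(\cE)E=\beta$), so for instance $h^2\big(F,\cE(-\xi)\big)$ is only exhibited as the cokernel of $H^0\big(\p3,R^1\sigma_*\cE(-1)\big)\to H^2\big(\p3,(\sigma_*\cE)(-1)\big)$, which you have no means to control. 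Even your $p=4$ column is contingent on this, since $h^0\big(\p2,(\cE\otimes\cO_G)(-2)\big)=0$ presupposes the unproven semistability of the restriction.

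The paper closes precisely these three vanishings by a short, purely cohomological bootstrap with no restriction theorem at all, and this is the idea you were missing. Serre duality converts the targets into $h^1\big(F,\cE(-\xi-2f)\big)$, $h^1\big(F,\cE(-\xi-3f)\big)$ and $h^3\big(F,\cE(-2\xi)\big)=h^0\big(F,\cE(-2f)\big)=0$; the first two are killed by tensoring the pulled--back Euler sequence \eqref{seqOmega} and its twisted dual \eqref{seqOmegaDual} by $\cE(f-\xi)$ and $\cE(-\xi)$, starting from nothing more than $h^0\big(F,\cE(-\xi)\big)=0$ and $h^i\big(F,\cE(-h)\big)=0$ (first one gets $h^0\big(F,\cE\otimes\pi^*\Omega^1_{\p2}(f-\xi)\big)=0$, then $h^1\big(F,\cE(-\xi-2f)\big)=0$, then $h^1\big(F,\cE(-\xi-3f)\big)=0$); the third vanishing at $p=4$ follows from the relative Euler sequence \eqref{seqRelative} tensored by $\cE(-f)$ together with the already established $h^2$ vanishings at $p=0,1$. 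Your observation that every numerical relation extracted from restriction sequences to divisors is a tautology is accurate, but the Koszul--type resolutions of $\pi^*\Omega^1_{\p2}$ are exactly the non--tautological input that breaks the impasse; without them (or a genuinely completed descent argument) the heart of the table is unproved.
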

\begin{proof}
Some of the vanishing above are trivial. Indeed $h^0\big(F,\cE\otimes\cF_5\big)=0$ by definition. It follows that $h^0\big(F,\cE\otimes\cF_p\big)$ for each $p$, because $\cE\otimes\cF_p$ is a subbundle of $\cE$. The same argument and Equality \eqref{Serre} yield $h^3\big(F,\cE\otimes\cF_p\big)$ for each $p$. 

Proposition \ref{pNatural} implies that $h^q\big(F,\cE\otimes\cF_0\big)$ for each $q$ and that there is at most one $q$ such that $h^q\big(F,\cE\otimes\cF_5\big)\ne0$. Equality \eqref{RRBlowUp} implies that $\chi(\cE)=2-\beta-2\alpha$ which is non--positive thanks to Corollary \ref{cCharge}, hence $h^1\big(F,\cE\otimes\cF_5\big)=2\alpha+\beta-2$.

Equality \eqref{Serre} implies $h^2\big(F,\cE\otimes\cF_3\big)=h^1\big(F,\cE(-2\xi)\big)=\gamma$, hence Equality \eqref{RRBlowUp} returns $h^1\big(F,\cE\otimes\cF_3\big)=\beta+\gamma$.

Notice that we have the two exact sequences 
\begin{gather}
\label{seqOmega}
0\longrightarrow\pi^*\Omega^1_{\p2}\longrightarrow\cO_F(-f)^{\oplus3}\longrightarrow\cO_F\longrightarrow0,\\
\label{seqOmegaDual}
0\longrightarrow\cO_F(-3f)\longrightarrow\cO_F(-2f)^{\oplus3}\longrightarrow\pi^*\Omega^1_{\p2}\longrightarrow0.
\end{gather}
The first exact sequence is the pull--back of the Euler exact sequence on $\p2$ via $\pi$. The second one is the pull--back of the twisted dual of the same sequence. 

The cohomology of Sequence \eqref{seqOmega} tensored by $\cE(f-\xi)$ and the vanishing $h^0\big(F,\cE\otimes\cF_1\big)=0$ imply $h^0\big(F,\cE\otimes\pi^*\Omega^1_{\p2}(f-\xi)\big)=0$. Such a vanishing, the cohomology of Sequence \eqref{seqOmegaDual} tensored by $\cE(f-\xi)$ and $h^1\big(F,\cE\otimes\cF_0\big)=0$ return $h^1\big(F,\cE(-\xi-2f)\big)=0$. Thanks to Equality \eqref{Serre}, the latter vanishing implies $h^2\big(F,\cE\otimes\cF_1\big)=0$. We deduce $h^2\big(F,\cE\otimes\cF_1\big)=\alpha$ thanks to Equality \eqref{RRBlowUp}.

The  vanishings $h^2\big(F,\cE\otimes\cF_0\big)=h^2\big(F,\cE\otimes\cF_1\big)=0$ and the cohomology of Sequence \eqref{seqRelative} tensored by $\cE(-f)$ yield the inequality
$$
h^2\big(F,\cE\otimes\cF_4\big) \le h^3\big(F,\cE(-2\xi)\big)=h^0\big(F,\cE(-2f)\big)\le h^0\big(F,\cE\big)=0.
$$
Thus $h^1\big(F,\cE\otimes\cF_4\big)=\alpha+\beta$ (see Equality \eqref{RRBlowUp}). 
 
Let us finally consider the cohomology of Sequence \eqref{seqOmegaDual} tensored by $\cE(-\xi)$. Since
$$
h^0\big(F,\cE\otimes\pi^*\Omega^1_{\p2}(-\xi)\big)\le h^0\big(F,\cE\otimes\pi^*\Omega^1_{\p2}(f-\xi)\big)=0,
$$
and $h^1\big(F,\cE(-\xi-2f)\big)=0$, it follows from Equality \eqref{Serre} that
$$
h^2\big(F,\cE\otimes\cF_{2}\big)\le h^3\big(F,\cE(-\xi-3f)\big)=h^0\big(F,\cE(-\xi+f)\big)=0,
$$
hence $h^1\big(F,\cE\otimes\cF_{2}\big)=2\alpha$, thanks again to Equality \eqref{RRBlowUp}.

The statement is then completely proved.
\end{proof}
   
The next corollary is an immediate by--product of Proposition \ref{pTable}. It deals with the numbers $\alpha$, $\beta$, $\gamma$ clarifying their roles and specializing Corollary \ref{cCharge} to the case of instantons on $F$.

\begin{corollary}
\label{cChargeBlow}
Let $\cE$ be an instanton with charge $\alpha\xi^2+\beta f^2$ on $F$.

Then $\alpha\ge0$, $\alpha+\beta\ge0$, $\beta+\gamma\ge0$ and $2\alpha+\beta\ge2$. 
\end{corollary}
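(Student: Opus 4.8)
The plan is to observe that every number appearing in the table of Proposition \ref{pTable} is the dimension of a cohomology group $h^q\big(F,\cE\otimes\cF_p\big)$, and hence is automatically a non-negative integer. Each of the four asserted inequalities will then fall out simply by matching it with the appropriate entry of that table, so that the entire content of the corollary reduces to the remark that a vector-space dimension cannot be negative.

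Concretely, I would restrict attention to the row $q=1$ of the table. Reading off the values computed in Proposition \ref{pTable}: the entry at $p=1$ equals $\alpha$, the entry at $p=4$ equals $\alpha+\beta$, the entry at $p=3$ equals $\beta+\gamma$, and the entry at $p=5$ equals $2\alpha+\beta-2$. Since each of these equals some $h^1\big(F,\cE\otimes\cF_p\big)\ge0$, I immediately obtain $\alpha\ge0$, $\alpha+\beta\ge0$, $\beta+\gamma\ge0$, and $2\alpha+\beta-2\ge0$; the last of these rewrites as $2\alpha+\beta\ge2$, which is exactly the fourth claimed inequality.

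The main—and indeed only—substantive step is the invocation of Proposition \ref{pTable}, which has already performed all the cohomological bookkeeping: the Serre-duality identifications via \eqref{Serre}, the vanishings obtained from the Euler sequences \eqref{seqOmega} and \eqref{seqOmegaDual} together with Sequence \eqref{seqRelative}, and the numerical output of the Riemann--Roch formula \eqref{RRBlowUp}. Consequently there is no real obstacle here; once the table is in hand, the corollary is an \emph{immediate} by-product, as claimed in the statement. I would therefore only need to check that the four inequalities correspond to the four indicated nonzero entries of the table, which they do, and note in passing that these same positivity constraints specialise the general bound of Corollary \ref{cCharge} (the case $i_F=2$) to the present situation.
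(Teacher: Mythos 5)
Your proposal is correct and is essentially identical to the paper's own proof, which likewise reads the inequalities off as non-negative dimensions $h^1\big(F,\cE\otimes\cF_p\big)$ from the $q=1$ row of the table in Proposition \ref{pTable}. Your closing observation that $2\alpha+\beta\ge2$ also follows directly from Corollary \ref{cCharge} (since $c_2(\cE)h=2\alpha+\beta$ and $i_F=2$) is accurate and consistent with how that entry of the table was computed in the first place.
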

\begin{proof}
From Table 1 we deduce that the number listed in the statement are dimensions of certain vector spaces, thus they must be non--negative.
\end{proof}

In the proof of Theorem \ref{tSimplify} we will need the cohomology of certain twist of $\pi^*\Omega_{\p2}^b$. We compute it in the next propositions: we first compute the cohomology of such twists when $b=0,2$ (see also \cite[Proposition 2.3]{C--F--M2} where the statement though equivalent, is slightly different). 

\begin{proposition}
\label{pLineBundle}
We have
$$
\begin{aligned}
h^0\big(F,\cO_F(a\xi+b f)\big)&=\sum_{j=0}^{a}{b +2+j\choose2},\\
h^1\big(F,\cO_F(a\xi+b f)\big)&=\sum_{j=0}^{-a-2}{b +1-j\choose2},\\
h^2\big(F,\cO_F(a\xi+b f)\big)&=\sum_{j=0}^{a}{-b -1-j\choose2},\\
h^3\big(F,\cO_F(a\xi+b f)\big)&=\sum_{j=0}^{-a-2}{-b +j\choose2}
\end{aligned}
$$
where the summation is $0$ if the upper limit is smaller than the lower limit.
\end{proposition}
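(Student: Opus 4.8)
The plan is to exploit the structure $F\cong\bP(\cP)$, with $\cP=\cO_{\p2}\oplus\cO_{\p2}(1)$ and projection $\pi\colon F\to\p2$, and to compute the cohomology by pushing forward along $\pi$. Since the fibres of $\pi$ are isomorphic to $\p1$, the only possibly non--zero direct images of a line bundle are $\pi_*$ and $R^1\pi_*$, and for each $\cO_F(a\xi+bf)$ at most one of the two is non--zero; hence the Leray spectral sequence degenerates and yields
$$
h^q\big(F,\cO_F(a\xi+bf)\big)=h^q\big(\p2,\pi_*\cO_F(a\xi+bf)\big)+h^{q-1}\big(\p2,R^1\pi_*\cO_F(a\xi+bf)\big).
$$
This reduces the whole statement to the classical cohomology of line bundles on $\p2$.

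First I would treat the case $a\ge0$. By the projection formula $R^1\pi_*\cO_F(a\xi+bf)=0$ and $\pi_*\cO_F(a\xi+bf)\cong\mathrm{Sym}^a(\cP)\otimes\cO_{\p2}(b)$; since $\mathrm{Sym}^a(\cO_{\p2}\oplus\cO_{\p2}(1))\cong\bigoplus_{j=0}^a\cO_{\p2}(j)$, this gives $h^q\big(F,\cO_F(a\xi+bf)\big)=\sum_{j=0}^a h^q\big(\p2,\cO_{\p2}(b+j)\big)$. For $a\le-2$ instead $\pi_*\cO_F(a\xi+bf)=0$, and I would compute $R^1\pi_*$ by relative Serre duality on the $\p1$--bundle: identifying the relative dualizing sheaf $\omega_{F/\p2}\cong\cO_F(-2\xi)\otimes\pi^*\det(\cP)\cong\cO_F(-2\xi+f)$ one obtains
$$
R^1\pi_*\cO_F(a\xi+bf)\cong\big(\pi_*\cO_F((-a-2)\xi+(1-b)f)\big)^\vee\cong\bigoplus_{j=0}^{-a-2}\cO_{\p2}(b-1-j),
$$
whence $h^q\big(F,\cO_F(a\xi+bf)\big)=\sum_{j=0}^{-a-2}h^{q-1}\big(\p2,\cO_{\p2}(b-1-j)\big)$. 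The remaining case $a=-1$ gives $\pi_*=R^1\pi_*=0$, so all cohomology vanishes; this is recorded automatically by the displayed formulas, whose summations are empty precisely when $a=-1$.

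To conclude I would substitute the standard values $h^0\big(\p2,\cO_{\p2}(m)\big)=\binom{m+2}{2}$ and $h^2\big(\p2,\cO_{\p2}(m)\big)=\binom{-m-1}{2}$ (the latter by Serre duality on $\p2$), together with the vanishing of $h^1$ and $h^3$ for line bundles on $\p2$. Setting $m=b+j$ in the regime $a\ge0$ reproduces the stated formulas for $h^0$ and $h^2$ (those for $h^1$ and $h^3$ being empty sums), while setting $m=b-1-j$ in the regime $a\le-2$ reproduces the formulas for $h^1$ and $h^3$ (those for $h^0$ and $h^2$ being empty sums). I expect the only delicate point to be fixing conventions so that $\pi_*\cO_F(\xi)\cong\cP$ and pinning down the twist in $\omega_{F/\p2}$; once these are settled the argument is pure bookkeeping of binomial coefficients.
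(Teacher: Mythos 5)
Your argument is correct, and its second half takes a genuinely different route from the paper's. For $a\ge0$ the two proofs coincide: both push forward along $\pi$ using the projection formula and $\pi_*\cO_F(a\xi)\cong\mathrm{Sym}^a\cP\cong\bigoplus_{j=0}^{a}\cO_{\p2}(j)$ (the paper cites \cite[Exercises III.8.1, III.8.3 and III.8.4]{Ha2} and runs this for all $a\ge-1$). For $a\le-2$, however, the paper never computes $R^1\pi_*$ at all: it applies Serre duality \eqref{Serre} on the threefold $F$ itself, $h^i\big(F,\cO_F(a\xi+bf)\big)=h^{3-i}\big(F,\cO_F(-(a+2)\xi-(b+2)f)\big)$, which reduces the negative range to the nonnegative one in a single line. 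You instead identify $\omega_{F/\p2}\cong\cO_F(f-2\xi)$ --- correctly: it is exactly the first term of the paper's relative Euler sequence \eqref{seqRelative} --- and use relative duality along the $\p1$--bundle plus the (degenerate) Leray spectral sequence to get $R^1\pi_*\cO_F(a\xi+bf)\cong\bigoplus_{j=0}^{-a-2}\cO_{\p2}(b-1-j)$, which reproduces the stated values (for instance $h^1\big(F,\cO_F(-2\xi+f)\big)=1$, the value used later in the proof of Theorem \ref{tInstanton}). What the paper's trick buys is brevity: no relative dualizing sheaf, no Grothendieck duality, no spectral sequence, just one appeal to \eqref{Serre}. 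What yours buys is the direct--image sheaves themselves, which make the vanishing trichotomy (fibre degree $a\ge0$, $a=-1$, $a\le-2$) transparent and would be reusable elsewhere; both hinge on the same convention $\pi_*\cO_F(\xi)\cong\cP$, which is forced by $\cO_F(\xi)\cong\sigma^*\cO_{\p3}(1)$ having four sections. One small caveat, shared with the paper's own statement: besides the stated empty--sum convention, $\binom{k}{2}$ must be read as $0$ for all $k<2$, not as the generalized binomial coefficient (which is nonzero for $k<0$), e.g.\ in $h^0\big(\p2,\cO_{\p2}(m)\big)=\binom{m+2}{2}$ for $m\le-3$.
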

\begin{proof}
On the one hand, if $a\ge-1$, then \cite[Exercises III.8.1, III.8.3 and III.8.4]{Ha2} implies that
$$
h^i\big(F,\cO_F(a\xi+b f)\big)=h^i\big(\p2,\cO_{\p2}(b )\otimes\pi_*\cO_F(a\xi)\big)=\sum_{j=0}^{a}h^i\big(\p2,\cO_{\p2}(b +j)\big).
$$
On the other hand, if $a\le-1$, then Equality \eqref{Serre} yields
$$
h^i\big(F,\cO_F(a\xi+b f)\big)=h^{3-i}\big(F,\cO_F(-(a+2)\xi-(b +2)f)\big).
$$
The statement then follows by combining the above equalities.
\end{proof}

A trivial consequence of the above proposition is that $\cO_F(a\xi+b f)$ is an effective line bundle if and only if $a,a+b\ge0$. 

\begin{corollary}
\label{cPositive}
The cycle $\alpha\xi^2+\beta f^2\in A^2(F)$ is in $\Mov(F)$ if and only if $\alpha,\beta\ge0$. 
\end{corollary}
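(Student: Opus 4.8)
The plan is to unwind the definition of $\Mov(F)$ recalled in the introduction: by the duality between the pseudo--effective cone of divisors and the cone of movable curves (see \cite[Section 11.4.C]{Laz2}), a class $\gamma\in A^2(F)$ lies in $\Mov(F)$ precisely when $\gamma\cdot D\ge0$ for every pseudo--effective divisor $D$ on $F$. Since checking this against every pseudo--effective $D$ reduces to checking it against the generators of the pseudo--effective cone, the first step is to describe that cone explicitly.

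First I would determine the pseudo--effective cone of divisors of $F$. The remark following Proposition \ref{pLineBundle} states that $\cO_F(a\xi+bf)$ is effective if and only if $a\ge0$ and $a+b\ge0$. Writing $a\xi+bf=aE+(a+b)f$ (recall that $E=\xi-f$), this shows that the effective cone is the rational polyhedral cone generated by $f$ and $E=\xi-f$; being closed and two--dimensional, it coincides with the pseudo--effective cone $\overline{\mathrm{Eff}}(F)$. Hence $\gamma\in\Mov(F)$ if and only if $\gamma\cdot f\ge0$ and $\gamma\cdot(\xi-f)\ge0$.

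The second step is the intersection computation, using the relations $f^3=0$, $\xi^2=\xi f$ and the fact that $\xi^3=\xi^2 f=\xi f^2$ is the class of a point. For $\gamma=\alpha\xi^2+\beta f^2$ these give
$$
\gamma\cdot f=\alpha\,\xi^2 f+\beta\,f^3=\alpha,\qquad \gamma\cdot\xi=\alpha\,\xi^3+\beta\,\xi f^2=\alpha+\beta,
$$
so that $\gamma\cdot(\xi-f)=(\alpha+\beta)-\alpha=\beta$. Combining with the previous step, $\alpha\xi^2+\beta f^2\in\Mov(F)$ if and only if $\alpha\ge0$ and $\beta\ge0$, which is the claim.

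I do not expect a genuine obstacle here: the only points requiring care are the identification of the pseudo--effective cone with the (closed, two--dimensional) effective cone, and the verification that testing positivity on its two extremal rays $f$ and $\xi-f$ suffices. Both are immediate once the effective generators are read off from Proposition \ref{pLineBundle}, so the argument is essentially the intersection bookkeeping displayed above.
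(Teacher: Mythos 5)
Your proof is correct and follows essentially the same route as the paper: both identify the pseudo--effective cone of $F$ via the effectivity criterion $a,a+b\ge0$ from Proposition \ref{pLineBundle} and then test $\alpha\xi^2+\beta f^2$ against it, the paper pairing with a general class $a\xi+bf$ to get $\alpha(a+b)+\beta a\ge0$ while you streamline this to the two extremal rays $f$ and $E=\xi-f$, yielding $\alpha\ge0$ and $\beta\ge0$ directly. Your explicit remark that the effective cone is rational polyhedral, hence closed and equal to $\overline{\mathrm{Eff}}(F)$, is a small point the paper leaves implicit, but it does not change the argument.
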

\begin{proof}
Since the pseudo--effective cone is generated by the effective divisor (i.e. divisors in linear systems  $\vert a\xi+ bf\vert$ with $a,a+b\ge0$), it suffices to check that 
$$
(\alpha\xi^2+\beta f^2)(a\xi+bf)=\alpha(a+b)+\beta a
$$
for each $a,b$ such that $a,a+b\ge0$ if and only if $\alpha,\beta\ge0$, which is trivial.
\end{proof}

Now we deal with the case $b=1$.

\begin{proposition}
\label{pOmega}
We have
$$
\begin{aligned}
h^0\big(F,\pi^*\Omega^1_{\p2}(a\xi+b f)\big)&=\sum_{j=0}^{a}{{b +1+j}\choose1}{{b -1+j}\choose1},\\
h^1\big(F,\pi^*\Omega^1_{\p2}(a\xi+b f)\big)&=\left\lbrace\begin{array}{ll} 
1\quad&\text{if $a\ge -b\ge0$,}\\
\sum_{j=0}^{-a-2}{{b -j}\choose1}{{b -2-j}\choose1}\quad&\text{elsewhere,}
\end{array}\right.\\
h^2\big(F,\pi^*\Omega^1_{\p2}(a\xi+b f)\big)&=\left\lbrace\begin{array}{ll} 
1\quad&\text{if $a\le-b-1\le-2$,}\\
\sum_{j=0}^{a}{{-b +1-j}\choose1}{{-b -1-j}\choose1}\quad&\text{elsewhere,}
\end{array}\right.\\
h^3\big(F,\pi^*\Omega^1_{\p2}(a\xi+b f)\big)&=\sum_{j=0}^{-a-2}{{-b+2+j}\choose1}{{-b+j}\choose1}
\end{aligned}
$$
where the summation is $0$ if the upper limit is smaller than the lower limit.
\end{proposition}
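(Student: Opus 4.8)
The plan is to compute the cohomology of $\pi^*\Omega^1_{\p2}(a\xi + bf)$ by pushing forward along $\pi\colon F\cong\bP(\cP)\to\p2$ and applying the Leray spectral sequence, exactly mirroring the strategy used in Proposition \ref{pLineBundle}. The key observation is that $\pi^*\Omega^1_{\p2}(a\xi+bf)\cong\pi^*\big(\Omega^1_{\p2}(b)\big)\otimes\cO_F(a\xi)$, so by the projection formula together with \cite[Exercises III.8.1, III.8.3, III.8.4]{Ha2} we obtain, for $a\ge -1$,
$$
h^i\big(F,\pi^*\Omega^1_{\p2}(a\xi+bf)\big)=\sum_{j=0}^{a}h^i\big(\p2,\Omega^1_{\p2}(b+j)\big),
$$
since $\pi_*\cO_F(a\xi)\cong\bigoplus_{j=0}^{a}\mathrm{Sym}^j(\cP)$ decomposes as a sum of line bundles $\cO_{\p2}(j)$ for $0\le j\le a$ (using $\cP=\cO_{\p2}\oplus\cO_{\p2}(1)$), and the higher direct images vanish.

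Next I would invoke the known cohomology of twists of the cotangent bundle on $\p2$. The groups $h^i\big(\p2,\Omega^1_{\p2}(m)\big)$ are classical (e.g. from the Euler sequence $0\to\Omega^1_{\p2}(m)\to\cO_{\p2}(m-1)^{\oplus3}\to\cO_{\p2}(m)\to0$): one has $h^1\big(\p2,\Omega^1_{\p2}\big)=1$ as the only nonzero value in degree $m=0$, while for $m\ne0$ the bundle has cohomology concentrated in a single degree with dimension expressible via binomial coefficients. Writing these dimensions as products $\binom{b+1+j}{1}\binom{b-1+j}{1}$ for $h^0$ and the analogous products for $h^2,h^3$ is precisely a repackaging of $\dim H^0(\p2,\Omega^1_{\p2}(m))=\binom{m+1}{2}\cdot\frac{?}{?}$—I would verify that the product formula $\binom{m+1}{1}\binom{m-1}{1}=(m+1)(m-1)=m^2-1$ matches the rank-two Euler-sequence count $3\binom{m+1}{2}-\binom{m+2}{2}=m^2-1$ for $m\ge1$. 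The exceptional value $1$ appearing in the $h^1$ and $h^2$ cases corresponds exactly to the contribution of $h^1\big(\p2,\Omega^1_{\p2}\big)=1$ when one of the summands $j$ hits $b+j=0$, i.e. when $b\le 0\le a+b$ after reindexing.

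For the range $a\le -1$ I would reduce to the previous case via Serre duality \eqref{Serre} on $F$: since $\omega_F\cong\cO_F(-2\xi-2f)$ and $\pi^*\Omega^1_{\p2}$ is self-dual up to twist ($(\pi^*\Omega^1_{\p2})^\vee\cong\pi^*\Omega^1_{\p2}(3f)$, from dualizing \eqref{seqOmega}), one gets
$$
h^i\big(F,\pi^*\Omega^1_{\p2}(a\xi+bf)\big)=h^{3-i}\big(F,\pi^*\Omega^1_{\p2}(-(a+2)\xi-(b+1)f)\big),
$$
which lands in the already-computed range and yields the stated $h^3$ formula and the second branches of $h^1,h^2$. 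The main obstacle I anticipate is the bookkeeping of the exceptional single-dimensional contributions: tracking precisely which values of $a,b$ force the $h^1\big(\p2,\Omega^1_{\p2}\big)=1$ term to appear (giving the conditions $a\ge-b\ge0$ and $a\le-b-1\le-2$), and checking that these two boundary regimes do not overlap with the generic binomial summations. Everything else is a routine—if slightly tedious—manipulation of binomial identities and reindexing of the summation limits, so I would state the $\p2$-level cohomology as a cited input and devote the write-up to assembling the fiberwise sums and confirming the boundary cases.
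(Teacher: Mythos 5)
Your overall route is exactly the paper's: the paper's proof of this proposition is a one-liner invoking the Bott formulas on $\p2$ together with the same argument as Proposition \ref{pLineBundle}, i.e.\ the projection formula/Leray pushforward $h^i\big(F,\pi^*\Omega^1_{\p2}(a\xi+bf)\big)=\sum_{j=0}^{a}h^i\big(\p2,\Omega^1_{\p2}(b+j)\big)$ for $a\ge-1$, followed by Serre duality on $F$ for $a\le-1$. Your bookkeeping of the exceptional one-dimensional contributions is also right: the value $1$ in the $h^1$ branch is precisely the count of indices $j\in[0,a]$ with $b+j=0$, which gives the condition $a\ge-b\ge0$, and the $h^2$ exception is its Serre dual.

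There is, however, one concrete arithmetic slip in your duality step that, as written, would derail the second halves of the formulas. Since $(\pi^*\Omega^1_{\p2})^\vee\cong\pi^*\Omega^1_{\p2}(3f)$ and $\omega_F\cong\cO_F(-2\xi-2f)$, the Serre dual of $\pi^*\Omega^1_{\p2}(a\xi+bf)$ carries the twist $-(a+2)\xi+(3-b-2)f=-(a+2)\xi+(1-b)f$, not $-(a+2)\xi-(b+1)f$: your $f$-coefficient is off by $2$. The discrepancy is not harmless: applying the $h^0$ formula with your twist $b'=-b-1$ would give $h^3\big(F,\pi^*\Omega^1_{\p2}(a\xi+bf)\big)=\sum_{j=0}^{-a-2}\binom{-b+j}{1}\binom{-b-2+j}{1}$, which does not agree with the stated $\sum_{j=0}^{-a-2}\binom{-b+2+j}{1}\binom{-b+j}{1}$, whereas the corrected twist $b'=1-b$ reproduces it exactly (and likewise yields the second branches of $h^1$ and $h^2$, e.g.\ the generic $h^1$ branch becomes $\sum_{j=0}^{-a-2}\binom{b-j}{1}\binom{b-2-j}{1}$ as claimed). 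A second, purely cosmetic, slip: $\pi_*\cO_F(a\xi)\cong\mathrm{Sym}^a(\cP)$, not $\bigoplus_{j=0}^{a}\mathrm{Sym}^j(\cP)$; your subsequent decomposition $\bigoplus_{j=0}^{a}\cO_{\p2}(j)$ is nonetheless the correct one since $\mathrm{Sym}^a(\cO_{\p2}\oplus\cO_{\p2}(1))\cong\bigoplus_{j=0}^{a}\cO_{\p2}(j)$. With these two corrections your proposal is a complete proof along the same lines as the paper's.
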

\begin{proof}
It suffices to use the Bott formulas (see \cite[Formulas at p.8]{O--S--S}) and the same argument already used in the proof of Proposition \ref{pLineBundle}.
\end{proof}

The following remark will be helpful for proving Theorem \ref{tSimplify} stated in the introduction.

\begin{remark}
\label{rSmooth}
We show that $\vert a\xi+bf\vert$ contains a smooth integral divisor $D$ if and only if either $a,b\ge0$, or $a=-b=1$.

To this purpose we first notice that $\cO_F(a\xi+b f)$ is globally generated if and only if $a,b\ge0$. Indeed, on the one hand, if $a,b\ge0$ the assertion is a trivial consequence of the existence of a surjective morphism $\pi^*\cP\to\cO_F(\xi)$. On the other hand, if $\cO_F(a\xi+b f)$ is globally generated, then $a=(a\xi+b f)f^2$ and $b=(a\xi+b f)(\xi^2-f^2)$ must be non--negative.

If $a=-b=1$, then $D=E$ which is trivially smooth and integral. If $a,b\ge0$, then $\cO_F(a\xi+b f)$ is globally generated, hence $\vert a\xi+bf\vert$ contains a smooth integral divisor thanks to the Bertini theorem. 

Conversely, assume that $\vert a\xi+bf\vert$ contains a smooth integral divisor. Thus if $\cO_F(a\xi+b f)$ is not globally generated, then $a\ge1$ and $-1\ge b\ge-a$, thanks to Proposition \ref{pLineBundle}.
If $E\not\subseteq D$, then $D\cap E$ is necessarily a curve or it is empty, hence we should have $0\le DEh=b\le -1$, a contradiction. Thus $D=E+R$ for some divisor $R$ on $X$: since $D$ is smooth and integral, it follows that $R=0$, i.e. $D=E$.
\end{remark}

We are now ready to prove Theorem \ref{tSimplify} as an almost immediate consequence of Lemma \ref{lAO} and Proposition \ref{pTable}.
\medbreak
\noindent{\it Proof of Theorem \ref{tSimplify}.}
By applying Lemma \ref{lAO} using the values $h^q\big(F,\cE\otimes\cF_p)$ calculated in Proposition \ref{pTable} we obtain the complex $\cC^\bullet$ where
\begin{gather*}
\cC^{-1}:=\cO_F(-f)^{\oplus\beta+\gamma}\oplus\pi^*\Omega_{F}^1(f-\xi)^{\oplus\alpha},\\
\cC^0:=\cO_F(-f)^{\oplus\gamma}\oplus\pi^*\Omega_{\p2}^1(f)^{\oplus\alpha+\beta}\oplus\cO_F(-\xi)^{\oplus2\alpha},\\
\cC^1:=\cO_F^{\oplus2\alpha+\beta-2},
\end{gather*}

We have the two short exact sequences
\begin{equation}
\label{Display}
\begin{gathered}
0\longrightarrow \cK\longrightarrow \cC^0\longrightarrow\cC^{1}\longrightarrow0,\\
0\longrightarrow \cC^{-1}\longrightarrow \cK\longrightarrow\cE\longrightarrow0.
\end{gathered}
\end{equation}
Thus
\begin{equation}
\label{BoundDisplay}
h^i\big(F,\cE\otimes\mathcal L\big)\le \sum_{j=-1}^1h^{i-j}\big(F,\cC^{j}\otimes\mathcal L\big)
\end{equation}
for each $\mathcal L\in\Pic(F)$.

Taking $\mathcal L:=\cO_F(-2\xi)$, 
Propositions \ref{pLineBundle}, \ref{pOmega} and Inequality \eqref{BoundDisplay} imply
\begin{align*}
\gamma=h^1\big(F,\cE(-2\xi)\big)\le \alpha h^2\big(F,\pi^*\Omega_{F}^1(-3\xi+f)\big)=\alpha.
\end{align*}

Conversely, if $\cE$ is the cohomology $\cC^\bullet$, we have Sequences \eqref{Display}. Easy and tedious computations lead to the equalities
\begin{gather*}
c_1(\cE)=c_1(\cC^0)-c_1(\cC^1)-c_1(\cC^{-1})=0,\\
c_2(\cE)=c_2(\cC^0)-c_2(\cC^1)-c_2(\cC^{-1})-c_1(\cC^0)c_1(\cC^{1})+c_1(\cC^{1})^2=\alpha\xi^2+\beta f^2.
\end{gather*}
Moreover, Inequality \eqref{BoundDisplay} trivially still holds. 

Propositions \ref{pLineBundle}, \ref{pOmega} and Inequality \eqref{BoundDisplay} with $\mathcal L:=\cO_F$ imply $h^0\big(F,\cE\big)=0$. Using Equality \eqref{Serre}, one immediately checks that
$$
h^1\big(F,\cE(-h-E)\big)=h^1\big(F,\cE(-2\xi)\big)=h^2\big(F,\cE(-2f)\big).
$$
Argueing as in the previous case with $\mathcal L:=\cO_F(-2f)$ we deduce assertion (1) of the statement. 

Now let $D$ be either $0$, or any smooth element in $\vert a\xi+bf\vert$, $D\ne E$: thanks to Remark \ref{rSmooth} we then know that $a,b\ge0$. Again the same argument with 
$$
\mathcal L:=\cO_F(-h-D)\cong\cO_F(-(a+1)-(b+1)f)
$$
yields $h^1\big(F,\cE(-h-D)\big)=0$. If $D=0$, then we deduce that $\cE$ satisfies the instantonic condition, hence is an instanton, because it is assumed $\mu$--semistable. If $D\ne0$, we obtain assertion (2).
\qed
\medbreak

\begin{remark}
If $\cE$ is assumed to be earnest, then it is the cohomology of a monad of the form
\begin{align*}
0&\longrightarrow \cO_F(-f)^{\oplus\beta}\oplus\pi^*\Omega_{F}^1(f-\xi)^{\oplus\alpha}\longrightarrow\\
&\longrightarrow \pi^*\Omega_{\p2}^1(f)^{\oplus\alpha+\beta}\oplus\cO_F(-\xi)^{\oplus2\alpha}\longrightarrow  \cO_F^{\oplus2\alpha+\beta-2}\longrightarrow 0.
\end{align*}

Starting from the monad, one cannot hope to prove that $h^1\big(F,\cE(-2\xi)\big)=\gamma$. Indeed the integer $\gamma$ can be made as large as possible without affecting the cohomology of the monad, simply by adding  automorphisms of $\cO_F(-f)^{\oplus u}$ for an arbitrarily large integer $u$.
\end{remark}

\section{General results on instanton bundles on $F$}
In this section we deal with some general properties of instanton bundles on the blow up $F$ of $\p3$ at a point.

We first describe the families of lines on $F$.

\begin{remark}
\label{rLine}
Let $L\subseteq F$ be a line. If $a\xi^2+bf^2$ is its class in $A^2(F)$ we know that
$$
2a+b=(a\xi^2+f^2)h=1,\qquad a=(a\xi^2+f^2)f, \qquad b=(a\xi^2+f^2)E.
$$
Since $\cO_F(f)$ is globally generated it follows that $a\ge0$. If $L\not\subseteq E$, then $b\ge0$, hence $a=0$, $b=1$ and the class of $L$ is $f^2$. Let $L\subseteq E$, then it is cut out on the plane $E\subseteq\p8$ by a hyperplane, hence its class is $Eh=\xi^2-f^2=Ef$.

Thus there are two families of lines on $F$ and each such line $L$ is the complete intersection of two divisors. It follows that its ideal sheaf $\cI_{L\vert F}$ fits into a Koszul--type resolution.

In the former case $L$ is a fibre of the projection $\pi\colon F\to \p2$ and we have the exact sequence
\begin{equation*}
0\longrightarrow\cO_F(-2f)\longrightarrow\cO_F(-f)^{\oplus2}\longrightarrow\cI_{L\vert F}\longrightarrow0.
\end{equation*}
The restriction of the above sequence to $L$ yields the isomorphism $\cI_{L\vert F}/\cI_{L\vert F}^2\cong\cI_{L\vert F}\otimes\cO_L\cong \cO_{\p1}^{\oplus2}$, hence $\cN_{L\vert F}\cong\cO_{\p1}^{\oplus2}$.
Let $\Lambda_\pi\subseteq\Lambda$ be the locus corresponding to these lines.

In the latter case $L$ is a line inside $E$ and we have the exact sequence 
\begin{equation*}
0\longrightarrow\cO_F(-\xi)\longrightarrow\cO_F(-f)\oplus\cO_F(-\xi+f)\longrightarrow\cI_{L\vert F}\longrightarrow0.
\end{equation*}
Argueing as above $\cN_{L\vert F}\cong\cO_{\p1}(-1)\oplus\cO_{\p1}(1)$. If $\Lambda_E$ is the locus of such lines in the Hilbert scheme $\Lambda$, then $\dim(\Lambda_E)=h^0\big(L,\cN_{L\vert F}\big)=2$ and $h^1\big(L,\cN_{L\vert F}\big)=0$. Thus $\Lambda_E$ is a smooth surface dominated by $E^\vee\cong\p2$, hence it is actually isomorphic to it by a theorem of Castelnuovo. In fact, using a similar approach, one can easily check that the Hilbert scheme of lines of any Fano threefold with $i_X\le2$ containing a plane $E$ always has $E^\vee	\cong\p2$ as a component.

It follows that $\Lambda=\Lambda_\pi\cup \Lambda_E$ where $\Lambda_\pi\cap\Lambda_E=\emptyset$ and $\Lambda_\pi\cong \Lambda_E\cong\p2$. Thus there are two ways of taking a general $L\in\Lambda$. 
\end{remark}

Thanks to the above remark, if $\cE$ is an instanton bundle which is generically trivial on $\Lambda_E$, then $\cE\otimes\cO_L\cong\cO_{\p1}^{\oplus2}$ on the general line $L\subseteq E$, hence $\cE\otimes\cO_E$ is $\mu$--semistable thanks to \cite[Lemma II.2.2.1]{O--S--S}. In particular, in this case, assertions (1), (2) and (3) in Lemma \ref{lEarnest} are actually equivalent.

Our first result is the description of minimal instanton bundles. 

\begin{proposition}
\label{pMinimal}
A bundle $\cE$ of rank $2$ on $F$ is a minimal instanton bundle if and only if $\cE(h)$ is Ulrich.

Moreover, every minimal instanton bundle  $\cE$ on $F$ is generically trivial and earnest.
\end{proposition}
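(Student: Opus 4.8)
The plan is to prove the two assertions of Proposition \ref{pMinimal} in sequence, starting from the characterization of minimality via the Ulrich condition. For a minimal instanton bundle $\cE$ on $F$ we have $i_F=2$ and $q_F=1$, so Corollary \ref{cCharge} gives $c_2(\cE)h=2$, and Remark \ref{rMinimal} already records that this forces $h^i\big(F,\cE(\lambda h)\big)=0$ for $-2\le\lambda\le0$ and all $i\ge0$. Concretely, minimality means $2\alpha+\beta=2$ by the computation $c_2(\cE)h=(\alpha\xi^2+\beta f^2)(\xi+f)=2\alpha+\beta$. First I would verify that these vanishings are exactly the defining conditions of an Ulrich bundle for $\cE(h)$, namely $h^i\big(F,\cE(h)((1-i)h)\big)=0$ for $i>0$ and $h^j\big(F,\cE(h)(-jh)\big)=0$ for $j<3$; translating the shifts, these are all of the form $h^i\big(F,\cE(\lambda h)\big)=0$ with $\lambda\in\{-2,-1,0\}$, which is precisely the range produced by minimality. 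For the converse, if $\cE(h)$ is Ulrich then $\chi(\cE(\lambda h))=0$ in the relevant range, and together with $\mu$-semistability and $h^0\big(F,\cE\big)=0$ one recovers that $\cE$ is an instanton of minimal charge via Equality \eqref{RRBlowUp} and Corollary \ref{cCharge}.

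Next I would turn to the geometric assertion that every minimal instanton bundle is generically trivial and earnest. By Corollary \ref{cSimplify}, earnestness is equivalent to the single vanishing $h^1\big(F,\cE(-2\xi)\big)=0$, i.e. $\gamma=0$, so the whole earnestness claim reduces to showing $\gamma=0$ for minimal bundles. Here I would invoke Corollary \ref{cChargeBlow}, which gives $\alpha\ge0$, $\alpha+\beta\ge0$ and $2\alpha+\beta\ge2$; combined with the minimality relation $2\alpha+\beta=2$ this leaves only finitely many possibilities for $(\alpha,\beta)$. The natural strategy is to enumerate these cases: either $\alpha=0,\beta=2$, or $\alpha=1,\beta=0$ (larger $\alpha$ would force $\beta<0$ while still satisfying $\alpha+\beta\ge0$, so I would check that $\alpha\le1$). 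In each case $\gamma\le\alpha$ by Theorem \ref{tSimplify}, so when $\alpha=0$ we get $\gamma=0$ immediately; the only delicate case is $\alpha=1,\beta=0$, where I would need an extra argument to rule out $\gamma=1$.

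For generic triviality I would use Lemma \ref{lEarnest} together with the discussion following Remark \ref{rLine}: since $F$ has exactly two families of lines, $\Lambda_\pi\cong\Lambda_E\cong\p2$, I must control the splitting type on a general line of each family. On $\Lambda_E$ the relevant vanishing is $h^1\big(F,\cE(-E-h)\big)=h^1\big(F,\cE(-2\xi)\big)=\gamma$, so once $\gamma=0$ is established, Lemma \ref{lEarnest} yields generic triviality on $\Lambda_E$ and the $\mu$-semistability of $\cE\otimes\cO_E$. For $\Lambda_\pi$, where the general line is a fibre of $\pi$ with $\cN_{L\vert F}\cong\cO_{\p1}^{\oplus2}$, I would restrict $\cE$ using the Koszul resolution of $\cI_{L\vert F}$ from Remark \ref{rLine} and the natural-cohomology vanishings from Proposition \ref{pNatural} to force $\cE\otimes\cO_L\cong\cO_{\p1}^{\oplus2}$ for general $L$.

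The main obstacle I expect is pinning down $\gamma=0$ in the single exceptional case $\alpha=1,\beta=0$: the bound $\gamma\le\alpha=1$ from Theorem \ref{tSimplify} does not by itself exclude $\gamma=1$, so I anticipate needing to exploit the Ulrich property of $\cE(h)$ more forcefully. Concretely, since $h^1\big(F,\cE(-2\xi)\big)=h^2\big(F,\cE(-2f)\big)$ by Serre duality and the note after Theorem \ref{tSimplify}, I would try to deduce $h^2\big(F,\cE(-2f)\big)=0$ from the strong cohomological vanishing forced by minimality — perhaps by restricting to a general plane $E'\in\vert f\vert$ where $\mu$-semistability of $\cE\otimes\cO_{E'}$ (via \cite[Theorem 3.1]{Ma}) controls the relevant $H^2$. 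If that restriction argument closes the gap, the remaining steps are routine cohomology chases using the monad $\cC^\bullet$ of Theorem \ref{tSimplify}.
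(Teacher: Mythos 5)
Your first paragraph is essentially the paper's own argument for the equivalence: the forward direction is Remark \ref{rMinimal}, and the converse uses the Ulrich vanishings plus $\mu$--semistability (which the paper gets from \cite{C--G--H--S}). One caveat even there: the definition of instanton requires $c_1(\cE)=0$ as a divisor class, while the Ulrich vanishings and Riemann--Roch only force $c_1(\cE)h^2=0$, which on $F$ leaves the possibilities $c_1(\cE)=\lambda(3\xi-4f)$; you never rule out $\lambda\ne0$. The paper disposes of this, and of everything else, by invoking the classification of rank two Ulrich bundles on the degree--$7$ del Pezzo threefold from \cite{C--F--M3}: any such $\cE$ is either $\pi^*\cV$ with $\cV$ $\mu$--stable on $\p2$, $c_1(\cV)=0$, $c_2(\cV)=2$, or $\sigma^*\cU$ with $\cU$ a null--correlation bundle on $\p3$. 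This classification is the engine of the whole proof, and your proposal never uses it.

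That omission produces three concrete failures in your second part. First, the enumeration of minimal charges is incomplete: Corollary \ref{cChargeBlow} gives $\alpha\ge0$, $\alpha+\beta\ge0$, so with $2\alpha+\beta=2$ the pair $(\alpha,\beta)=(2,-2)$ survives all the numerical constraints (with $\gamma=2$ forced by $\beta+\gamma\ge0$ and $\gamma\le\alpha$); you say you "would check $\alpha\le1$" but there is no numerical route to this --- it is exactly the classification that excludes it. Second, your rescue of the case $(\alpha,\beta)=(1,0)$ is unsound: a general member of $\vert f\vert$ is $\pi^{-1}(\ell)\cong\bF_1$, not a plane (the paper shows $E\in\vert\xi-f\vert$ is the \emph{only} plane in $F$), and $\cO_F(f)$ is not ample (it has degree zero on the fibres of $\pi$), so the restriction theorem of \cite[Theorem 3.1]{Ma} does not apply to $\vert f\vert$; no cohomology chase from Theorem \ref{tSimplify} or Proposition \ref{pTable} yields $\gamma=0$ here, whereas the paper computes $h^1\big(F,\cE(-2\xi)\big)=0$ directly from $\cE\cong\sigma^*\cU$. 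Third, generic triviality on $\Lambda_\pi$ when $\alpha=1$: your proposed Koszul chase through $0\to\cO_F(-2f)\to\cO_F(-f)^{\oplus2}\to\cI_{L\vert F}\to0$ bounds $h^0\big(L,\cE(-\xi)\otimes\cO_L\big)$ by $h^1\big(F,\cE(-h)\big)$ and $h^2\big(F,\cE(-\xi-2f)\big)=h^1\big(F,\cE(-\xi)\big)=\alpha$ (Table 1), i.e.\ by $1$, not $0$; the paper instead uses that for $\cE\cong\sigma^*\cU$ the lines in $\Lambda_\pi$ are strict transforms of lines through $P$, on which a general null--correlation bundle is trivial. (Indeed the paper explicitly remarks that Grauert--M\"ulich--type arguments are unavailable for $i_X\le2$.) So the proposal has a genuine gap precisely at the assertions that make the proposition nontrivial, and the missing idea is the classification result of \cite{C--F--M3}.
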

\begin{proof}
In Remark \ref{rMinimal} we already checked that for each minimal instanton bundle $\cE$ on $F$, then $\cE(h)$ is Ulrich. 

Conversely, let $\cE(h)$ be an Ulrich bundle of rank $2$ on $F$. By definition we already know that $h^0\big(F,\cE\big)=h^1\big(F,\cE(-h)\big)=0$ and that $\cE$ is $\mu$--semistable (see \cite{C--G--H--S}). Thanks to the classification in \cite{C--F--M3} we know that $\cE$ is either $\pi^*\cV$, where $\cV$ is a $\mu$--stable bundle on $\p2$ with $c_1(\cV)=0$ and $c_2(\cV)=2$, or $\sigma^*\cU$, where $\cU$ is a null--correlation bundle on $\p3$.

In the first case let $\widehat{\pi}$ be the restriction of $\pi$ to $E$. It follows that $\cE\otimes\cO_E\cong\widehat{\pi}^*\cV$. Since $\pi$ is induced by $\cO_F(f)$, it follows that $\widehat{\pi}$ is the isomorphism induced by $\cO_F(f)\otimes\cO_E$. Thus the $\mu$--stability of $\cV$ with respect to $\cO_{\p2}(1)$ yields the $\mu$--stability of $\cE\otimes\cO_E$ with respect to $\cO_F(f)\otimes\cO_E\cong\widehat{\pi}^*\cO_{\p2}(1)$.

In particular $\cE$ is generically trivial on $\Lambda_E$ thanks to \cite[Corollary 2 of Theorem II.2.1.4]{O--S--S}. Moreover, $\cV$ is $\mu$--stable, hence
$$
h^1\big(F,\cE(-2\xi)\big)=h^2\big(F,\cE(-2f)\big)=h^2\big(\p2,\cV(-2)\big)=h^0\big(\p2,\cV(-1)\big)=0.
$$
thanks to Equality \eqref{Serre} and  \cite[Exercises III.8.1, III.8.3 and III.8.4]{Ha2}. It follows that $\cE$ is earnest thanks to Corollary \ref{cSimplify}. Finally, if $L\in\Lambda_\pi$ is general, then  $L=\pi^{-1}(x)$ for a general point $x\in\p2$. Thus, $\cO_L\cong\pi^*\bC_x$, where $\bC_x$ is the  skyscraper sheaf supported on $x$, hence
$$
\cE\otimes\cO_L\cong\pi^*(\cV\otimes \bC_x)\cong\pi^*(\bC_x^{\oplus2})\cong\cO_{\p1}^{\oplus2}.
$$

Now let $\cE\cong\sigma^*\cU$ be the pull--back of a general null--correlation bundle $\cU$ on $\p3$: thus $c_2(\cE)=\xi^2$, whence $c_2(\cE)h=2$, i.e. $\cE$ is an instanton bundle with minimal charge. Since $E=\sigma^{-1}(P)$, it follows that $\cO_E\cong\sigma^*\bC_P$, where $\bC_P$ is the  skyscraper sheaf supported on $P$. Thus
$$
\cE\otimes\cO_E\cong\sigma^*(\cU\otimes \bC_P)\cong\cO_{\p2}^{\oplus2},
$$
hence $\cE\otimes\cO_L\cong\cO_{\p1}^{\oplus2}$ for each $L\in\Lambda_E$, i.e. $\cE$ is generically trivial on $\Lambda_E$. Moreover, $h^1\big(F,\cE(-2\xi)\big)=0$, hence $\cE$ is earnest thanks to Corollary \ref{cSimplify}. 

If $\ell$ is a general line through $P$, then $\cU\otimes\cO_\ell\cong\cO_{\p1}^{\oplus2}$. The strict transforms of the lines through $P$ are the lines $L\in\Lambda_\pi$. The properties of the blow up map then imply that $\sigma_{\vert L}\colon L\to\ell$ is the blow up of $\ell$ at the point $P$, hence it is an isomorphism: in particular $\sigma^*\cO_\ell\cong\cO_L$, whence $\cE\otimes\cO_L\cong\sigma^*(\cU\otimes\cO_{\ell})\cong\cO_{\p1}^{\oplus2}$: i.e. $\cE$ is generically trivial on $\Lambda_\pi$ too. 
\end{proof}

On the one hand, by definition, instanton bundles on Fano threefolds $F$ with $\varrho_F=1$ are $\mu$--stable as pointed out in Example \ref{eFanoCyclic}. On the other hand, we showed in the same example that there exist strictly $\mu$--semistable instanton bundles on threefolds $F\subseteq\p7$ of degree $6$ and $i_F=2$. It is then natural to deal with the $\mu$--stability of instanton bundles using the following helpful lemma.

\begin{lemma}
\label{lHoppe}
Let $\cG$ be a rank $2$ vector bundle on $F$.

Then $\cG$ is $\mu$--stable (resp. $\mu$--semistable) if and only if  $h^0\big(F,\cG(-a\xi-b f)\big)=0$ for each $a,b \in\bZ$ such that $4a+3b \ge\mu(\cG)$ (resp. $>\mu(\cG)$).
\end{lemma}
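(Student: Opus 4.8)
The plan is to prove this as a Hoppe-type criterion for $\mu$-(semi)stability, adapted to the specific Picard group and intersection theory of $F$. The key structural fact is that $\Pic(F)$ is freely generated by the classes $\xi$ and $f$, so every line bundle on $F$ is of the form $\cO_F(a\xi+bf)$, and the slope of a rank-$1$ subsheaf is governed by the single integer $4a+3b$ since $h=\xi+f$ and one computes $(a\xi+bf)h^2 = 4a+3b$. The idea is that $\mu$-stability of a rank-$2$ bundle $\cG$ is tested only against rank-$1$ saturated subsheaves, and that such a subsheaf gives a nonzero section of a suitable twist. The main work is to make this reduction precise.

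First I would recall the general principle: for a rank-$2$ bundle $\cG$, $\mu$-stability (resp. $\mu$-semistability) fails precisely when there is a rank-$1$ subsheaf $\cM\subseteq\cG$ with $\mu(\cM)\ge\mu(\cG)$ (resp. $>\mu(\cG)$), and one may assume $\cM$ is saturated, hence reflexive of rank $1$ on the smooth threefold $F$, hence a line bundle $\cM\cong\cO_F(a\xi+bf)$. The inclusion $\cO_F(a\xi+bf)\hookrightarrow\cG$ is then the same datum as a nonzero global section of $\cG(-a\xi-bf)=\cG\otimes\cO_F(-a\xi-bf)$. Conversely, any nonzero section of $\cG(-a\xi-bf)$ produces a rank-$1$ subsheaf of slope at least $\mu(\cO_F(a\xi+bf))=4a+3b$. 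Thus the destabilizing condition $\mu(\cM)\ge\mu(\cG)$ translates exactly into the numerical inequality $4a+3b\ge\mu(\cG)$ together with the nonvanishing $h^0\big(F,\cG(-a\xi-bf)\big)\ne0$, and similarly with strict inequality for the semistable case.

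The forward direction is then immediate: if some $h^0\big(F,\cG(-a\xi-bf)\big)\ne0$ with $4a+3b\ge\mu(\cG)$, the resulting section gives a destabilizing subsheaf, contradicting $\mu$-stability; and symmetrically for semistability. For the converse I would argue that if $\cG$ is not $\mu$-stable, take a saturated rank-$1$ destabilizing subsheaf, observe it is a line bundle $\cO_F(a\xi+bf)$ with $4a+3b\ge\mu(\cG)$, and note the defining inclusion yields the required nonzero section. The only subtlety is the passage from an arbitrary destabilizing subsheaf to a \emph{saturated} one: one replaces $\cM$ by its saturation $\cM'$ inside $\cG$, which has slope $\mu(\cM')\ge\mu(\cM)$ and is reflexive of rank $1$ on a smooth variety, hence invertible. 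This guarantees both that the subsheaf is a genuine line bundle and that the slope inequality is preserved.

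The main obstacle is the clean justification that a saturated rank-$1$ subsheaf of a locally free sheaf on the smooth threefold $F$ is a line bundle, and that saturation does not decrease the slope; this is where I would invoke the standard facts that a saturated subsheaf of a torsion-free sheaf has torsion-free quotient and that a reflexive rank-$1$ sheaf on a smooth variety is invertible. Everything else is a direct dictionary between sections, subsheaves, and the explicit slope formula $\mu(\cO_F(a\xi+bf))=4a+3b$, so the proof should be short once the saturation step is in place.
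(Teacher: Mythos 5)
Your proof is correct. It differs from the paper's in an instructive way: the paper disposes of the lemma in one line, observing that $\Pic(F)$ is freely generated by $\xi$ and $f$ and then citing the generalized Hoppe criterion of Jardim--Menet--Prata--S\'a Earp (their Corollary 4), whereas you reprove that criterion from scratch in the rank-$2$ case. Your chain of reductions is exactly the standard one underlying the cited result: a destabilizing rank-$1$ subsheaf may be replaced by its saturation, which does not decrease the slope (the quotient by the saturation is torsion, so its $c_1$ pairs non-negatively with $h^2$); the saturation is the kernel of a map from a locally free sheaf to a torsion-free one, hence reflexive, and a rank-$1$ reflexive sheaf on a smooth (hence locally factorial) variety is invertible, so it is $\cO_F(a\xi+bf)$ with slope $(a\xi+bf)h^2 = 4a+3b$; finally, inclusions $\cO_F(a\xi+bf)\hookrightarrow\cG$ correspond to nonzero sections of $\cG(-a\xi-bf)$, a nonzero map from a line bundle to a torsion-free sheaf being automatically injective. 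What the paper's route buys is brevity and generality (the reference handles arbitrary rank via exterior powers); what yours buys is self-containedness and transparency about precisely which facts are used, at the modest cost of having to justify the saturation and reflexivity steps explicitly --- which you do correctly. The one point worth stating rather than leaving implicit is the computation $(a\xi+bf)h^2=4a+3b$, which follows from $h=\xi+f$ and the relations $\xi^3=\xi^2f=\xi f^2$ (class of a point), $f^3=0$ in $A(F)$; you assert it, and it is right.
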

\begin{proof}
The group $\Pic(F)$ is generated by the classes of $\xi$ and $f$, hence it suffices to apply \cite[Corollary 4]{J--M--P--S}.
\end{proof}

The following result could provide some evidence that instanton bundles on $F$ should be $\mu$--stable.

\begin{proposition}
\label{pStable}
Let $\cE$ be an instanton bundle with charge $\alpha\xi^2+\beta f^2$ on $F$.

If $\alpha\le14$, then $\cE$ is $\mu$--stable.
\end{proposition}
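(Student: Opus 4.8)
The plan is to use the Hoppe-type criterion of Lemma \ref{lHoppe} together with the hypotheses already built into an instanton bundle. Since $i_F=2$ we have $c_1(\cE)=0$, so $\mu(\cE)=0$, and as $\cE$ is $\mu$--semistable we already know $h^0\big(F,\cE(-a\xi-bf)\big)=0$ whenever $4a+3b>0$. Thus $\cE$ fails to be $\mu$--stable if and only if $h^0\big(F,\cE(-a\xi-bf)\big)\ne0$ for some pair on the boundary $4a+3b=0$. Because $\gcd(3,4)=1$, these pairs are exactly $(a,b)=(3t,-4t)$ with $t\in\bZ$, and the case $t=0$ is ruled out by $h^0\big(F,\cE\big)=0$. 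So I would assume a nonzero section exists for some $t\ne0$ and derive a contradiction when $\alpha\le14$.

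First I would promote such a section to a clean Serre-type sequence. A nonzero $s\in H^0\big(F,\cE(-3t\xi+4tf)\big)$ gives a map $\cO_F(3t\xi-4tf)\to\cE$, and I claim it cannot vanish along a divisor. Indeed, a divisorial vanishing locus $D$ would yield a saturated sub--line bundle $\cO_F(3t\xi-4tf+D)\hookrightarrow\cE$ of slope $\deg(D)$; but the only effective divisor of slope $0$ on $F$ is the trivial one (if $\cO_F(c\xi+df)$ is effective then $c\ge0$ and $c+d\ge0$, which together with $4c+3d=0$ forces $c=d=0$), so $D\ne0$ would produce a destabilizing subsheaf of positive slope, contradicting $\mu$--semistability. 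Hence the zero locus $Z$ has pure codimension $2$ (or is empty) and I obtain
$$
0\longrightarrow \cO_F(3t\xi-4tf)\longrightarrow\cE\longrightarrow\cI_{Z\vert F}(-3t\xi+4tf)\longrightarrow0.
$$

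Next I would read off Chern classes. From the sequence $c_2(\cE)=(3t\xi-4tf)(-3t\xi+4tf)+[Z]$, so using $\xi^2=\xi f$ and $f^3=0$ one gets
$$
[Z]=c_2(\cE)+(3t\xi-4tf)^2=(\alpha-15t^2)\xi^2+(\beta+16t^2)f^2.
$$
The decisive point is that $Z$ is an effective $1$--cycle, so $[Z]$ lies in the cone of effective curves $\overline{\mathrm{NE}}(F)$. By Remark \ref{rLine} this cone is generated by the two classes of lines $f^2$ and $\xi^2-f^2$ (the extremal rays of the two contractions $\pi$ and $\sigma$). Rewriting $[Z]=(\alpha-15t^2)(\xi^2-f^2)+(\alpha+\beta+t^2)f^2$, effectivity forces $\alpha-15t^2\ge0$ (the second coefficient is automatically non--negative by Corollary \ref{cChargeBlow}). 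Since $t\ne0$ gives $t^2\ge1$, this yields $\alpha\ge15t^2\ge15$, contradicting $\alpha\le14$; hence no such section exists and $\cE$ is $\mu$--stable.

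The main obstacle I anticipate is the reduction from an arbitrary destabilizing global section to the Serre sequence with $Z$ of pure codimension $2$: one must be certain the relevant map is a saturated inclusion, which is precisely where the absence of nontrivial effective divisors of slope $0$ (combined with $\mu$--semistability) is used. The only other external input is the description of $\overline{\mathrm{NE}}(F)$, which is immediate here because $F$ has Picard rank $2$ and its two extremal rays are spanned by the line classes $f^2$ and $\xi^2-f^2$ of Remark \ref{rLine}. Everything else is routine intersection--theoretic bookkeeping in $A(F)$, and I note that the threshold $14=15-1$ is exactly the largest $\alpha$ below the minimal possible value $15t^2$.
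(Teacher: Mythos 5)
Your proposal is correct and follows essentially the same route as the paper: Lemma \ref{lHoppe} reduces $\mu$--stability to the boundary pairs $(a,b)=(3t,-4t)$, the divisorial part of the zero locus is killed by $\mu$--semistability together with the description of effective divisors on $F$, and the resulting Serre sequence gives $[Z]=(\alpha-15t^2)\xi^2+(\beta+16t^2)f^2$, forcing $\alpha\ge 15t^2\ge 15$. The only (immaterial) difference is the last step: you invoke the cone of curves generated by $f^2$ and $\xi^2-f^2$, while the paper simply intersects $Z$ with a general member of $\vert f\vert$ — both amount to the nefness inequality $Zf=\alpha-15t^2\ge 0$.
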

\begin{proof}
Let $\cE$ be a strictly $\mu$--semistable with $c_2(\cE)=\alpha\xi^2+\beta f^2$. 
Lemma \ref{lHoppe} yields that $h^0\big(F,\cE(-a\xi-bf)\big)\ne0$ for some integers $a,b$ such that $4a+3b=\mu(\cE)=0$: In particular there is $\lambda\in\bZ\setminus \{\ 0\ \}$ such that $a=3\lambda$, $b=-4\lambda$.

Thus there is a non--zero section $s\in H^0\big(F,\cE(-3\lambda\xi+4\lambda f)\big)$: the zero locus of $s$ is the union of a possibly empty subscheme $C\subseteq F$ of pure dimension $1$ and of a divisor $S\subseteq F$ which is either zero or effective. Thus $H^0\big(F,\cE(-3\lambda\xi+4\lambda f-S)\big)$ must contain a non--zero section vanishing exactly along $C$, hence we would have an injective map  $\cO_S(a\xi+bf+S)\to\cE$: it follows that $0\le Sh^2=(3\lambda\xi-4\lambda f+S)h^2\le\mu(\cE)=0$, i.e. $S=0$. 

In particular we have an exact sequence of the form
\begin{equation*}
 0 \longrightarrow\cO_F(3\lambda\xi-4\lambda f) \longrightarrow \cE \longrightarrow \cI_{C\vert F}(-3\lambda\xi+4\lambda f) \longrightarrow 0.
\end{equation*}
A simple computation shows that $C$ has class $(\alpha-15\lambda^2)\xi^2+(\beta+16\lambda^2)f^2$ in $A^2(F)$. Thus, for a general line $\ell\subseteq\p2$, the surface $F_0=\pi^{-1}(\ell)\in \vert f\vert$  intersects $C$ in a $0$--dimensional scheme of degree $\alpha-15\lambda^2\ge0$, hence $\alpha\ge15$.
\end{proof}

We close this section by proving that in our case it is not possible to construct instanton bundles by taking suitable extensions of line bundles on $F$ as in \cite{M--M--PL}. 

\begin{proposition}
\label{pExtension}
No instanton $\cE$ on $F$ can fit in an extension of the form
\begin{equation}
\label{seqExtension}
 0 \longrightarrow\cO_F(-a\xi-bf) \longrightarrow \cE \longrightarrow \cO_F(a\xi+bf) \longrightarrow 0.
\end{equation}
\end{proposition}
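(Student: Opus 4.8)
The plan is to assume for contradiction that an instanton bundle $\cE$ fits into a sequence of the form \eqref{seqExtension} for some $a,b\in\bZ$, and to trap the pair $(a,b)$ in an impossible position by playing the numerical constraints of Corollary \ref{cChargeBlow} against the instantonic condition. First I would read the Chern classes off \eqref{seqExtension}. Since $c(\cE)=c(\cO_F(-a\xi-bf))\,c(\cO_F(a\xi+bf))$, one gets $c_1(\cE)=0$ (as it must be, because $i_F=2$) and $c_2(\cE)=-(a\xi+bf)^2$. Using the relation $\xi^2=\xi f$ in $A(F)$, this becomes $c_2(\cE)=-(a^2+2ab)\xi^2-b^2f^2$, so in the notation $c_2(\cE)=\alpha\xi^2+\beta f^2$ we have $\alpha=-(a^2+2ab)$ and $\beta=-b^2$.

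Next I would exploit the inequalities of Corollary \ref{cChargeBlow}. The decisive one is $\alpha+\beta\ge0$: here $\alpha+\beta=-(a+b)^2$, so this forces $a+b=0$, i.e. $b=-a$. Substituting back gives $\alpha=a^2$, $\beta=-a^2$, hence $2\alpha+\beta=a^2$, and the bound $2\alpha+\beta\ge2$ yields $a^2\ge2$, that is $\vert a\vert\ge2$. Moreover, $\mu$--semistability applied to the sub--line bundle $\cO_F(-a\xi-bf)=\cO_F(-aE)$ (recall that $E$ is the unique member of $\vert\xi-f\vert$) forces its slope $-a$ to satisfy $-a\le\mu(\cE)=0$, so $a\ge0$. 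Altogether the only surviving possibility is $b=-a$ and $a\ge2$, so that \eqref{seqExtension} becomes
\begin{equation*}
0\longrightarrow\cO_F(-aE)\longrightarrow\cE\longrightarrow\cO_F(aE)\longrightarrow0,\qquad a\ge2.
\end{equation*}

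Finally I would contradict the instantonic condition $h^1\big(F,\cE(-h)\big)=0$. Twisting the displayed sequence by $\cO_F(-h)=\cO_F(-\xi-f)$ realizes $\cE(-h)$ as an extension of $\cO_F((a-1)\xi-(a+1)f)$ by $\cO_F(-(a+1)\xi+(a-1)f)$. Using Proposition \ref{pLineBundle} I would check that the quotient $\cO_F((a-1)\xi-(a+1)f)$ has $h^0=h^1=0$, while the sub $\cO_F(-(a+1)\xi+(a-1)f)$ has $h^1=\sum_{j=0}^{a-1}\binom{a-j}{2}=\binom{a+1}{3}$. The long exact cohomology sequence then collapses to an isomorphism $H^1\big(F,\cE(-h)\big)\cong H^1\big(F,\cO_F(-(a+1)\xi+(a-1)f)\big)$, whence $h^1\big(F,\cE(-h)\big)=\binom{a+1}{3}\ge1$ for every $a\ge2$, contradicting the instantonic condition and ruling out all remaining cases. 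The part requiring the most care is the numerical reduction in the middle step — keeping the relations $\xi^2=\xi f$, $f^3=0$ straight and verifying that the charge inequalities really force $b=-a$ together with $a\ge2$ — and then the bookkeeping of the line--bundle cohomology groups in the last step, all of which are nonetheless controlled explicitly by Proposition \ref{pLineBundle}.
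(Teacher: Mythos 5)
Your proof is correct, and it streamlines the paper's argument in one genuine respect: you avoid the split/non--split case distinction entirely. The paper first treats the split case separately (both summands are subsheaves, so $\mu$--semistability forces $4a+3b=0$, i.e. $a=3\lambda$, $b=-4\lambda$, which is then excluded by $h^0$ or by $h^1\big(F,\cE(-h)\big)\ne0$), and in the non--split case it extracts the sign constraints $a\ge1$, $b\le-1$ from the non--vanishing of the extension group $h^1\big(F,\cO_F(-2a\xi-2bf)\big)\ne0$ via Proposition \ref{pLineBundle}. You instead pin the sign of $a$ by testing $\mu$--semistability directly against the sub--line bundle $\cO_F(-a\xi-bf)=\cO_F(-aE)$ of slope $-a$ (using $Eh^2=1$), which exists whether or not the sequence splits, so a single argument covers both cases. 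The remaining skeleton is the same as the paper's: Corollary \ref{cChargeBlow} gives $\alpha+\beta=-(a+b)^2\ge0$, hence $b=-a$, and $2\alpha+\beta=a^2\ge2$; then twisting by $\cO_F(-h)$ and applying Proposition \ref{pLineBundle} contradicts the instantonic condition. You even sharpen the last step: since the twisted quotient has $h^0=h^1=0$, you get the isomorphism $H^1\big(F,\cE(-h)\big)\cong H^1\big(F,\cO_F(-(a+1)\xi+(a-1)f)\big)$ of dimension $\binom{a+1}{3}$, where the paper only records positivity via an inequality. One small caution in the bookkeeping: the binomial coefficients in Proposition \ref{pLineBundle} must be read with the convention that they vanish whenever the top entry is smaller than $2$ (as the proof via $h^i\big(\p2,\cO_{\p2}(b+j)\big)$ dictates), e.g. $\binom{1-a+j}{2}=0$ in your computation of $h^0$ of the quotient; you used this correctly, but the polynomial convention $\binom{m}{2}=m(m-1)/2$ would give wrong nonzero values there.
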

\begin{proof}
Assume that $\cE$ fitting in Sequence \eqref{seqExtension} actually exists. 

If the sequence splits, then
$$
\cE\cong\cO_F(-a\xi-bf) \oplus \cO_F(a\xi+bf).
$$
Since $\cE$ is $\mu$--semistable, it follows that both $\mu( \cO_F(a\xi+bf))$ and $\mu( \cO_F(-a\xi-bf))$ are not positive. Thus,
$$
4a+3b=\mu( \cO_F(a\xi+bf))=-\mu( \cO_F(-a\xi-bf))=0,
$$
hence there is $\lambda\in\bZ$ such that $a=3\lambda$, $b=-4\lambda$. We have $\lambda\ne0$, otherwise $h^0\big(F,\cE\big)=2$, contradicting the definition of instanton. If $\lambda\ne0$, we can assume that $\lambda\ge1$, hence Proposition \ref{pLineBundle} implies
$$
h^1\big(F,\cE(-h)\big)\ge h^1\big(F,\cO_F((-3\lambda-1)\xi+(4\lambda-1)f)\big)\ne0,
$$
again a contradiction.

From now on we will assume that Sequence \eqref{seqExtension} does not split: it follows that $h^1\big(F,\cO_F(-2a\xi-2bf)\big) \ne 0$. By Proposition \ref{pLineBundle}, this implies that $2a \ge 2$ and $2b\le-1$, which implies that $a \ge 1$ and $b \le -1$.

From Sequence \eqref{seqExtension}, it can be quickly seen that $c_2(\cE) = -b^2 f^2 - (2ab + a^2) \xi^2$. Thanks to Corollary \ref{cChargeBlow} we deduce that $-(b^2 +2ab + a^2)$ is non--negative, hence $b=-a$. The same proposition also implies $-4ab-2a^2-b^2\ge2$, hence $a\ge 2$.

The cohomology of Sequence \eqref{seqExtension} tensored by $\cO_F(-h)$, the above restriction on $a$ and $b$  then yields $h^0\big(F,\cO_F((a-1)\xi+(-a-1)f)\big)=0$, thanks to Proposition \ref{pLineBundle}, hence
$$
h^1\big(F,\cE(-h)\big)\ge h^1\big(F,\cO_F((-a-1)\xi+(a-1)f)\big).
$$
Again Proposition \ref{pLineBundle} implies that the dimension on the left is positive because $a-1\le-3$ and $-a-1\ge1$, hence $\cE$ cannot be an instanton.
\end{proof}

\section{The proof of Theorem \ref{tInstanton}}
\label{sProof}
In this section we will prove Theorem \ref{tInstanton}, showing the existence of earnest instanton bundles on $F$. To this purpose we will construct irreducible families of instanton bundles from very reducible curves using Theorem \ref{tSerre}. First we introduce an important family of conics on $F$.

\begin{remark}
\label{rConic}
Let us consider the curves $C$ in $F$ which are inverse images via $\sigma$ of lines not containing the blown up point $P$. Their class in $A^2(F)$ is $\xi^2$, hence $C$ is a conic and the ideal sheaf $\cI_{C\vert F}$ has the Koszul--type resolution
\begin{equation*}
0\longrightarrow\cO_F(-2\xi)\longrightarrow\cO_F(-\xi)^{\oplus2}\longrightarrow\cI_{C\vert F}\longrightarrow0.
\end{equation*}
In particular $\cN_{C\vert F}\cong\cO_{\p1}(1)^{\oplus2}$.

Let $\Gamma$ be the locus in the Hilbert scheme of conics corresponding to such curves. By construction $\Gamma$ is isomorphic to an open subset of the grassmannian of lines in $\p3$, hence it is irreducible of dimension $4$.
\end{remark}

We are now ready to prove Theorem \ref{tInstanton}. We start by explaining how to construct some particular instanton on $F$.

\begin{construction}
\label{conInstanton}
Let $\alpha\ge0$ and $\beta\ge0$ integers such that and $2\alpha+\beta\ge2$. 

Let $L_1,\dots,L_{\beta+1}\in\Lambda_\pi$ and $C_1,\dots, C_{\alpha}\in \Gamma$ be general curves. Notice that $C_i\cong L_j\cong \p1$ and we can assume such curves pairwise disjoint, due to the definition of $\Gamma$ and $\Lambda_\pi$, because both $\cO_F(\xi)$ and $\cO_F(f)$ are globally generated. We define 
\begin{equation}
\label{XInstanton}
X:=\bigcup_{i=1}^{\alpha}C_i\cup\bigcup_{j=1}^{\beta+1}L_j\subseteq F.
\end{equation}

We claim that $\det(\cN_{X\vert F})\cong\cO_F(2f)\otimes\cO_X$. Such an isomorphism can be checked component by component. Thanks to Remarks \ref{rLine} and \ref{rConic} we have 
\begin{gather*}
\det(\cN_{X\vert F})\otimes\cO_{C_i}\cong\cO_{\p1}(2)\cong\cO_F(2f)\otimes\cO_{C_i},\\
\det(\cN_{X\vert F})\otimes\cO_{L_j}\cong\cO_{\p1}\cong\cO_F(2f)\otimes\cO_{L_j}.
\end{gather*}
Theorem \ref{tSerre} yields the existence of a vector bundle $\cF$ on $F$ with a section $s$ vanishing exactly along $X$ and with $c_1(\cF)=2f$, $c_2(\cF)=X$, because the curves $C_i$ and $L_j$ have been chosen pairwise disjoint and $h^2\big(F,\cO_F(-2f)\big)=0$. Thus $\cE:=\cF(-f)$ is a vector bundle fitting into an exact sequence of the form
\begin{equation}
\label{seqE}
0\longrightarrow\cO_F(-f)\longrightarrow\cE\longrightarrow\cI_{X\vert F}(f)\longrightarrow0.
\end{equation}
Since $h^1\big(F,\cO_F(-2f)\big)=0$, then the bundle $\cE$ is uniquely determined by the scheme $X$.
\end{construction}

We are finally ready to prove Theorem \ref{tInstanton} stated in the introduction.

\medbreak
\noindent{\it Proof of Theorem \ref{tInstanton}.}
By construction $c_1(\cE)=0$, $c_2(\cE)=\alpha\xi^2+\beta f^2$. 

Since the image via $\sigma$ of an element in $\vert f\vert$, i.e. of a fibre of $\pi$ is a plane through the blown up point $P$, if $\alpha\ge1$ then $h^0\big(F,\cI_{X\vert F}(f)\big)=0$. If $\alpha=0$, then the restriction $2\alpha+\beta\ge2$ forces the existence of $\beta+1\ge3$ general curves in $\Lambda_\pi$. Since such curves are general, we deduce that their images via $\pi$ do not lie on the same line, hence again $h^0\big(F,\cI_{X\vert F}(f)\big)=0$. In both the cases $h^0\big(F,\cE\big)=0$ and the cohomology of Sequence \eqref{seqE} implies $h^1\big(F,\cE(-h)\big)= h^1\big(F,\cI_{X\vert F}(-\xi)\big)$. 

For each connected component $Y\cong\p1$ of $X$ we have $-\xi Y=-1$: it follows that $h^0\big(X,\cO_F(-\xi)\otimes\cO_X\big)=0$. The cohomology of the exact sequence
\begin{equation}
\label{seqIdeal}
0\longrightarrow\cI_{X\vert F}\longrightarrow\cO_F\longrightarrow\cO_X\longrightarrow0
\end{equation}
tensored by $\cO_F(-\xi)$ then yields $h^1\big(F,\cI_{X\vert F}(-\xi)\big)\le h^1\big(F,\cO_F(-\xi)\big)=0$, hence $h^1\big(F,\cE(-h)\big)=0$.

We will now show that $\cE$ is $\mu$--stable. To this purpose we will make use of Lemma \ref{lHoppe}, proving that  if $4a+3b=\mu(\cO_F(a\xi+bf))\ge\mu(\cE)=0$, i.e.
\begin{equation}
\label{AB}
b\ge-\frac43a
\end{equation}
then the cohomology of Sequence \eqref{seqE} tensored by $\cO_F(-a\xi-b f)$, i.e.
$$
0\longrightarrow\cO_F(-a\xi-(b+1)f)\longrightarrow\cE(-a\xi-bf)\longrightarrow\cI_{X\vert F}(-a\xi-(b-1)f)\longrightarrow0,
$$
returns $h^0\big(F,\cE(-a\xi-bf)\big)=0$. If $a\ge1$ such a vanishing is trivial. If $a=0$, then Inequality \eqref{AB} implies $b\ge0$, and again $h^0\big(F,\cE(-a\xi-b f)\big)=0$ trivially for $b\ge2$. The same vanishing holds also for $0\le b\le1$, because $X$ contains at least two disjoint components. 

We restrict our attention to the case $a\le-1$. In this case Inequality \eqref{AB} implies
$$
-a-(b+1)\le-1+\frac13a\le-1,
$$
hence $h^0\big(F,\cO_F(-a\xi-b f)\big)=0$. Inequality  \eqref{AB} yields $-a-b\le a/3$. If $a\le -4$ this implies $-a-b\le-2$ hence $-a-(b-1)\le-1$. The same is true if $-3\le a\le -1$ and $b\ge2-a$. Thus
$$
h^0\big(F,\cI_{X\vert F}(-a\xi-(b-1)f)\big)\le h^0\big(F,\cO_F(-a\xi-(b-1)f)\big)=0.
$$
It remains to deal with the case $-3\le a\le -1$ and $b=1-a$: in this case
$$
h^0\big(F,\cI_{X\vert F}(-a\xi-(b-1)f)\big)=h^0\big(F,\cI_{X\vert F}(-a(\xi-f))\big).
$$
The support of each divisor in $\vert \lambda(\xi-f)\vert$ with $\lambda\ge1$ is $E$, hence for a general choice of $X$ the dimension on the right is zero.

We prove that $\cE$ is earnest: thanks to Corollary \ref{cSimplify} it suffices to check that $h^1\big(F,\cE(-2\xi)\big)=0$. The cohomology of Sequence \eqref{seqE} tensored by $\cO_F(-2\xi)$ yields the equality
$$
h^1\big(F,\cE(-2\xi)\big)=h^1\big(F,\cI_{X\vert F}(-2\xi+f)\big).
$$
We now show that $h^1\big(F,\cI_{X\vert F}(-2\xi+f)\big)=0$. 

To this purpose it suffices to consider the cohomology of Sequence \eqref{seqIdeal} tensored by $\cO_F(-2\xi+f)$ and to prove that the induced map
$$
\psi\colon H^1\big(F,\cO_F(-2\xi+f)\big)\to H^1\big(X,\cO_X\otimes\cO_F(-2\xi+f)\big)
$$
is injective. Since $h^1\big(F,\cO_F(-2\xi+f)\big)=1$ thanks to Proposition \ref{pLineBundle}, it follows that it suffices to check that $\psi$ is non--zero, i.e. $\ker(\psi)=0$. 

Let $X_0:=\bigcup_{j=1}^{\beta+1}L_j$. Taking into account that
\begin{gather*}
\cO_F(-2\xi+f)\otimes\cO_{C_i}\cong\cO_{\p1}(-1),\\
\cO_F(-2\xi+f)\otimes\cO_{L_j}\cong\cO_{\p1}(-2),
\end{gather*}
we deduce that $\psi$ coincides with the map induced in cohomology by the restriction
$\cO_F(-2\xi+f)\to\cO_{X_0}\otimes\cO_F(-2\xi+f)$, hence $\ker(\psi)\subseteq H^1\big(F,\cI_{X_0\vert F}(-2\xi+f)\big)$.

We have $X_0=\pi^{-1}(Z)$ where $Z\subseteq\p2$ is a general $0$--dimensional subscheme of degree $\beta+1$. Being $0$--dimensional, $Z$ is arithmetically Cohen--Macaulay, hence there is an exact sequence
$$
0\longrightarrow\bigoplus_j\cO_{\p2}(-t_j)\longrightarrow\bigoplus_i\cO_{\p2}(-s_j)\longrightarrow\cI_{Z\vert \p2}\longrightarrow0
$$
where $s_i,t_j$ are positive integers. The flatness of $\pi$ yields the existence of an exact sequence 
$$
0\longrightarrow\bigoplus_j\cO_{F}(-2\xi+(1-t_j)f)\longrightarrow\bigoplus_i\cO_{F}(-2\xi+(1-s_i)f)\longrightarrow\cI_{X_0\vert F}(-2\xi+f)\longrightarrow0.
$$
We deduce that $h^1\big(F,\cI_{X_0\vert F}(-2\xi+f)\big)$ thanks to Proposition \ref{pLineBundle}, because $1-s_i\le0$, hence $\ker(\psi)=0$. Thus the claimed vanishing $h^1\big(F,\cI_{X\vert F}(-2\xi+f)\big)=0$ holds true.

We prove that $\cE$ is generically trivial. Since $\cE$ is earnest, it follows from Lemma \ref{lEarnest} that it is generically trivial on $\Lambda_E$. Thus we have to show that it is generically trivial on $\Lambda_\pi$. If $M\in\Lambda_\pi$ is general, then $Mf=0$ and $M\cap X=\emptyset$, hence Sequence \eqref{seqE} tensored by $\cO_M\cong\cO_{\p1}$ becomes
$$
0\longrightarrow\cO_{\p1}\longrightarrow\cE\otimes\cO_M\longrightarrow\cO_{\p1}\longrightarrow0,
$$
hence $\cE\otimes\cO_M\cong\cO_{\p1}^{\oplus2}$.

We finally prove the assertion on the dimensions of the $\Ext$ groups. Since $\cE$ is $\mu$--stable, then it is simple, hence $\Ext^3_{F}\big(\cE,\cE\big)=0$ thanks to Lemma \ref{lExt3}. We will show below that $\Ext^2_{F}\big(\cE,\cE\big)\cong H^2\big(F,\cE\otimes\cE^\vee\big)=0$, hence 
$$
\dim\Ext^1_{F}\big(\cE,\cE\big)=8\alpha+4\beta-3,
$$
thanks to Lemma \ref{lExt12}. To this purpose, the cohomology of Sequence \eqref{seqE} tensored by $\cE^\vee\cong\cE$ returns
$$
H^2\big(F,\cE(-f)\big)\longrightarrow H^2\big(F,\cE\otimes\cE^\vee\big)\longrightarrow H^2\big(F,\cE\otimes\cI_{X\vert F}(f)\big),
$$
hence it suffices to check that $h^2\big(F,\cE(-f)\big)=h^2\big(F,\cE\otimes\cI_{X\vert F}(f)\big)=0$.

We first check that $h^2\big(F,\cE(-f)\big)=0$. Indeed, thanks to Proposition \ref{pLineBundle} the cohomologies of Sequences \eqref{seqIdeal} and \eqref{seqE} tensored by $\cO_F(-f)$ return
$$
h^2\big(F,\cE(-f)\big)\le h^2\big(F,\cI_{X\vert F}\big)\le h^1\big(F,\cO_X\big).
$$
The dimension on the right is zero, because $X$ is the disjoint union of smooth rational curves. 

Finally we check that $h^2\big(F,\cE\otimes\cI_{X\vert F}(f)\big)=0$. The cohomology of Sequence \eqref{seqE} tensored by $\cO_F(f)$ implies $h^2\big(F,\cE(f)\big)\le h^2\big(F,\cI_{X\vert F}(2f)\big)$. The cohomology of Sequence \eqref{seqIdeal} tensored by $\cO_F(2f)$ then yields
$$
h^2\big(F,\cI_{X\vert F}(2f)\big)\le h^1\big(X,\cO_F(2f)\otimes\cO_X\big).
$$
Since $\cO_F(2f)$ restricts to each component of $X$ to a line bundle of degree $2$, it follows that the dimension on the right is zero. In particular $h^2\big(F,\cI_{X\vert F}(2f)\big)=0$, hence the cohomology of Sequence \eqref{seqE} tensored by $\cO_F(f)$ implies $h^2\big(F,\cE(f)\big)=0$. Hence the cohomology of Sequence \eqref{seqIdeal} tensored by $\cE(f)$ returns 
\begin{align*}
h^2\big(F,\cE\otimes\cI_{X\vert F}(f)\big)&\le h^1\big(X,\cE(f)\otimes\cO_X\big)=\\
&=\sum_{i=1}^{\alpha}h^1\big(X,\cE(f)\otimes\cO_{C_i}\big)+\sum_{j=1}^{\beta+1}h^1\big(X,\cE(f)\otimes\cO_{L_j}\big).
\end{align*}
Equality \eqref{Normal} and the definition of $\cE$ imply $\cE(f)\otimes\cO_X\cong\cN_{X\vert F}$. Thus 
\begin{gather*}
\cE(f)\otimes\cO_{C_i}\cong\cO_{\p1}(1)^{\oplus2},\qquad 
\cE(f)\otimes\cO_{L_j}\cong\cO_{\p1}^{\oplus2},
\end{gather*}
thanks to Remarks \ref{rLine} and \ref{rConic}, hence $h^2\big(F,\cE\otimes\cI_{X\vert F}(f)\big)=0$.
\qed
\medbreak

\section{Some comments on the irreducibility of $\cI_F(\alpha\xi^2+\beta f^2)$}
\label{sComponent}
Recall that $\cI_F(\alpha\xi^2+\beta f^2)$ denotes the locus of points representing instanton bundles inside $\cM_F(2;0,\alpha\xi^2+\beta  f^2)$. 

Theorem \ref{tInstanton} shows that the locus of points $\cI_F^{earnest}(\alpha\xi^2+\beta f^2)$ representing earnest instantons contains at least a non--empty irreducible component. In this last section we will discuss about the irreducibility of such loci.

We start by giving the following complete characterisations of earnest instanton bundles with charges $\alpha\xi^2$ and $\beta f^2$ in terms of bundles on $\p3$ and $\p2$.

\begin{proposition}
\label{pB=0}
Let $\cE$ be an earnest instanton bundle with charge $\alpha\xi^2$ on $F$. 

Then $\cE\cong\sigma^*\cU$, where $\cU$ is an instanton bundle with charge $\alpha$ on $\p3$.
\end{proposition}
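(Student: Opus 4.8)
The plan is to realise $\cU$ as the pushforward $\sigma_*\cE$ and to reduce the entire statement to the single geometric fact that $\cE$ is trivial along the exceptional divisor $E\cong\p2$. Once $\cE\otimes\cO_E\cong\cO_E^{\oplus2}$ is established, a standard descent argument along the contraction $\sigma$ produces a rank $2$ bundle $\cU$ on $\p3$ with $\cE\cong\sigma^*\cU$, and the remaining instanton properties of $\cU$ follow by transporting those of $\cE$ through $\sigma$ via the projection formula. I would therefore split the proof into three blocks: triviality on $E$, descent, and verification that $\cU$ is an instanton of charge $\alpha$.

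First I would prove $\cE\otimes\cO_E\cong\cO_E^{\oplus2}$. Since $\cE$ is earnest, Corollary \ref{cSimplify} gives $h^1\big(F,\cE(-2\xi)\big)=0$; as $E\in\vert\xi-f\vert$, $q_F=1$ and $h=\xi+f$, we have $-E-q_Fh=-2\xi$, so Lemma \ref{lEarnest} applies and shows that $\cE\otimes\cO_E$ is $\mu$--semistable on $E$. Let $i\colon E\hookrightarrow F$ denote the inclusion. Then $c_1(\cE\otimes\cO_E)=i^*c_1(\cE)=0$, and $c_2(\cE\otimes\cO_E)=i^*c_2(\cE)=\alpha(i^*\xi)^2=0$, because $i^*\xi=c_1\big(\sigma^*\cO_{\p3}(1)\vert_E\big)=0$ (as $\sigma(E)=\{P\}$). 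Here the hypothesis $\beta=0$ is essential: since $\cO_F(f)\vert_E\cong\cO_{\p2}(1)$, a nonzero $\beta$--term would contribute $i^*(\beta f^2)=\beta\ne0$. A $\mu$--semistable rank $2$ bundle on $\p2$ with $c_1=c_2=0$ cannot be $\mu$--stable (a $\mu$--stable rank $2$ bundle on $\p2$ with $c_1=0$ has $c_2>0$, as a short Riemann--Roch computation shows); hence it is strictly semistable and fits into $0\to\cO_E\to\cE\otimes\cO_E\to\cI_{W\vert E}\to0$ with $\mathrm{length}(W)=c_2=0$, i.e.\ $W=\emptyset$. As $\Ext^1_E\big(\cO_E,\cO_E\big)=H^1\big(\p2,\cO_{\p2}\big)=0$, the extension splits and $\cE\otimes\cO_E\cong\cO_E^{\oplus2}$.

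The main obstacle is the descent step, i.e.\ deducing $\cE\cong\sigma^*\cU$ from triviality on $E$. I would argue through the infinitesimal neighbourhoods $E_k$ of $E$: the obstruction to lifting a trivialisation of $\cE$ from $E_k$ to $E_{k+1}$ lies in $H^1\big(E,\cO_E^{\oplus4}\otimes\cN^{\otimes(k+1)}\big)$, where $\cN:=\cN_{E\vert F}^\vee\cong\cO_{\p2}(1)$ is the conormal bundle (the factor $\cO_E^{\oplus4}$ being the endomorphism sheaf of $\cE\otimes\cO_E\cong\cO_E^{\oplus2}$). This group equals $H^1\big(\p2,\cO_{\p2}(k+1)^{\oplus4}\big)=0$ for every $k\ge0$. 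Thus $\cE$ is trivial on every $E_k$, compatibly, and the theorem on formal functions shows that $\cU:=\sigma_*\cE$ is locally free of rank $2$ (it already is away from $P$) with free completed stalk at $P$, while the counit $\sigma^*\sigma_*\cE\to\cE$ is an isomorphism, being so away from $E$ and on the formal neighbourhood of $E$. This is the standard blow--down criterion for bundles and is the only genuinely nontrivial point of the argument.

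Finally I would check that $\cU$ is an instanton of charge $\alpha$ on $\p3$. From $\sigma^*c_1(\cU)=c_1(\cE)=0$ and the injectivity of $\sigma^*$ on $A^1$ we get $c_1(\cU)=0$, and from $\sigma^*c_2(\cU)=c_2(\cE)=\alpha\xi^2=\sigma^*(\alpha H^2)$ (where $\xi=\sigma^*H$) we get $c_2(\cU)=\alpha$. Writing $\cE(j\xi)\cong\sigma^*\big(\cU(j)\big)$ and using $R\sigma_*\cO_F\cong\cO_{\p3}$, the projection formula gives $H^i\big(\p3,\cU(j)\big)\cong H^i\big(F,\cE(j\xi)\big)$ for all $i,j$; in particular $h^0\big(\p3,\cU\big)=h^0\big(F,\cE\big)=0$ and, by earnestness, $h^1\big(\p3,\cU(-2)\big)=h^1\big(F,\cE(-2\xi)\big)=0$, which is precisely the instantonic condition on $\p3$ (where $q_{\p3}=2$). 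Since $c_1(\cU)=0$ and $h^0\big(\p3,\cU\big)=0$, the Hoppe criterion recalled at the beginning of Section \ref{sInstanton} yields that $\cU$ is $\mu$--stable. Finally Corollary \ref{cChargeBlow} gives $2\alpha+\beta\ge2$, hence $\alpha\ge1$, so $\cU$ is a genuine instanton bundle of charge $\alpha$, completing the proof.
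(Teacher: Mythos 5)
Your proof is correct, and its overall skeleton (trivialise $\cE$ on $E$, descend along $\sigma$, transport the instanton conditions through $R\sigma_*\cO_F\cong\cO_{\p3}$) matches the paper's; but both key steps are argued by genuinely different means. For triviality on $E$, the paper never invokes Lemma \ref{lEarnest}: it shows that \emph{all} cohomology of $\cE(-2\xi)$ vanishes --- here $\beta=0$ enters through Riemann--Roch, since $\chi(\cE(-2\xi))=\beta$ by Equality \eqref{RRBlowUp} --- and then deduces from the restriction sequence that $\cE\otimes\cO_E$ is $0$--regular, hence globally generated, hence trivial because $c_1=0$. You instead get $\mu$--semistability of $\cE\otimes\cO_E$ from earnestness via Lemma \ref{lEarnest}, observe that $c_2$ restricts to $0$ on $E$ (which makes the role of $\beta=0$ geometrically transparent, arguably more so than in the paper), and conclude by the elementary classification of $\mu$--semistable rank $2$ bundles on $\p2$ with $c_1=c_2=0$; your Riemann--Roch exclusion of the $\mu$--stable case and the splitting of the strictly semistable extension are both correct. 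For the descent step the paper simply cites Schwarzenberger \cite[Theorem 5]{Sch}, whereas you reprove that blow--down criterion by hand through the infinitesimal neighbourhoods $E_k$ and the theorem on formal functions; this is the standard proof of the cited result, so nothing is lost (the indexing of the obstruction twist, $\cN^{\otimes(k+1)}$ versus $S^k\cN$, is immaterial here since $H^1\big(\p2,\cO_{\p2}(m)^{\oplus4}\big)=0$ for every $m\ge0$), though in the context of the paper the citation would suffice. The final verification --- Chern classes via injectivity of $\sigma^*$, the isomorphisms $H^i\big(\p3,\cU(j)\big)\cong H^i\big(F,\cE(j\xi)\big)$, $\mu$--stability of $\cU$ via the Hoppe criterion, and $\alpha\ge1$ from Corollary \ref{cChargeBlow} --- is essentially identical to the paper's, with your explicit stability check being slightly more careful than the paper's implicit one.
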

\begin{proof}
Recall that an instanton bundle $\cE$ is earnest if and only if $h^1\big(F,\cE(-2\xi)\big)=0$ thanks to Corollary \ref{cSimplify}.

Both $\cE(-2\xi)$ and $\cE(-2f)$ are subbundles of $\cE$, hence
$$
h^0\big(F,\cE(-2\xi)\big)=0,\qquad h^3\big(F,\cE(-2\xi)\big)=h^0\big(F,\cE(-2f)\big)=0.
$$ 
Equalities \eqref{Exotic} and \eqref{RRBlowUp} imply $\chi(\cE(-2\xi))=0$, hence $h^2\big(F,\cE(-2\xi)\big)=0$. Thus the cohomology of the exact sequence
$$
0\longrightarrow \cO_F(f-\xi)\longrightarrow \cO_F\longrightarrow \cO_E\longrightarrow0
$$
tensored by $\cE(-h)$ and Equalities \eqref{VanishingQ}, \eqref{Serre} yield
$$
h^1\big(E,\cE(-h)\otimes\cO_E\big)=h^2\big(E,\cE(-2h)\otimes\cO_E\big)=0,
$$
i.e. $\cE\otimes\cO_E$ is $0$--regular, hence globally generated. Since $c_1(\cE\otimes\cO_E)=0$, it follows that $\cE\otimes\cO_E\cong\cO_E^{\oplus2}$, hence $\cU:=\sigma_*\cE$ is a vector bundle of rank $2$ on $\p3$ such that $\sigma^*\cU\cong\cE$ (see \cite[Theorem 5]{Sch}).

In particular $c_1(\cU)=0$ and $c_2(\cU)=\alpha$ necessarily. Since $\sigma_*\cO_F\cong\cO_{\p3}$ and $R^i\sigma_*\cO_F=0$ for $i\ge1$, it follows from \cite[Exercises III.8.1 and III.8.3]{Ha2} that 
$h^0\big(\p3,\cU\big)=h^0\big(F,\cE\big)=0$. The same argument also implies $h^1\big(\p3,\cU(-2)\big)=h^1\big(F,\cE(-2\xi)\big)=0$, because  $\sigma^*(\cU(-2))=\cE(-2\xi)$. We deduce that $\cU$ is an instanton bundle on $\p3$.
\end{proof}

The following proposition gives a complete characterisation of instanton bundles with charge $\beta f^2$, because all such bundles are automatically earnest thanks to Theorem \ref{tSimplify} and Corollary \ref{cSimplify}.

\begin{proposition}
\label{pA=0}
Let $\cE$ be an instanton bundle  with charge $\beta f^2$ on $F$.

Then $\cE\cong\pi^*\cV$, where $\cV$ is a $\mu$--stable bundle on $\p2$ with $c_1(\cV)=0$ and $c_2(\cV)=\beta$.
\end{proposition}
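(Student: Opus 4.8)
The plan is to read off the monad of Theorem \ref{tSimplify} in the special case $\alpha=0$ and observe that every one of its terms, and then every one of its differentials, comes from $\p2$. First I would note that $\gamma:=h^1\big(F,\cE(-2\xi)\big)$ is a dimension bounded by $\alpha=0$, so $\gamma=0$ (equivalently $\cE$ is automatically earnest by Corollary \ref{cSimplify}, though this is not needed below). Substituting $\alpha=\gamma=0$ into the definitions of $\cC^{-1},\cC^0,\cC^1$, Theorem \ref{tSimplify} exhibits $\cE$ as the cohomology of
\[
0\longrightarrow\cO_F(-f)^{\oplus\beta}\longrightarrow\pi^*\Omega^1_{\p2}(f)^{\oplus\beta}\longrightarrow\cO_F^{\oplus\beta-2}\longrightarrow0,
\]
where $\beta\ge2$ by Corollary \ref{cChargeBlow}. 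Since $\cO_F(-f)=\pi^*\cO_{\p2}(-1)$, $\pi^*\Omega^1_{\p2}(f)=\pi^*\big(\Omega^1_{\p2}(1)\big)$ and $\cO_F=\pi^*\cO_{\p2}$, all three terms are pull-backs of locally free sheaves on $\p2$.

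Next I would descend the two differentials. Because $\pi_*\cO_F\cong\cO_{\p2}$ and $R^{>0}\pi_*\cO_F=0$, the projection formula yields $\Hom_F(\pi^*A,\pi^*B)\cong\Hom_{\p2}(A,B)$ for locally free $A,B$ on $\p2$; in particular $\pi^*$ is a faithful bijection on these $\Hom$-groups. Hence the two maps of the monad are $\pi^*$ of unique morphisms $\delta^{-1},\delta^0$ on $\p2$. The monad relation $\delta^0\delta^{-1}=0$ follows from $\pi^*(\delta^0\delta^{-1})=0$ by faithfulness, and since $\pi$ is flat and surjective, the injectivity of $\delta^{-1}$ with locally free cokernel and the surjectivity of $\delta^0$ can be verified fibrewise from the (already known) corresponding properties of their pull-backs. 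Thus $\delta^\bullet$ is a monad on $\p2$; writing $\cV$ for its cohomology and using the exactness of $\pi^*$ (flatness of $\pi$), I obtain $\cE\cong\pi^*\cV$.

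It then remains to compute the invariants of $\cV$ and prove stability. A rank count gives $\rk(\cV)=2\beta-\beta-(\beta-2)=2$. From $\pi^*c_1(\cV)=c_1(\cE)=0$ and the injectivity of $\pi^*\colon\Pic(\p2)\to\Pic(F)$ I deduce $c_1(\cV)=0$, while $\pi^*c_2(\cV)=c_2(\cE)=\beta f^2$ together with $\pi^*[\mathrm{pt}]=f^2$ forces $c_2(\cV)=\beta$. For stability I would use $H^0\big(\p2,\cV\big)\cong H^0\big(F,\pi^*\cV\big)=H^0\big(F,\cE\big)=0$ (projection formula again): for a rank $2$ bundle on $\p2$ with $c_1=0$, the vanishing $H^0(\cV)=0$ excludes every sub-line-bundle $\cO_{\p2}(a)$ with $a\ge0$, so $\cV$ is $\mu$-stable by the Hoppe criterion (cf. \cite{Hop,J--M--P--S}).

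The step I expect to be the main obstacle is the descent of the differentials: one must argue carefully that the morphisms between these pull-back bundles, and the exactness conditions making them a monad, genuinely originate on $\p2$. The $\Hom$-identification coming from $\pi_*\cO_F\cong\cO_{\p2}$ settles the existence and uniqueness of $\delta^{-1},\delta^0$, and the flatness and surjectivity of $\pi$ settle the monad conditions by a fibrewise check; the remaining identification of Chern classes and the stability argument are then routine.
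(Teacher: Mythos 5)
Your proof is correct and follows essentially the same route as the paper: the paper likewise specializes the monad of Theorem \ref{tSimplify} to $\alpha=\gamma=0$, observes that flatness of $\pi$ and $\cO_F(f)\cong\pi^*\cO_{\p2}(1)$ make it the pull--back of a monad on $\p2$, defines $\cV$ as its cohomology, and checks the invariants and $\mu$--stability. The only difference is one of detail: you spell out the descent of the differentials via $\pi_*\cO_F\cong\cO_{\p2}$ and the fibrewise exactness checks, which the paper compresses into the remark that the monad \lq\lq is the pull--back of a monad on $\p2$\rq\rq.
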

\begin{proof}
Thanks to Theorem \ref{tSimplify} the instanton bundle $\cE$ is the cohomology of a monad
$$
0\longrightarrow\cO_F(-f)^{\oplus\beta}\longrightarrow\pi^*\Omega_{\p2}^1(f)^{\oplus\beta}\longrightarrow\cO_F^{\oplus\beta-2}\longrightarrow0.
$$
Since $\cO_F(f)\cong\pi^*\cO_{\p2}(1)$ and the map $\pi$ is flat, such a monad is the pull--back of a monad on $\p2$ of the form
$$
0\longrightarrow\cO_{\p2}(-1)^{\oplus\beta}\longrightarrow\Omega_{\p2}^1(1)^{\oplus\beta}\longrightarrow\cO_{\p2}^{\oplus\beta-2}\longrightarrow0.
$$
Thus, if $\cV$ is the cohomology of the latter, then $\cE\cong\pi^*\cV$. In particular $\cV$ is necessarily locally free.

Using the above monad, it is not difficult to check that $\cV$ satisfies the conditions in the statement.
\end{proof}

Recall that $\cM_{\p2}(2;0,\beta)$ is integral, rational and smooth (see \cite{Ba}: see also \cite{Mar2,Ma,Ka}). It then follows from Proposition \ref{pA=0}, that the locus
$$
\cI_F^{earnest}(\beta f^2)=\cI_F(\beta f^2)\subseteq\cM_F(2;0,\beta f^2)
$$
of earnest instanton bundles is integral, rational and smooth as well. 

When $\beta=0$ we do not know whether $\cI_F(\alpha\xi^2)$ is still irreducible: anyhow we deduce from \cite{J--V,Tik1,Tik2} that  the locus $\cI_F^{earnest}(\alpha\xi^2)$ is irreducible and smooth thanks to Proposition \ref{pB=0}.

In general, we are not even able to prove the irreducibility of $\cI_F^{earnest}(\alpha\xi^2+\beta f^2)$. Nevertheless, we now  prove Theorem \ref{tComponent}, i.e. that Construction \ref{conInstanton} always leads to bundles in the same component.

\medbreak
\noindent{\it Proof of Theorem \ref{tComponent}.}
The schemes as in Equality \eqref{XInstanton} represent points in a non--empty open subset $\mathcal H\subseteq\Gamma^{\times\alpha}\times \Lambda_\pi^{\times\beta+1}$. Since the latter product is irreducible, it follows that the same holds for $\mathcal H$. 

Since the  bundle $\cE$ in Sequence \eqref{seqE} is uniquely determined by the scheme $X$, we obtain in this way a flat family of bundles  containing all the bundles obtained via Construction \ref{conInstanton} and parameterized by $\mathcal H$. Thus we deduce the existence of a morphism $u\colon \mathcal H\to \cI_F(\alpha\xi^2+\beta  f^2)$. Every point in $u(\mathcal H)$ is smooth because $\Ext^2_{F}\big(\cE,\cE\big)=0$ (see Theorem \ref{tInstanton}), thus there is a unique component $\cI_F^{0}(\alpha\xi^2+\beta  f^2)$ containing $u(\mathcal H)$: Theorem \ref{tInstanton} then implies
$$
\dim\cI_F^{0}(\alpha\xi^2+\beta  f^2)=\dim \Ext^1_{F}\big(\cE,\cE\big)=8\alpha+4\beta-3.
$$
This last equality completes the proof of the statement.
\qed
\medbreak

Thanks to Theorem \ref{tInstanton}, we know that each bundle in $\cI_F^{0}(\alpha\xi^2+\beta  f^2)$ is earnest, then $\cI_F^{0}(\alpha\xi^2+\beta  f^2)\subseteq \cI_F^{earnest}(\alpha\xi^2+\beta  f^2)$. The above discussion shows that equality holds when $\alpha$ or $\beta$ vanishes.

\bigskip
\noindent
Gianfranco Casnati,\\
Dipartimento di Scienze Matematiche, Politecnico di Torino,\\
c.so Duca degli Abruzzi 24,\\
10129 Torino, Italy\\
e-mail: {\tt gianfranco.casnati@polito.it}

\bigskip
\noindent
Emre Coskun,\\
Department of Mathematics, Middle East Technical University,\\
06800, Ankara, Turkey\\
e-mail: {\tt emcoskun@metu.edu.tr}

\bigskip
\noindent
Ozhan Genc,\\
Dipartimento di Scienze Matematiche, Politecnico di Torino,\\
c.so Duca degli Abruzzi 24,\\
10129 Torino, Italy\\
e-mail: {\tt ozhangenc@gmail.com}

\bigskip
\noindent
Francesco Malaspina,\\
Dipartimento di Scienze Matematiche, Politecnico di Torino,\\
c.so Duca degli Abruzzi 24,\\
10129 Torino, Italy\\
e-mail: {\tt francesco.malaspina@polito.it}


\begin{thebibliography}{44}

\bibitem{A--O}
V. Ancona, G. Ottaviani: {\em Canonical resolutions of sheaves on Schubert and Brieskorn varieties}. In \lq Complex analysis\rq\ (Wuppertal, 1991), K. Diederich ed., Aspects Math., E17, Friedr. Vieweg, Braunschweig, (1991), 14--19.

\bibitem{An}
 V. Antonelli, F. Malaspina: Instanton bundles on the Segre threefold with Picard number three. To appear in Math. Nachr..

\bibitem{Ar}
  E. Arrondo: \emph{A home--made Hartshorne--Serre correspondence}. Comm. Algebra  \textbf{ 20} \rm (2007),  423--443.

\bibitem{A--C}
  E. Arrondo, L. Costa: \emph{Vector bundles on Fano $3$--folds without intermediate cohomology}. Comm. Algebra  \textbf{ 28} \rm (2000),  3899--3911.

\bibitem{Ba}
W. Barth: \emph{Moduli of vector bundles on the projective plane}. Invent. Math. \textbf{42} \rm(1977), 63--91.

\bibitem{Buch}
N.P. Buchdahl: \emph{Instantons on $n\mathbf{CP}_2$}. J. Differential Geom. \textbf{37} (1993), 669--687.
 
\bibitem{C--G--H--S}
M. Casanellas, R. Hartshorne, F. Geiss, F.O. Schreyer: \emph{Stable
  Ulrich bundles}. Int. J. of Math. \textbf{23} 1250083 \rm(2012). 

\bibitem{Cs1}
G. Casnati: \emph{On rank two aCM bundles}.  Comm. Algebra \textbf{45} (2017), 4139--4157.

\bibitem{C--F--M1}
G. Casnati, D.Faenzi, F. Malaspina: \emph{ Rank two aCM bundles on the del Pezzo threefold with Picard number $3$}. J. Algebra \textbf{429} (2015), 413--446.

\bibitem{C--F--M2}
G. Casnati, D. Faenzi, F. Malaspina: \emph{Rank two aCM bundles on the del Pezzo fourfold of degree $6$ and its general hyperplane section}. J. Pure Appl. Algebra \textbf{222} (2018), 585--609.

\bibitem{C--F--M3}
G. Casnati, M. Filip, F. Malaspina: \emph{Rank two aCM bundles on the del Pezzo threefold
of degree $7$}. Rev Mat Complut \textbf{30} (2017), 129--165.

\bibitem{C--F}
I. Coand\u a, D. Faenzi: {\em A refined stable restriction theorem for vector bundles on
quadric threefolds}. Ann. Math. Pura Appl. \textbf{(4)} (2012), 1--29.

\bibitem{Don}
S.K. Donaldson: \emph{Vector bundles on the flag manifold and the Ward correspondence}. in \lq Geometry today (Rome, 1984)\rq, Progr. Math., \textbf{60}, Birkhäuser (1985), 109--119.
 
\bibitem{E--S--W} 
D. Eisenbud, F.-O. Schreyer, J. Weyman: \emph{Resultants and Chow forms via exterior syzygies}.
  J. Amer. Math. Soc. \textbf{16} \rm (2003),  537--579.

\bibitem{E--He}
D. Eisenbud, D. Herzog: \emph{The classification of homogeneous Cohen--Macaulay rings of finite representation type}. Math. Ann. \textbf{280} \rm(1988), 347--352.

\bibitem{Fa}
D. Faenzi: \emph{Even and odd instanton bundles on Fano threefolds of Picard number one}. Manuscripta Mth. \textbf{144} (2014), 199--239.

 
\bibitem{Ha2}
R. Hartshorne: {\em Algebraic geometry}. G.T.M. 52, Springer \rm (1977).

\bibitem{Ha3}
R. Hartshorne: {\em Coherent functors}. Adv. Math. \textbf{140} (1998), 44--94.

\bibitem{Hop}
H. Hoppe: {\em Generischer Spaltungstyp und zweite Chernklasse stabiler Vektorraumb\"undel vom Rang $4$ auf $\p4$}. Math. Z. \textbf{187} (1984), 345--360.

\bibitem{I--P}
  V.A. Iskovskikh, Yu.G. Prokhorov: \emph{Fano varieties}. Algebraic Geometry V (A.N. Parshin and I.R. Shafarevich eds.), Encyclopedia of Mathematical Sciences  47, Springer, \rm (1999).

\bibitem{J--M--P--S}
M.B. Jardim, G. Menet, D.M. Prata, H.N. S\'a Earp: {\em Holomorphic bundles for higher dimensional gauge theory}. arXiv:1109.2750 [math.AG].

\bibitem{J--V}
M. Jardim, M. Verbitsky: {\em Trihyperk\"ahler reduction and instanton bundles on $\bC\p3$}. Compos. Math. \textbf{150} (2014), 1836--1868.

\bibitem{Ka}
P. I. Katsylo: \emph{Birational geometry of moduli varieties of vector bundles over $\p2$}. (Russian) Izv. Akad. Nauk SSSR Ser. Mat. \textbf{55} \rm(1991), 429--438; translation in Math. USSR-Izv. \textbf{38} \rm(1992), 419--428.

\bibitem{Kuz}
A. Kuznetsov: {\em Instanton bundles on Fano threefolds}. Cent. Eur. J. Math. \textbf{10} (2012), 1198--1231.

\bibitem{Laz2}
R. Lazarsfeld: \emph{Positivity in algebraic geometry. II. Positivity for Vector Bundles, and Multiplier Ideals}. A Series of Modern Surveys in Mathematics, 49. Springer \rm (2004).

\bibitem{M--M--PL}
F. Malaspina, S. Marchesi, J. Pons--Llopis: {\em Instanton bundles on the flag variety $F(0,1,2)$}. arXiv:1706.06353 [math.AG],  to appear on Ann. Sc. Norm. Pisa. 

\bibitem{Ma}
T. Maeda: \emph{An elementary proof of the rationality
of the moduli space for rank $2$ vector bundles on $P^2$}. Hiroshima Math. J. \textbf{20} \rm(1990), 103--107

\bibitem{Mar2}
M. Maruyama: \emph{The rationality of the moduli spaces of vector bundles of rank $2$ on $\p2$}. 
With an appendix by Isao Naruki. Adv. Stud. Pure Math., 10, Algebraic geometry, Sendai, 1985, 399--414, North-Holland (1987).

\bibitem{Ma}
M. Maruyama: {\em Boundedness of semistable sheaves of small ranks}. Nagoya Math. J. \textbf{78} (1980), 65--94.

\bibitem{O--S--S}
  C. Okonek, M. Schneider, H. Spindler: {\em Vector bundles on complex projective spaces}. Progress in Mathematics 3, \rm(1980).

\bibitem{O--S}
  C. Okonek, H. Spindler: {\em Mathematical instanton bundles on $\p{2n+1}$}.  J. Reine Angew. Math. \textbf{364} (1986), 35--50.



\bibitem{Orl}
D.O. Orlov: {\em Projective bundles, monoidal transformations, and derived categories of coherent sheaves}. (Russian) Izv. Ross. Akad. Nauk Ser. Mat. \textbf{56} (1992), 852--862; translation in Russian Acad. Sci. Izv. Math. \textbf{41} (1993), 133--141.

\bibitem{Sa}
G. Sanna: {\em Small Charge Instantons and Jumping Lines on the Quintic del Pezzo Threefold}. Int. Math. Res. Notices  \textbf{21} (2017), 6523--6583.

\bibitem{Sch}
R. L. E. Schwarzenberger: {\em Vector bundles on algebraic surfaces}. Proc. London Math. Soc.  \textbf{11} (1961), 601--622. 
  
\bibitem{Tik1}
A. S. Tikhomirov: {\em Moduli of mathematical instanton vector bundles with odd $c_2$ on projective space}. (Russian) Izv. Ross. Akad. Nauk Ser. Mat. \textbf{76} (2012), 143--224; translation in Izv. Math. \textbf{76} (2012), 991--1073.

\bibitem{Tik2}
A.S. Tikhomirov: {\em Moduli of mathematical instanton vector bundles with even $c_2$ on projective space}. (Russian) Izv. Ross. Akad. Nauk Ser. Mat. \textbf{77} (2013), 139--168; translation in Izv. Math. \textbf{77} (2013), 1195--1223.

\bibitem{Tsu}
T. Tsukioka: {\em Classification of Fano manifolds containing a negative divisor isomorphic to projective space}. Geom. Dedicata \textbf{123} (2006), 179--186.


\end{thebibliography}
\end{document}